\newsavebox{\@brx}
\newcommand{\llangle}[1][]{\savebox{\@brx}{\(\m@th{#1\langle}\)}%
  \mathopen{\copy\@brx\kern-0.5\wd\@brx\usebox{\@brx}}}
\newcommand{\rrangle}[1][]{\savebox{\@brx}{\(\m@th{#1\rangle}\)}%
  \mathclose{\copy\@brx\kern-0.5\wd\@brx\usebox{\@brx}}}
\newtheorem{thm}{Theorem}[section]
\newtheorem{ass}[thm]{Assumption}
\newtheorem{coro}[thm]{Corollary}
\newtheorem{lem}[thm]{Lemma}
\newtheorem{prop}[thm]{Proposition}
\theoremstyle{definition}
\newtheorem{eg}[thm]{Example}
\newtheorem{nota}[thm]{Notation}
\theoremstyle{remark}
\newtheorem{remk}[thm]{Remark}
\renewcommand{\phi}{\varphi}
\newcommand{\Rbb}{ {\mathbb R}}
\newcommand{\Zbb}{ {\mathbb Z}}
\newcommand{\Cbb}{ {\mathbb C}}
\newcommand{\be}{\begin{enumerate}}
\newcommand{\ee}{\end{enumerate}}
\newcommand{\Oe}{\Omega_{\epsilon}}
\newcommand{\geps}{g_{\epsilon}}
\title{Geometric inequalities between Dirichlet and Neumann eigenvalues}
\author{Lawford Hatcher}
\begin{document}

\begin{abstract}
    Comparing Neumann and Dirichlet eigenvalues of the Laplacian on a bounded domain $\Omega\subseteq\Rbb^n$ is a topic that goes back at least to the work of P\'olya \cite{polya}. We study the effect of the isoperimetric ratio of $\Omega$ on the number $N(\Omega)$ of Neumann eigenvalues that do not exceed the first Dirichlet eigenvalue, proving that $N(\Omega)$ is bounded above and below by a constant multiple of the isoperimetric ratio in the case of convex domains. We also show that these estimates do not hold in the non-convex setting, addressing questions of Cox-MacLachlan-Steeves \cite{coxetal} and Freitas \cite{freitas}. Despite these counterexamples, we find similar estimates for polygonal domains in $\Rbb^2$ as well as certain families of fiber bundles that asymptotically collapse onto their base spaces, the motivating examples being tubular neighborhoods of submanifolds. 
\end{abstract}

\maketitle

\section{Introduction}\label{intro}
Let $\Omega$ be a Lipschitz domain in $\Rbb^n$ or a compact Riemannian $n$-manifold with nonempty boundary. Denote by $0=\mu_1<\mu_2\leq \mu_3\leq \dots $ (resp. $0<\lambda_1<\lambda_2\leq \lambda_3\leq \dots $) the eigenvalues of the Laplacian on $\Omega$ with Neumann (resp. Dirichlet) boundary conditions, counted with multiplicity. Let $$N(\Omega):=\#\{j\mid \mu_j\leq \lambda_1\}.$$
\indent Motivated by results of Cox-MacLachlan-Steeves \cite{coxetal} and Freitas \cite{freitas}, the primary goal of this paper is to examine the relationship between the isoperimetric ratio of $\Omega$ and $N(\Omega)$. Inequalities involving $N(\Omega)$ date back at least to the work of P\'olya \cite{polya}, in which he proves that $N(\Omega)\geq 2$ for all bounded Lipschitz planar domains $\Omega\subseteq \Rbb^2$. This result has been extended to inequalities between higher eigenvalues, higher dimensions, and curved spaces by various authors: \cite{payne}, \cite{levinewein}, \cite{fried}, \cite{ashbaugh}, \cite{filonov}, \cite{rohleder}, \cite{franketal}, \cite{huaetal}. 

\begin{nota}
    Throughout the paper, we deal with sets of various Hausdorff dimensions, and the dimension will always be clear in context. Given a set $C\subseteq\Rbb^n$ of Hausdorff dimension $k$, we will denote the $k$-dimensional Hausdorff measure of $C$ by $|C|$. For example, $|\partial\Omega|$ denotes the $(n-1)$-dimensional Hausdorff measure of the boundary of $\Omega$ while $|\Omega|$ denotes the $n$-dimensional Hausdorff measure of $\Omega$.
\end{nota}

\subsection{Bounded convex domains} 
Our first main result is a two-sided bound on $N(\Omega)$ for bounded convex domains $\Omega\subseteq \Rbb^n$.

\begin{thm}\label{mainthmconvex}
    For each $n\geq 2$, there exist constants $C_1,C_2>0$ depending only on $n$ such that for any bounded convex domain $\Omega\subseteq \Rbb^n$, $$C_1\cdot\frac{|\partial\Omega|^n}{|\Omega|^{n-1}}\leq N(\Omega)\leq C_2\cdot \frac{|\partial\Omega|^n}{|\Omega|^{n-1}}.$$ Moreover, it suffices to take $$C_1=\omega_n\cdot 184^{-n}\cdot n^{-\frac{3}{2}n(n+1)}\;\;\text{and}\;\;C_2=92^n\cdot n^{\frac{3}{2}n^2+n},$$ where $\omega_n$ is the volume of the unit ball in $\Rbb^n$. 
\end{thm}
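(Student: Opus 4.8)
The plan is to estimate $N(\Omega)$ by comparing it against a concrete geometric quantity — essentially the ratio $\mathrm{diam}(\Omega)^{n-1}|\partial\Omega|/|\Omega|^{?}$ — and then to relate that back to the isoperimetric ratio using the structure of convex bodies. For the \emph{lower bound}, I would first recall that for convex $\Omega$ the first Dirichlet eigenvalue satisfies a two-sided control $\lambda_1(\Omega)\asymp_n \mathrm{inradius}(\Omega)^{-2}$ (Hersch-type / Li–Yau-type bounds available for convex domains). Then I would produce many Neumann eigenfunctions below $\lambda_1$ by a test-function / min-max argument: tile a long thin convex body by roughly $|\partial\Omega|^n/|\Omega|^{n-1}$ disjoint pieces of comparable ``size'' $\sim$ inradius, and on each piece build a locally supported trial function (e.g.\ an affine coordinate function, suitably cut off) whose Rayleigh quotient is $\lesssim_n \mathrm{inradius}^{-2}\lesssim_n \lambda_1$. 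Disjointness of supports gives linear independence, hence that many Neumann eigenvalues $\le \lambda_1$. The combinatorial heart here is a covering/packing lemma for convex bodies: the number of inradius-sized balls needed to cover $\Omega$ is comparable to $|\partial\Omega|^n/|\Omega|^{n-1}$ — this follows from the fact that a convex body is, up to dimensional constants, comparable to a box with one short side $\sim$ inradius, so its volume is $\sim \mathrm{inradius}\cdot(\text{something})$ and its boundary area is dominated by the large faces.

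For the \emph{upper bound}, the natural tool is the Dirichlet–Neumann bracketing / Courant-type counting estimate: $N(\Omega)$ is bounded by the number of eigenvalues of a Neumann (or direct-sum of Neumann) problem on a covering of $\Omega$ below $\lambda_1(\Omega)$. Cover $\Omega$ by $\sim |\partial\Omega|^n/|\Omega|^{n-1}$ convex pieces each of diameter $\lesssim_n \mathrm{inradius}(\Omega)$; on each such small convex piece $P$, the Neumann counting function satisfies $\#\{\mu_j(P)\le \lambda_1(\Omega)\}\le \#\{\mu_j(P)\le C_n \,\mathrm{diam}(P)^{-2}\}$, and for a convex body this is bounded by a dimensional constant (a Weyl-type / Kröger-type bound: a convex body has only $O_n(1)$ Neumann eigenvalues below $C_n/\mathrm{diam}^2$). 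Summing over the pieces and using Neumann bracketing ($\mu_j(\bigsqcup P_i)\le \mu_j(\Omega)$ in the appropriate sense, so eigenvalue counts add up) gives $N(\Omega)\le C_2\,|\partial\Omega|^n/|\Omega|^{n-1}$.

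I expect the main obstacle to be twofold. First, getting the \emph{explicit} constants advertised in the statement ($184^{-n} n^{-\frac32 n(n+1)}$ etc.) requires tracking every dimensional constant through the inradius comparisons, the covering lemma, and the Weyl-type eigenvalue bounds on convex pieces; the qualitative argument is standard but the bookkeeping is delicate, and in particular one must use a covering by axis-aligned (or John-ellipsoid-adapted) boxes with carefully chosen side lengths to keep overlaps and aspect ratios under explicit control. Second, the Neumann-side counting estimate on a single convex piece — ``$O_n(1)$ Neumann eigenvalues below $C_n/\mathrm{diam}^2$'' — needs a clean quantitative proof valid for \emph{all} convex shapes (not just nice ones); the right approach is probably Kröger's inequality or a direct trace estimate for the heat kernel on convex domains, again with constants made explicit. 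Once these two technical inputs are in hand with explicit constants, the theorem follows by combining them with the elementary inequality relating $|\partial\Omega|^n/|\Omega|^{n-1}$ to the number of inradius-sized pieces in a decomposition of a convex body.
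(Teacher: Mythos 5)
Your upper-bound sketch is sound in outline, but your lower bound has a genuine gap, and it sits exactly where the paper has to work hardest. To count Neumann eigenvalues below $\lambda_1(\Omega)$ you need disjointly supported trial functions whose Rayleigh quotients are at most $\lambda_1(\Omega)$ itself; the chain ``Rayleigh quotient $\lesssim_n r^{-2}\lesssim_n\lambda_1$'' (with $r$ the inradius) only gives a bound $\le C_n\lambda_1(\Omega)$, which is useless for the count. Worse, the specific construction cannot work as described: if a trial function is ``locally supported and suitably cut off'' on a piece $P$ whose closure lies in the interior of $\Omega$ --- which is the case for every interior cell of an inradius-scale tiling --- then it belongs to $H^1_0(P)$, so its Rayleigh quotient is at least $\lambda_1(P)\ge\lambda_1(\Omega)$ by Dirichlet domain monotonicity; such bumps can never land below $\lambda_1(\Omega)$. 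Quantitatively, $\lambda_1(\Omega)$ can be as small as about $\pi^2/(4r^2)$ (slab-like bodies), while a function vanishing on the boundary of a cell of diameter $O(r)$ has Rayleigh quotient at least $j_{n/2-1,1}^2$ times a comparable scale, which is larger by a factor growing like $n^2$. So the only useful trial functions are those whose supports run all the way across the \emph{thin} directions of $\Omega$ out to $\partial\Omega$ and which are (nearly) constant in those directions, exploiting the Neumann condition; to know which directions are thin, to size the pieces in the long directions (one needs side $\ge c\sqrt{n}\,r$, not $\sim r$, to beat $\pi^2/(4r^2)$), and to control the degenerating cross-sections near the ends, one is driven to a John-ellipsoid-type normalization. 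This is precisely how the paper proceeds: it inscribes the John box $R\subseteq\Omega\subseteq n^{3/2}R$ and, instead of hand-building trial functions on $\Omega$, invokes Funano's quantitative Neumann domain monotonicity $\mu_j(\Omega)\le E\,n^2\,\mu_j(R)$, so that the explicit box eigenfunctions (automatically constant in thin directions) do the work and the count becomes a lattice-point count in an ellipsoid. Your sketch contains neither this reduction nor a workable substitute, so the lower bound is not established as written.

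On the upper bound: covering $\Omega$ by $O_n(|\Omega|/r^n)=O_n(|\partial\Omega|^n/|\Omega|^{n-1})$ convex pieces of diameter $\lesssim_n r$ and summing Neumann counts is fine, but the key input you cite is not yet a proof: Kr\"oger's inequality controls sums of Neumann eigenvalues, not the counting function from above, so it does not directly give ``$O_n(1)$ Neumann eigenvalues of a convex piece below $C_n/\mathrm{diam}^2$.'' That statement is true and can be proved by subdividing each piece into convex sub-pieces of small diameter and applying the Payne--Weinberger inequality $\mu_2\ge\pi^2/\mathrm{diam}^2$ --- which is exactly the device the paper uses for its polygonal upper bound --- whereas for the convex theorem the paper again routes both bounds through $R\subseteq\Omega\subseteq Q=n^{3/2}R$, Funano's theorem, and explicit box spectra, which is what yields the stated constants $C_1,C_2$.
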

We make no claim that the constants $C_1$ and $C_2$ in Theorem \ref{mainthmconvex} are sharp.  However, it would be interesting to find the sharp constants, and we now describe a potential improvement. Let $E$ be the constant in Theorem \ref{funanoestimate} (Funano's main theorem in \cite{funano}) below. Then we may take $$C_1=\omega_n\cdot 2^{-n}\cdot E^{-n/2}\cdot n^{-\frac{3}{2}n(n+1)}\;\;\text{and}\;\;C_2=E^{n/2}\cdot n^{\frac{3}{2}n^2+n}.$$ Therefore, improving the constant in Funano's theorem automatically improves our constant. Though some effort has been made by Freitas and Kennedy \cite{freitaskennedy} to study the optimal constant, its exact value is still unknown. \\
\indent Due to the convexity hypothesis, Theorem \ref{mainthmconvex} implies another interesting set of geometric bounds on $N(\Omega)$. Let $\mathcal{V}_{n-1}$ denote the set of all $(n-1)$-dimensional affine subspaces of $\Rbb^n$, and set $$A_{n-1}(\Omega):=\sup_{V\in \mathcal{V}_{n-1}}|\Omega\cap V|.$$ In other words, $A_{n-1}(\Omega)$ is the volume of the largest $(n-1)$-dimensional cross section of $\Omega$. It is clear that $A_{n-1}(\Omega)\leq \frac12|\partial\Omega|$, and we show in Lemma \ref{crosssectionperimeter} below that $$|\partial\Omega|\leq \frac{(2n)^n}{\omega_{n-1}}A_{n-1}(\Omega).$$ We expect that this inequality is not new, but we have been unable to find it in the literature. These observations prove that
\begin{coro}\label{crosssectionest}
    Let $n\geq 2$, and let $C_1$ and $C_2$ be as in Theorem \ref{mainthmconvex}. Then for any bounded convex domain $\Omega\subseteq \Rbb^n$, $$2^n\cdot C_1\cdot \frac{A_{n-1}(\Omega)^n}{|\Omega|^{n-1}}\leq N(\Omega)\leq \frac{(2n)^{n^2}}{\omega_{n-1}^{n}}\cdot C_2\cdot\frac{A_{n-1}(\Omega)^n}{|\Omega|^{n-1}}.$$
\end{coro}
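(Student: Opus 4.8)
The plan is to derive Corollary \ref{crosssectionest} purely formally from Theorem \ref{mainthmconvex} together with the two-sided comparison between $|\partial\Omega|$ and $A_{n-1}(\Omega)$ stated in the excerpt. Concretely, I would first record the elementary lower bound $A_{n-1}(\Omega)\leq \tfrac12|\partial\Omega|$: any hyperplane $V$ meeting the interior of the convex body $\Omega$ splits $\partial\Omega$ into two pieces each of which projects onto $\Omega\cap V$, so orthogonal projection onto $V$ (which is distance-nonincreasing, hence does not increase $(n-1)$-dimensional Hausdorff measure) shows $|\Omega\cap V|\leq |\partial\Omega\cap\{\text{one side}\}|$, and summing the two sides gives $2|\Omega\cap V|\leq|\partial\Omega|$; taking the supremum over $V$ yields $A_{n-1}(\Omega)\leq\tfrac12|\partial\Omega|$, i.e. $|\partial\Omega|\geq 2A_{n-1}(\Omega)$. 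Second, I would invoke Lemma \ref{crosssectionperimeter}, which gives the reverse estimate $|\partial\Omega|\leq \tfrac{(2n)^n}{\omega_{n-1}}A_{n-1}(\Omega)$.

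With these two bounds in hand, the corollary is immediate. Raising $|\partial\Omega|\geq 2A_{n-1}(\Omega)$ to the $n$-th power gives $|\partial\Omega|^n\geq 2^n A_{n-1}(\Omega)^n$, and plugging this into the lower bound of Theorem \ref{mainthmconvex} yields
\[
N(\Omega)\geq C_1\cdot\frac{|\partial\Omega|^n}{|\Omega|^{n-1}}\geq 2^n\cdot C_1\cdot\frac{A_{n-1}(\Omega)^n}{|\Omega|^{n-1}}.
\]
Similarly, raising $|\partial\Omega|\leq \tfrac{(2n)^n}{\omega_{n-1}}A_{n-1}(\Omega)$ to the $n$-th power gives $|\partial\Omega|^n\leq \tfrac{(2n)^{n^2}}{\omega_{n-1}^{n}}A_{n-1}(\Omega)^n$, and substituting into the upper bound of Theorem \ref{mainthmconvex} yields
\[
N(\Omega)\leq C_2\cdot\frac{|\partial\Omega|^n}{|\Omega|^{n-1}}\leq \frac{(2n)^{n^2}}{\omega_{n-1}^{n}}\cdot C_2\cdot\frac{A_{n-1}(\Omega)^n}{|\Omega|^{n-1}}.
\]
This is exactly the claimed chain of inequalities.

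Strictly speaking, nothing in this deduction is an obstacle: once Lemma \ref{crosssectionperimeter} is granted, the corollary is a one-line consequence of Theorem \ref{mainthmconvex}, and the excerpt already flags this (``These observations prove that''). The only genuine content hides in Lemma \ref{crosssectionperimeter}, i.e. bounding the surface area of a convex body by a dimensional constant times its largest hyperplane section; I would expect that to be proved by a slicing/integral-geometry argument — writing $|\partial\Omega|$ via the Cauchy formula as an average of the $(n-1)$-volumes of orthogonal projections $|\pi_u(\Omega)|$ over directions $u$, bounding each projection by a cross section in a nearby direction (each point of $\pi_u(\Omega)$ corresponds to a chord of $\Omega$, and convexity lets one control $|\pi_u(\Omega)|$ by $A_{n-1}(\Omega)$ up to the stated constant), though the clean combinatorial constant $(2n)^n/\omega_{n-1}$ suggests a more hands-on covering argument. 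Since that lemma is assumed available here, the proof of the corollary itself reduces to the two displayed substitutions above, and the main thing to be careful about is simply tracking the exponents $n$ versus $n^2$ and the placement of $\omega_{n-1}$.
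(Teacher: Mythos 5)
Your proof is correct and is essentially the paper's own argument: the corollary is obtained exactly by combining Theorem \ref{mainthmconvex} with the trivial bound $A_{n-1}(\Omega)\leq\tfrac12|\partial\Omega|$ and Lemma \ref{crosssectionperimeter}, just as you do. (Your side speculation about how Lemma \ref{crosssectionperimeter} is proved differs from the paper, which uses the John ellipsoid and the circumscribed rectangular prism rather than a Cauchy-formula argument, but that is immaterial to the corollary's deduction.)
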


\subsection{Non-convex domains}
\indent In recent works, the authors make conjectures similar to the statement of Theorem \ref{mainthmconvex}. In particular, Conjecture 2 in \cite{coxetal} asserts that the conclusion of Theorem \ref{mainthmconvex} holds for \textit{any} bounded Lipschitz domain in $\Rbb^n$ (possibly with different constants $C_1$ and $C_2$), and Conjecture 2 of \cite{freitas} implies that for $n=2$, $$\frac{|\partial\Omega|}{\sqrt{\pi|\Omega|}}\leq N(\Omega).$$ In Section \ref{counterexsection}, we present a sequence of counterexamples showing that each of these conjectures is false for non-convex domains. One may wonder whether, instead, the conclusion of Corollary \ref{crosssectionest} holds for all bounded Lipschitz domains. The counterexamples we give in Section \ref{counterexsection} show that this also is not the case. \\
\indent Despite this sequence of counterexamples, the numerical experiments of Cox-MacLachlin-Steeves indicate that for general non-pathological domains in $\Rbb^n$, there should be some relationship between the isoperimetric ratio and $N(\Omega)$. Unfortunately, we are unable to completely characterize this relationship in this paper. However, we do prove similar bounds to Theorem \ref{mainthmconvex} in the setting of polygonal domains.\\
\indent For $m\geq 3$ an integer, let $\mathcal{P}_m$ denote the set of all polygons with at most $m$ edges. To be precise, we define a \textit{polygon} to be a bounded Lipschitz domain $P\subseteq \Rbb^2$ whose boundary is a finite union of line segments. Note that we do not require that a polygon be simply connected. We allow for vertices to have angle equal to $\pi$, so $\mathcal{P}_m\subseteq \mathcal{P}_n$ for $m\leq n$.
\begin{thm}\label{polygonthm}
    For all $m\geq 3$, there exist $C_1,C_2>0$ depending only on $m$ such that for any $P\in \mathcal{P}_m$, $$C_1\cdot \frac{|\partial P|}{\sqrt{|P|}}\leq N(P)\leq C_2\cdot \frac{|\partial P|^2}{|P|}.$$ Moreover, we may take $$C_1=\frac{j_0}{4\sqrt{2\pi}\cdot m(m-1)}\;\;\text{and}\;\; C_2=55(m-2)^2,$$ where $j_0\approx 2.4048$ is the first positive zero of the Bessel function of order zero. 
\end{thm}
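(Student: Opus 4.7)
The plan is to prove the upper bound via triangulation and Dirichlet--Neumann bracketing, reducing to the convex case of Theorem \ref{mainthmconvex}, and to prove the lower bound by constructing test functions in a rectangular strip attached to a long boundary edge of $P$.

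For the upper bound, I would triangulate $P$ into at most $m-2$ triangles $T_1,\dots,T_{m-2}$ (for simply connected $P$; for $P$ with holes only a minor combinatorial modification is needed). The inclusion $H^1(P) \hookrightarrow \bigoplus_i H^1(T_i)$ is isometric for the Dirichlet form, so the min--max principle gives $\mu_k(P) \geq \mu_k(\bigsqcup_i T_i)$, hence
\[
N(P,\lambda_1(P)) \leq \sum_{i=1}^{m-2} N(T_i,\lambda_1(P)).
\]
The Dirichlet bracketing inequality $\lambda_1(P) \leq \lambda_1(T_i)$ forces $N(T_i,\lambda_1(P)) \leq N(T_i)$, and the $n=2$ case of Theorem \ref{mainthmconvex} bounds $N(T_i)$ by a constant times $|\partial T_i|^2/|T_i|$. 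It then remains to bound $\sum_i |\partial T_i|^2/|T_i|$ by a constant times $(m-2)^2 |\partial P|^2/|P|$. Choosing a triangulation whose interior diagonals all have length at most the diameter of $P$ (itself at most $|\partial P|/2$ in the simply connected case) gives $\sum_i |\partial T_i| \leq (m-2)|\partial P|$, and combining this with the constraint $\sum_i |T_i| = |P|$ through Cauchy--Schwarz-type estimates should yield the required factor of $(m-2)^2$.

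For the lower bound, the target is to produce $K \gtrsim |\partial P|/(m(m-1)\sqrt{|P|})$ linearly independent functions in $H^1(P)$ with Rayleigh quotient at most $\lambda_1(P)$. Pick the longest edge $e$ of $P$, of length $\ell \geq |\partial P|/m$. The inward perpendicular from each $s \in e$ first meets one of the $m-1$ other edges of $P$, and the preimage of each such edge is an interval; this partitions $e$ into at most $m-1$ subintervals, the longest of which has length $\ell' \geq |\partial P|/(m(m-1))$. Over this subinterval sits a trapezoidal region $Q \subset P$ containing a rectangle $R$ of dimensions $\ell' \times h_0$ with one long side on $\partial P$. On $R$ take the test functions
\[
f_k(s,t)=\sin(k\pi s/\ell')\,\phi(t),\qquad 1\leq k\leq K,
\]
with $\phi$ a cutoff vanishing at $t=h_0$ and free at $t=0$; extended by zero to $P$ these lie in $H^1(P)$, are pairwise orthogonal, and have Rayleigh quotient $(k\pi/\ell')^2+\pi^2/(4h_0^2)$ for the optimal $\phi$. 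Choosing $h_0\sim \pi/\sqrt{\lambda_1(P)}$ and invoking Faber--Krahn ($\lambda_1(P)\geq \pi j_0^2/|P|$) yields the count matching $C_1$.

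The main obstacle is guaranteeing that a strip of the required width $h_0 \sim \pi/\sqrt{\lambda_1(P)}$ actually fits inside $P$ above the chosen subinterval. If the opposite edge lies closer than $h_0$ everywhere above the chosen subinterval, one must either select a different subinterval or employ a narrower strip and accept fewer usable $k$, compensating via the improved lower bound on $\lambda_1(P)$ that local thinness of $P$ can provide in favorable situations. Reconciling these two regimes is where the argument becomes delicate, and the $m(m-1)$ factor in $C_1$ reflects the two pigeonhole steps of first choosing an edge and then choosing a subinterval on which the visibility is uniform.
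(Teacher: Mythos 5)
Both halves of your proposal contain a genuine gap. For the upper bound, the chain $N(P)\le\sum_i N(T_i,\lambda_1(P))\le\sum_i N(T_i)\lesssim\sum_i|\partial T_i|^2/|T_i|$ cannot be closed: the final step ``bound $\sum_i|\partial T_i|^2/|T_i|$ by $C(m)\,|\partial P|^2/|P|$'' is false, and Cauchy--Schwarz in fact goes the wrong way, giving $\sum_i|\partial T_i|^2/|T_i|\ge\bigl(\sum_i|\partial T_i|\bigr)^2/|P|$, a lower bound. Concretely, take a unit square with one corner truncated at distance $\delta$, a convex pentagon with an edge of length $\delta\sqrt2$: in every triangulation on the vertex set, the triangle containing that short edge has area $O(\delta)$ and perimeter of order $1$, so $\sum_i|\partial T_i|^2/|T_i|\gtrsim 1/\delta\to\infty$ while $|\partial P|^2/|P|$ stays bounded; moreover, by the lower bound of Theorem \ref{mainthmconvex}, the intermediate quantity $\sum_i N(T_i)$ itself blows up, so the loss occurs already at the step $N(T_i,\lambda_1(P))\le N(T_i)$ and no summation trick can repair it. The paper never passes to the pieces' own thresholds $\lambda_1(T_i)$: it cuts $P$ at reflex vertices along angle bisectors into at most $m-2$ convex pieces $Q_i$, fixes the single threshold $8\pi^2/w_1^2\ge\lambda_1(P)$ (with $w_1$ the largest ``width'' among the pieces), covers each $Q_i$ by squares of side $w_1/5$, and applies Payne--Weinberger \cite{payneweinberger} to each intersection, so that even a very thin piece contributes only $O(|\partial P|/w_1)$ eigenvalues below the common threshold.

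For the lower bound, the obstacle you flag is fatal to your construction as proposed, and the suggested repair is unsound because $\lambda_1(P)$ is a global quantity: local thinness above the chosen subinterval does not raise it. Take the unit square with a long thin triangular spike (length $L$, height $\varepsilon$) attached so that the longest edge is the common bottom edge; the longest first-hit subinterval lies under the spike, where $P$ has thickness at most $\varepsilon$, while $\lambda_1(P)\le 2\pi^2$ because of the square. A strip of width $h_0\sim\lambda_1(P)^{-1/2}$ does not fit, and shrinking to width $\varepsilon$ makes the Dirichlet-cap term $\pi^2/(4h_0^2)\sim\varepsilon^{-2}$ exceed $\lambda_1(P)$, so your family contributes no admissible test functions there, whereas the required count grows with $L$. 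The missing idea (and the paper's route) is to drop the Dirichlet cap entirely: use test functions depending only on the coordinate $x$ along the longest edge, constant in the transverse variable, supported over a subinterval on which the cross-section length $h(x)=|\{y:(x,y)\in P\}|$ --- piecewise affine with at most $m-1$ affine pieces --- varies by at most a factor of $2$ (half of its longest affine piece). The Rayleigh quotient then becomes a weighted one-dimensional quotient with weight $h$, bounded by twice the unweighted one, so the transverse thickness never enters, and Faber--Krahn yields $N(P)\gtrsim\ell/\sqrt{|P|}$ with the stated constant. (Secondary points: for non-convex $P$ the first-hit preimage of an edge need not be an interval, so your pigeonhole count needs the cross-section function instead; and the $m-2$ triangle count fails for polygons with holes, which the paper's definition allows.)
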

\begin{remk}
    The lower bound in Theorem \ref{polygonthm} is weaker than the one in Theorem \ref{mainthmconvex}. Since triangles are convex, Theorem \ref{mainthmconvex} implies that this lower bound is not optimal if $m=3$. In contrast, for fixed $m\geq 4$, we show in Example \ref{polygoncounterex} below that the lower bound for polygons is actually sharp up to the constant $C_1$. By Theorem \ref{mainthmconvex}, the upper bound in Theorem \ref{polygonthm} is sharp up to the constant $C_2$. We again make no claim that the constants themselves are sharp.
\end{remk}

\subsection{Fiber bundles}
We also study $N(\Omega)$ for certain families of Riemannian manifolds that degenerate onto lower dimensional submanifolds. Recall that on a Riemannian manifold $(\Omega,g_{\Omega})$, the Riemannian metric defines the Laplace-Beltrami operator. If $\Omega$ has nonempty boundary, then we can define Neumann and Dirichlet Laplace-Beltrami operators. We use the same notation for the eigenvalues of these operators as in the Euclidean case, and we will use the notation $|\cdot|$ to denote the Riemannian volume of a manifold.\\
\indent Because the exact statement is somewhat technical, we put off stating the general theorem we prove until Section \ref{mfldsection}. Roughly speaking, we find a precise asymptotic on $N(\Omega)$ in terms of the isoperimetric ratio for a family of fiber bundles with shrinking nearly isometric fibers. See Theorem \ref{asymptoticthm} below. Here we illustrate the utility of Theorem \ref{asymptoticthm} with a simple example. Let $B^n$ denote the unit Euclidean $n$-ball, and let $S^{n-1}:=\partial B^n$ denote the unit sphere. 
\begin{thm}\label{tubenbhd}
    Let $(X,g_X)$ be an $(n+m)$-dimensional Riemannian manifold, and let $M\subseteq X$ be a closed $m$-dimensional submanifold. For $\epsilon>0$, let $\Oe$ denote the set of points in $X$ with geodesic distance at most $\epsilon$ to $M$. Then, as $\epsilon\to 0^+$, 
    \begin{align*}
        N(\Oe)&=\frac{\omega_m}{(2\pi)^m}\cdot|M|\cdot\Bigg(\sum_{j=1}^{N(B^n)}\big(\lambda_1(B^n)-\mu_j(B^n)\big)^{m/2}\Bigg)\cdot\frac{1}{\epsilon^m}+o\Big(\frac{1}{\epsilon^m}\Big)\\
        &=\frac{\omega_m\omega_n^{n+m-1}}{(2\pi)^m|S^{n-1}|^{n+m}}\cdot\Bigg(\sum_{j=1}^{N(B^n)}\big(\lambda_1(B^n)-\mu_j(B^n)\big)^{m/2}\Bigg)\cdot \frac{|\partial\Oe|^{n+m}}{|\Oe|^{n+m-1}}+o\Big(\frac{|\partial\Oe|^{n+m}}{|\Oe|^{n+m-1}}\Big).
    \end{align*}
\end{thm}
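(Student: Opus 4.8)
The plan is to derive Theorem~\ref{tubenbhd} as a direct corollary of the general asymptotic result Theorem~\ref{asymptoticthm}, once we recognize the tubular neighborhood $\Oe$ as a fiber bundle with shrinking, nearly isometric fibers. First I would fix the normal exponential map of $M$ in $X$: for $\epsilon$ small enough that $\epsilon < $ the normal injectivity radius of $M$, the map $\exp^\perp\colon \{v\in\nu M : |v|\le\epsilon\}\to\Oe$ is a diffeomorphism, exhibiting $\Oe$ as the disk bundle of the normal bundle $\nu M\to M$ with fibers diffeomorphic to $\epsilon B^n$. Rescaling each fiber by $1/\epsilon$, I would write the induced metric $g_\Oe$ in Fermi coordinates as $\epsilon^2 g_{B^n} + g_M + O(\epsilon)$-type corrections (the off-diagonal and curvature terms vanish to appropriate order as $\epsilon\to0$), so that after the rescaling the fibers converge to the Euclidean unit ball $B^n$ in the sense required by the hypotheses of Theorem~\ref{asymptoticthm}, and the base converges to $(M,g_M)$.

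Next I would simply quote Theorem~\ref{asymptoticthm}, which I am told gives a precise asymptotic for $N$ of such a collapsing family in terms of the base volume, the fiber data $\{\lambda_1-\mu_j\}$ for the model fiber, and $\epsilon$. Matching the collapsing-bundle data to the statement: the base is $M$ with volume $|M|$, the model fiber is $B^n$, the fiber dimension is $n$, the base dimension is $m$, and the relevant spectral sum is $\sum_{j=1}^{N(B^n)}(\lambda_1(B^n)-\mu_j(B^n))^{m/2}$. The factor $\omega_m/(2\pi)^m$ is the Weyl-type constant that arises from counting lattice points (Neumann eigenvalues of the large base factor) below a threshold in dimension $m$; this should come out of Theorem~\ref{asymptoticthm} verbatim. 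This yields the first displayed line.

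For the second displayed line, the task is purely geometric-asymptotic: express $|M|$ and $\epsilon^{-m}$ in terms of the isoperimetric ratio $|\partial\Oe|^{n+m}/|\Oe|^{n+m-1}$. As $\epsilon\to0$, the fiber is an $n$-ball of radius $\epsilon$, so $|\Oe| = \omega_n\,\epsilon^n\,|M| + o(\epsilon^n)$ and $|\partial\Oe| = |S^{n-1}|\,\epsilon^{n-1}\,|M| + o(\epsilon^{n-1})$ (the boundary of the tube is the $\epsilon$-sphere bundle; the "top and bottom" contributions are absent since $M$ is closed). Hence
\begin{align*}
\frac{|\partial\Oe|^{n+m}}{|\Oe|^{n+m-1}} &= \frac{|S^{n-1}|^{n+m}\,\epsilon^{(n-1)(n+m)}\,|M|^{n+m}}{\omega_n^{n+m-1}\,\epsilon^{n(n+m-1)}\,|M|^{n+m-1}}\big(1+o(1)\big) = \frac{|S^{n-1}|^{n+m}}{\omega_n^{n+m-1}}\cdot\frac{|M|}{\epsilon^m}\big(1+o(1)\big),
\end{align*}
using $(n-1)(n+m)-n(n+m-1) = -m$. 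Solving for $|M|/\epsilon^m$ and substituting into the first line, with $\omega_m/(2\pi)^m$ carried along, produces the stated constant $\omega_m\omega_n^{n+m-1}\big/\big((2\pi)^m|S^{n-1}|^{n+m}\big)$, and the error term $o(1/\epsilon^m)$ transfers to $o\big(|\partial\Oe|^{n+m}/|\Oe|^{n+m-1}\big)$ since the two quantities are comparable.

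The main obstacle I anticipate is not in this final bookkeeping but in verifying that the rescaled tubular neighborhoods genuinely satisfy the (somewhat technical) hypotheses of Theorem~\ref{asymptoticthm}: one must control the metric on the disk bundle uniformly as $\epsilon\to0$, check that the fibers are "nearly isometric" to $B^n$ in whatever quantitative sense the theorem demands (likely a $C^0$ or $C^1$ closeness of metrics, with rates), and confirm that the second fundamental form and normal curvature corrections are $o(1)$ after rescaling. Once the bundle structure and metric expansion in Fermi coordinates are in hand, the rest is the elementary volume asymptotics above.
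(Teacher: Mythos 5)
Your proposal is correct and follows essentially the same route as the paper: the paper also exhibits $\Oe$ as a disk bundle via the normal exponential map (using local orthonormal frames of the normal bundle to build the trivializations $\phi_i$), checks the $(1+C\epsilon)$-bi-Lipschitz hypothesis of Theorem \ref{asymptoticthm} by the standard expansion of the exponential map, and then invokes that theorem, whose second displayed line (proved via Lemma \ref{bdysize}) already encodes the tube volume asymptotics you recompute by hand for the isoperimetric form of the statement.
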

\begin{remk}
    The asymptotic provided by Theorem \ref{tubenbhd} depends only on the dimensions of $X$ and $M$ and the volume of $M$ but not on the particular isometric embedding of $M$ in $X$ or the geometry of $X$. 
\end{remk}
\begin{remk}
    The asymptotic behavior of eigenvalues on domains and manifolds that degenerate onto lower dimensional objects has been long studied. For a small selection of related results, see \cite{fukaya}, \cite{fs}, \cite{freitas2}, and \cite{luc1}. See also the survey paper \cite{grieser}.
\end{remk}

\subsection{Outlines of proofs}
\indent We now outline the proofs of the main theorems, beginning with Theorem \ref{mainthmconvex}.\\
\indent We prove in Proposition \ref{existrectangles} that given a convex domain $\Omega$, after applying an isometry if necessary, there exists a rectangular prism $R\subseteq \Omega$ such that $\Omega$ is contained in a rectangular prism $Q$ obtained by dilating $R$ by $n^{3/2}$. As a result, the isoperimetric ratio of $\Omega$ is bounded above and below by constant multiples of the isoperimetric ratio of $R$, where the constants depend only on $n$. In the Appendix, we prove a ``quantitative" Neumann version of Dirichlet domain monotonicity, slightly upgrading a result of Funano \cite{funano} (namely, we remove the piecewise smoothness assumption on $\partial\Omega$):
\begin{thm}[cf. Theorem 1.1 \cite{funano}]\label{funanoestimate}
    Let $n\geq 2$. There exists a universal constant $E>0$ such that if $\Omega\subseteq\Omega'\subseteq\Rbb^n$ are bounded convex domains, then for all $j\geq 1$, $$\mu_j(\Omega')\leq E\cdot n^2 \cdot \mu_j(\Omega).$$ Moreover, we make take $E=92^2$.
\end{thm}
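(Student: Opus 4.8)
The plan is to reduce to Funano's Theorem 1.1 from \cite{funano} by a smooth approximation argument, the only novelty being the removal of the piecewise-smoothness hypothesis on the boundaries. Fix bounded convex domains $\Omega\subseteq\Omega'\subseteq\Rbb^n$ and $j\geq 1$. First I would choose sequences of bounded convex domains $\Omega_k$ and $\Omega_k'$ with smooth (say, real-analytic, e.g. ellipsoids or finite intersections of balls) boundaries such that $\Omega_k\subseteq\Omega_k'$ for every $k$ and $\Omega_k\to\Omega$, $\Omega_k'\to\Omega'$ in a suitable sense (Hausdorff convergence together with convergence of volumes; one can obtain the nesting by slightly shrinking an inner smooth approximant of $\Omega$ and slightly enlarging an outer smooth approximant of $\Omega'$, then taking convex hulls of finitely many balls, or by mollifying support functions). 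Funano's theorem applies to each pair, giving $\mu_j(\Omega_k')\leq E\cdot n^2\cdot\mu_j(\Omega_k)$ with $E=92^2$.

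The second step is to pass to the limit in this inequality, which requires the stability of Neumann eigenvalues under convex domain approximation. This is where the real work lies: Neumann eigenvalues are notoriously unstable under general perturbations, but for convex domains converging in the Hausdorff sense with volume convergence one has continuity of the $\mu_j$. I would invoke (or prove via a Mosco-convergence / form-convergence argument) that if $\Omega_k\to\Omega$ in this sense with all sets convex, then $\mu_j(\Omega_k)\to\mu_j(\Omega)$ for each fixed $j$. The standard route is: (i) construct a bi-Lipschitz map between $\Omega_k$ and $\Omega$ with bi-Lipschitz constants tending to $1$ — for nested convex bodies close in Hausdorff distance this is classical (radial maps from an interior point, or gradient maps of convex functions), and it transports $H^1$ functions with controlled norm distortion; (ii) conclude via the min-max characterization $\mu_j(\Omega)=\min_{\dim V=j}\max_{0\neq u\in V}\frac{\int_\Omega|\nabla u|^2}{\int_\Omega u^2}$ that $\mu_j$ is continuous. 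One should make sure the smooth approximants can indeed be taken nested \emph{and} Hausdorff-close to the originals simultaneously; a clean way is to first sandwich $\Omega$ between $(1-\delta)$-scaled and $(1+\delta)$-scaled copies of smooth bodies, exploiting that a convex body can be approximated from inside and outside by smooth convex bodies (Schneider's book on convex bodies).

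The third step is bookkeeping: since $\mu_j(\Omega_k')\to\mu_j(\Omega')$ and $\mu_j(\Omega_k)\to\mu_j(\Omega)$, taking $k\to\infty$ in $\mu_j(\Omega_k')\leq E n^2\mu_j(\Omega_k)$ yields $\mu_j(\Omega')\leq E n^2\mu_j(\Omega)$ with the same constant $E=92^2$, completing the proof. I expect the main obstacle to be step two — specifically, verifying continuity of Neumann eigenvalues along the approximating sequence with \emph{no} regularity assumed on the limiting domains beyond convexity; the bi-Lipschitz transport argument handles this, but one must be careful that the transport map and its inverse both have $H^1$-norm distortion tending to $1$, which is exactly where convexity (and nestedness, giving a uniform interior cone/ball condition after rescaling) is used. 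Everything else — the existence of nested smooth convex approximants and the final limit — is routine.
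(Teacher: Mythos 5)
Your overall strategy is exactly the paper's: approximate by nested smooth convex bodies (Schneider's approximation theorem), apply Funano's Theorem 1.1 to each smooth pair with $E=92^2$, and pass to the limit using stability of Neumann eigenvalues along the approximation. The difference, and the problem, lies in how you justify that stability step. Your step (i) asserts that for nested convex bodies that are Hausdorff-close, the radial map from an interior point is bi-Lipschitz with constant tending to $1$, and that this is classical. That is not true when the limit domain has corners: if $\Omega$ is, say, a square and $\Omega_k$ a smooth outer approximant at Hausdorff distance $\delta_k$, the ratio $\rho_k/\rho$ of radial functions is $1+O(\delta_k)$ in sup norm, but its angular derivative is of order $1$ (not $o(1)$) on the small angular sectors where $\rho$ has a derivative jump and $\rho_k$ smooths it out. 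Hence the pointwise bi-Lipschitz constant of the radial map stays bounded away from $1$, and transporting test functions only gives $\mu_j(\Omega_k)\asymp\mu_j(\Omega)$ up to a fixed constant, not $\mu_j(\Omega_k)\to\mu_j(\Omega)$; a constant-factor loss here would degrade the constant $E\cdot n^2$ you are trying to preserve. So, as written, the key step is a gap. It is repairable: either invoke a genuine stability theorem (Mosco/form convergence of $H^1$ spaces for convex domains under Hausdorff convergence, which you mention as an alternative but do not carry out), or argue as the paper does.

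The paper's route avoids maps entirely: it takes \emph{outer} smooth approximants $\Omega\subseteq\Omega_k$, $\Omega'\subseteq\Omega_k'$ (rescaling to keep $\Omega_k\subseteq\Omega_k'$), and appeals to Proposition 2.3 of Arrieta--Carvalho, which gives $\mu_j(\Omega_k)\to\mu_j(\Omega)$ provided the first eigenvalue of the thin shell $\Omega_k\setminus\Omega$, with Dirichlet condition on $\partial\Omega$ and Neumann on the rest, tends to infinity. That blow-up is then proved by an explicit one-dimensional Poincar\'e estimate in polar coordinates (Lemma \ref{evalineq2}), using that the shell is a radial graph region of width $\to 0$ over $S^{n-1}$. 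If you replace your bi-Lipschitz claim with either the Arrieta--Carvalho criterion plus such a shell estimate, or a citation to Neumann spectral stability for convex domains under Hausdorff convergence, your argument closes and coincides in substance with the paper's proof.
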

\noindent Applying Theorem \ref{funanoestimate} to the inclusions $R\subseteq\Omega\subseteq Q$, we reduce the problem to two simple lattice point counting problems similar to the ones solved in the proof of Theorem 2 of \cite{coxetal}, and this yields the estimates we desire in Theorem \ref{mainthmconvex}.\\
\indent The lower and upper bounds in Theorem \ref{polygonthm} are proved with different methods. The lower bound follows from the observation that the longest edge of an $m$-gon gives a trivial upper bound on the total perimeter. We then construct a test function supported on a strip of $\Rbb^2$ that is orthogonal to the longest edge and combine this with the Faber-Krahn inequality to get the lower bound. The upper bound follows from a decomposition scheme that partitions a non-convex polygon into a union of convex polygons. We then apply an argument based on Neumann domain monotonicity and the Payne-Weinberger inequality \cite{payneweinberger} to get the inequality. \\
\indent To prove Theorem \ref{asymptoticthm} below, from which Theorem \ref{tubenbhd} follows, we first consider the case of trivial Riemannian product manifolds, in which case the eigenvalues are computable in terms of the constituents of the product. As the product manifolds collapse onto a lower dimensional submanifold, we apply Weyl's law to find an asymptotic on $N(\Omega)$. To get the general result, we study perturbations of the product metric and then use a Dirichlet-Neumann bracketing argument to get a global result applicable to fiber bundles. \\
We now set some notation that will be employed throughout the paper. 
\begin{nota}
    When we wish to emphasize the dependence of the eigenvalues on the domain or manifold $\Omega$, we may write $\mu_j(\Omega)$ (resp. $\lambda_j(\Omega)$).
\end{nota}
\begin{nota}
    Suppose that $y\in \Rbb^n$, $\lambda\in \Rbb$, and $X\subseteq \Rbb^n$. By $y+\lambda\cdot X$ we denote the set $\{y+\lambda x\mid x\in X\}$.
\end{nota}
\begin{nota}
    Given a finite sequence of numbers $w_1,\dots ,w_n\in\Rbb$ and $i\in\{1,\dots ,n\}$, we denote by $w_1\dots \hat{w_i}\dots w_n$ the product $w_1\dots \hat{w_i}\dots w_n:=w_1\dots w_{i-1}w_{i+1}\dots w_n$.
\end{nota}

\section{Convex geometry}\label{convexgeo}
The proof of Theorem \ref{mainthmconvex} primarily involves studying the geometry of convex sets. We first recall a well-known result in convex geometry.

\begin{lem}\label{johnellipsoid}
    Let $\Omega\subseteq \Rbb^n$ be an open, bounded, convex set. There exists a unique open ellipsoid $J\subseteq\Omega$, known as the \textit{John ellipsoid} of $\Omega$, that has the largest volume among all open ellipsoids contained in $\Omega$. Moreover if $J$ is centered at the origin, then $\Omega\subseteq n\cdot J$. 
\end{lem}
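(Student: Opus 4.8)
\emph{Proof proposal.} This is essentially John's theorem, and the plan is to establish existence, then uniqueness, then the containment $\Omega\subseteq n\cdot J$, sketching only the key steps. For \emph{existence} I would run a compactness argument: parametrize a closed ellipsoid by a pair $(A,b)$, with $A$ a symmetric positive semidefinite $n\times n$ matrix and $b\in\Rbb^n$ its center, so the ellipsoid is $AB+b$ (here $B$ is the closed unit ball) with volume $\omega_n\det A$. The set $\mathcal C=\{(A,b):AB+b\subseteq\overline\Omega\}$ is closed, and bounded since $b\in\overline\Omega$ and $\|A\|$ is controlled by $\mathrm{diam}(\overline\Omega)$; hence it is compact, and $(A,b)\mapsto\det A$ attains a maximum on it at some $(A^*,b^*)$. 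This maximum is positive because the open set $\Omega$ contains a ball, so $A^*$ is positive definite. The open ellipsoid $J:=\mathrm{int}(A^*B)+b^*$ then lies in $\Omega$: a point $z\in J\cap\partial\Omega$ would admit a supporting hyperplane of $\Omega$ keeping the closed ellipsoid $A^*B+b^*\subseteq\overline\Omega$ weakly to one side while passing through the interior point $z$ of that ellipsoid, which is impossible. Since the closure of any open ellipsoid in $\Omega$ corresponds to a point of $\mathcal C$, $J$ has maximal volume.

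For \emph{uniqueness}, let $E_i=A_iB+c_i$ ($i=1,2$; $A_i$ symmetric positive definite) both have maximal volume, so $\det A_1=\det A_2$. Convexity of $\Omega$ gives $\tfrac12 E_1+\tfrac12 E_2\subseteq\Omega$, and this set contains the ellipsoid $\tfrac12(A_1+A_2)B+\tfrac12(c_1+c_2)$. By the Minkowski determinant inequality (concavity of $\det^{1/n}$ on positive definite matrices), $\det\!\big(\tfrac12(A_1+A_2)\big)\ge\sqrt{\det A_1\det A_2}=\det A_1$, with equality only when $A_1,A_2$ are proportional; maximality forces equality, and combined with $\det A_1=\det A_2$ this gives $A_1=A_2$. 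After an affine change of variables we may take $E_1$ to be the open unit ball and $E_2=u+E_1$; if $u\ne 0$, then $\mathrm{conv}(E_1\cup E_2)=\{x:\dist(x,[0,u])<1\}\subseteq\Omega$ contains the ellipsoid of revolution about $\Rbb u$ centered at $u/2$ with semi-axis $1+|u|/2$ along $u$ and semi-axes $1$ transversally, of volume $\omega_n\bigl(1+|u|/2\bigr)>\omega_n$ --- contradicting maximality. Hence $u=0$ and $E_1=E_2$.

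For the containment $\Omega\subseteq n\cdot J$, apply a linear map so that $J$ is the open unit ball; since $\Omega$ is open it suffices to show it contains no point $p$ with $|p|>n$. If it did, then after a rotation $p=te_1$ with $t>n$, and $C:=\mathrm{conv}(B\cup\{te_1\})\subseteq\overline\Omega$ is the ``ice-cream cone'' cut out by the half-spaces $\{v\cdot x\le 1\}$ over unit vectors $v$ with $v_1\le 1/t$ together with the supporting half-spaces of $B$ through $te_1$. I would then exhibit the ellipsoid of revolution $E$ about $\Rbb e_1$ with center $\alpha e_1$, axial semi-axis $a$, transverse semi-axes $b$, where $a=\tfrac{t+1}{n+1}$, $\alpha=a-1$, $b^2=\tfrac{(n-1)(t+1)}{(n+1)(t-1)}$, and check $E\subseteq C$: for each of the two families of constraints, $\max_{x\in E}v\cdot x$ is a convex function of $v_1$ over an interval, and the choice of $\alpha$ together with the relation $\alpha+\sqrt{a^2+(t^2-1)b^2}=t$ makes its endpoint values equal exactly what the constraint permits. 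Then $\mathrm{vol}(E)=\omega_n\,a\,b^{n-1}$, and an elementary one-variable optimization shows $a\,b^{n-1}>1$ precisely when $t>n$; this contradicts maximality of the unit ball, so $\Omega\subseteq n\cdot J$.

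Existence is routine and uniqueness is standard once the Minkowski-average trick is in hand; I expect the real obstacle to be the last step. The delicate points there are the correct guess of the inscribed revolution ellipsoid $E$ in the cone $C$, the verification $E\subseteq C$ via reduction to endpoint values of a convex function of $v_1$, and the computation isolating the dimension $n$ as the exact threshold in $a\,b^{n-1}>1\iff t>n$. Alternatively, one could invoke John's position/contact-point formalism for the last step, but the explicit revolution ellipsoid above keeps the argument self-contained.
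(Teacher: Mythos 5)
Your proposal is correct, but it takes a genuinely different route from the paper: the paper does not prove this lemma at all, it simply cites Theorem 10.12.2 of Schneider's book \cite{convexbook}, whereas you reconstruct the classical John argument in full (compactness for existence, the Minkowski-average plus Minkowski determinant inequality for uniqueness of the shape, the stadium/spheroid trick for uniqueness of the center, and the ice-cream-cone computation for $\Omega\subseteq n\cdot J$). Your sketch checks out at the delicate points you flag: with $J$ normalized to the unit ball and a point at distance $t$ on the $e_1$-axis, the support-function constraints for a spheroid $E$ with center $\alpha e_1$, axial semi-axis $a$ and transverse semi-axes $b$ reduce (by convexity of $v_1\mapsto \alpha v_1+\sqrt{a^2v_1^2+b^2(1-v_1^2)}$, valid since your parameters satisfy $a\geq b$) to the endpoint conditions $a-\alpha\leq 1$ and $\alpha+\sqrt{a^2+(t^2-1)b^2}\leq t$, which with equality give $b^2=\frac{(t+1)-2a}{t-1}$; optimizing $ab^{n-1}$ yields exactly your $a=\frac{t+1}{n+1}$, $b^2=\frac{(n-1)(t+1)}{(n+1)(t-1)}$, and then $\frac{d}{dt}\log\big(ab^{n-1}\big)=\frac{t-n}{t^2-1}$ with value $ab^{n-1}=1$ at $t=n$, so the threshold is precisely $t>n$ as claimed (the third constraint at $v_1=1$ is redundant, and passing between open/closed ellipsoids and $\Omega$ versus $\overline\Omega$ is harmless since $\mathrm{int}(\overline\Omega)=\Omega$ for open convex $\Omega$). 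What your approach buys is self-containedness and an explicit extremal spheroid exhibiting why the constant is $n$; what the paper's citation buys is brevity, since the lemma is standard and its proof is orthogonal to the spectral content of the paper.
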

\begin{proof}
    See Theorem 10.12.2 \cite{convexbook}.
\end{proof}

Since the Laplacian commutes with pullbacks by isometries, we may make use of a convenient choice of coordinates to facilitate our exposition. 
\begin{ass}\label{orientation}
    Given a bounded convex domain $\Omega\subseteq\Rbb^n$, assume without loss of generality that its John ellipsoid $J$ is centered at the origin with axes contained in the coordinate axes. Moreover, assume that the axes have decreasing lengths $$a_1\geq a_2\geq \dots \geq a_n>0,$$ where $a_j$ is the length of the axis of $J$ contained in the $j$th coordinate axis. 
\end{ass}

\begin{prop}\label{existrectangles}
    Given a bounded convex domain $\Omega\subseteq\Rbb^n$ satisfying Assumption \ref{orientation}, there exist rectangular prisms $R,Q$ satisfying
    $$R:=\Big[-\frac{a_1}{\sqrt{n}},\frac{a_1}{\sqrt{n}}\Big]\times\dots \times\Big[-\frac{a_n}{\sqrt{n}},\frac{a_n}{\sqrt{n}}\Big]\subseteq \Omega\subseteq \Big[-a_1n,a_1n\Big]\times\dots \times\Big[-a_nn,a_nn\Big]=:Q.$$
\end{prop}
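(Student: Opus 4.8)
The plan is to read both inclusions straight off John's theorem together with the normalization in Assumption \ref{orientation}. Under that assumption the John ellipsoid of $\Omega$ is the coordinate-axis ellipsoid $J=\{x\in\Rbb^n:\sum_{j=1}^n x_j^2/a_j^2<1\}$, and Lemma \ref{johnellipsoid} gives the chain $J\subseteq\Omega\subseteq n\cdot J$, where $n\cdot J=\{x\in\Rbb^n:\sum_{j=1}^n x_j^2/(na_j)^2<1\}$. So the whole proposition reduces to comparing an axis-parallel centered box with an axis-aligned ellipsoid, from both sides.

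For the right-hand inclusion I would argue: if $x\in n\cdot J$ then for each $j$ we have $x_j^2/(na_j)^2\le\sum_{k}x_k^2/(na_k)^2<1$, hence $|x_j|<na_j$; therefore $n\cdot J\subseteq Q$, and combining with $\Omega\subseteq n\cdot J$ gives $\Omega\subseteq Q$. For the left-hand inclusion I would check that $R$ is the largest centered box inscribed in $J$: for $x\in R$ one has $x_j^2/a_j^2\le(a_j/\sqrt n)^2/a_j^2=1/n$, so $\sum_j x_j^2/a_j^2\le n\cdot\frac1n=1$, i.e. $R\subseteq\overline J$. Since deleting a boundary null set changes neither the Dirichlet nor the Neumann spectrum, this is exactly what the subsequent applications of the proposition need; if one wants the set inclusion $R\subseteq\Omega$ literally, it is enough to note $\mathrm{int}\,R\subseteq J\subseteq\Omega$ and, if necessary, to shrink the half-widths from $a_j/\sqrt n$ to $(1-\delta)a_j/\sqrt n$, which keeps them comparable to $a_j$ and does not affect any later estimate.

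There is no real obstacle here: all the substance is in Lemma \ref{johnellipsoid}, and what remains is the elementary computation that the ellipsoid $\sum_j x_j^2/a_j^2\le1$ contains the centered box of half-widths $a_j/\sqrt n$ and is contained in the centered box of half-widths $a_j$. The only things to be careful about are the open-versus-closed point just mentioned and tracking the two normalizing factors $\sqrt n$ and $n$, which is what makes the circumscribed prism $Q$ exactly the dilate of the inscribed prism $R$ by the factor $n^{3/2}$ advertised in the outline.
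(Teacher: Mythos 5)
Your proof is correct and is essentially the paper's argument: both rest on the John inclusion $J\subseteq\Omega\subseteq n\cdot J$ from Lemma \ref{johnellipsoid}, the paper obtaining the box comparisons by applying the diagonal map $A$ (scaling $e_j$ by $a_j$) to the standard inclusions $[-1/\sqrt{n},1/\sqrt{n}]^n\subseteq B^n\subseteq[-1,1]^n$, which is exactly the coordinate computation you carry out directly. Your remark on the open-versus-closed corner points is a minor refinement the paper's own proof silently elides, and, as you note, it is immaterial for every later use of the proposition.
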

\begin{proof}
   Let $\tilde{R}:=\Big[-\frac{1}{\sqrt{n}},\frac{1}{\sqrt{n}}\Big]^n\subseteq \Rbb^n$ and $\tilde{Q}:=[-1,1]^n\subseteq\Rbb^n$. Then $R\subseteq B^n\subseteq Q$, where $B^n$ is the unit ball in $\Rbb^n$. Since $J$ is the image of $B^n$ under the diagonal linear map $A$ that scales the $j$th standard unit vector by $a_j$, $$R=A(\tilde{R})\subseteq A(B^n)=J\subseteq \Omega\;\;\text{and}\;\;\Omega\subseteq n\cdot J=n\cdot A(B^n)\subseteq n\cdot A(\tilde{Q})=Q.$$
\end{proof}

Using these rectangular prisms, we can immediately give a proof that the largest cross section of a bounded convex domain gives an upper bound on the surface area. This proves that Corollary \ref{crosssectionest} follows from Theorem \ref{mainthmconvex}. Let $A_{n-1}(\Omega)$ be as in Section \ref{intro}. 

\begin{lem}\label{crosssectionperimeter}
    For any convex body $\Omega\subseteq \Rbb^n$, $|\partial\Omega|\leq  \frac{(2n)^n}{\omega_{n-1}}A_{n-1}(\Omega)$. In particular, Theorem \ref{mainthmconvex} implies Corollary \ref{crosssectionest}.
\end{lem}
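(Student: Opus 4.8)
The plan is to use the rectangular prisms $R \subseteq \Omega \subseteq Q$ from Proposition~\ref{existrectangles} to compare both $|\partial\Omega|$ and $A_{n-1}(\Omega)$ to explicit quantities built from the axis lengths $a_1 \geq \dots \geq a_n$ of the John ellipsoid, up to dimensional constants, and then combine the two comparisons. First I would bound $|\partial\Omega|$ from above. Since $\Omega$ is convex and $\Omega \subseteq Q$, monotonicity of surface area under inclusion for convex bodies gives $|\partial\Omega| \leq |\partial Q|$. The prism $Q = [-a_1 n, a_1 n] \times \dots \times [-a_n n, a_n n]$ has $2n$ faces; the face perpendicular to the $i$th axis has $(n-1)$-dimensional measure $(2n)^{n-1} a_1 \dots \widehat{a_i} \dots a_n$, so $|\partial Q| = 2(2n)^{n-1}\sum_{i=1}^n a_1 \dots \widehat{a_i}\dots a_n \leq 2n(2n)^{n-1} a_1 \dots \widehat{a_n}\dots a_n = (2n)^n\, a_1 \cdots a_{n-1}$, using that dropping the smallest factor $a_n$ gives the largest term. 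So $|\partial\Omega| \leq (2n)^n\, a_1 \cdots a_{n-1}$.

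Next I would bound $A_{n-1}(\Omega)$ from below. The prism $R = [-a_1/\sqrt n, a_1/\sqrt n]\times\dots\times[-a_n/\sqrt n, a_n/\sqrt n]$ is contained in $\Omega$, so the hyperplane section $\Omega \cap \{x_n = 0\}$ contains the $(n-1)$-dimensional slab $[-a_1/\sqrt n, a_1/\sqrt n]\times\dots\times[-a_{n-1}/\sqrt n, a_{n-1}/\sqrt n]$, which has $(n-1)$-dimensional volume $(2/\sqrt n)^{n-1} a_1 \cdots a_{n-1}$. Hence $A_{n-1}(\Omega) \geq (2/\sqrt n)^{n-1} a_1\cdots a_{n-1}$, i.e. $a_1\cdots a_{n-1} \leq (\sqrt n/2)^{n-1} A_{n-1}(\Omega)$. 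Combining with the previous display,
\[
|\partial\Omega| \leq (2n)^n \Big(\tfrac{\sqrt n}{2}\Big)^{n-1} A_{n-1}(\Omega) = (2n)^n \cdot \tfrac{n^{(n-1)/2}}{2^{n-1}}\, A_{n-1}(\Omega) = 2 n^n n^{(n-1)/2} A_{n-1}(\Omega).
\]
This is cleaner than the claimed bound; to land exactly on $\frac{(2n)^n}{\omega_{n-1}}A_{n-1}(\Omega)$ I may instead want to route the lower bound through the largest inscribed ball: $R \supseteq$ a ball of radius $a_n/\sqrt n$, but that loses too much. Alternatively, compare $A_{n-1}(\Omega)$ to $\omega_{n-1}$ times a product of the $n-1$ largest John-ellipsoid axes using that $\Omega \supseteq J$ and the central section of $J$ perpendicular to its shortest axis is an $(n-1)$-dimensional ellipsoid with semi-axes $a_1/2, \dots, a_{n-1}/2$, giving $A_{n-1}(\Omega) \geq \omega_{n-1} \cdot a_1 \cdots a_{n-1}/2^{n-1}$, hence $a_1\cdots a_{n-1} \leq \frac{2^{n-1}}{\omega_{n-1}} A_{n-1}(\Omega)$, which plugged into $|\partial\Omega| \leq (2n)^n a_1\cdots a_{n-1}$ yields precisely $|\partial\Omega| \leq \frac{(2n)^n}{\omega_{n-1}} A_{n-1}(\Omega)$ (using $2^{n-1}(2n)^n/\omega_{n-1}$... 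I should be careful with the powers of $2$ here and reconcile the two routes).

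The main obstacle is bookkeeping the dimensional constants so that the final bound matches the stated $\frac{(2n)^n}{\omega_{n-1}}$ exactly rather than merely up to a dimensional factor; this amounts to choosing the sharpest of the two lower bounds for $A_{n-1}(\Omega)$ (the ellipsoidal central section of $J$ versus the inscribed box $R$) and verifying that $|\partial Q| \leq (2n)^n a_1\cdots a_{n-1}$ as claimed. The final sentence of the lemma — that Theorem~\ref{mainthmconvex} implies Corollary~\ref{crosssectionest} — is then immediate: substitute $A_{n-1}(\Omega) \leq \tfrac12|\partial\Omega| \leq \frac{(2n)^n}{\omega_{n-1}}A_{n-1}(\Omega)$ into the two-sided bound $C_1 |\partial\Omega|^n/|\Omega|^{n-1} \leq N(\Omega) \leq C_2 |\partial\Omega|^n/|\Omega|^{n-1}$, raising the sandwich inequality to the $n$th power, which transfers the $2^n$ and $(2n)^{n^2}/\omega_{n-1}^n$ factors onto $C_1$ and $C_2$ respectively.
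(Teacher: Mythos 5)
Your second route is exactly the paper's proof: bound $|\partial\Omega|\leq|\partial Q|\leq(2n)^n a_1\cdots a_{n-1}$ using $a_n\leq a_i$, and bound $A_{n-1}(\Omega)$ below by the central section of the John ellipsoid $J$. The only loose end you flagged, the stray factor $2^{n-1}$, disappears once you fix the convention: in Proposition \ref{existrectangles} the paper writes $J=A(B^n)$ with $A$ scaling $e_j$ by $a_j$, so the $a_j$ are \emph{semi}-axes, the slice $J\cap\{x_n=0\}$ is an $(n-1)$-ellipsoid with semi-axes $a_1,\dots,a_{n-1}$, and hence $A_{n-1}(\Omega)\geq|J\cap\{x_n=0\}|=\omega_{n-1}a_1\cdots a_{n-1}$ with no $2^{-(n-1)}$; combining gives precisely $|\partial\Omega|\leq\frac{(2n)^n}{\omega_{n-1}}A_{n-1}(\Omega)$. (Your first route, via the inscribed box $R$, is also valid but yields a different dimensional constant, which is why the paper slices $J$ instead.) Your deduction of Corollary \ref{crosssectionest} from Theorem \ref{mainthmconvex} via $A_{n-1}(\Omega)\leq\tfrac12|\partial\Omega|$ and the lemma is the same as the paper's.
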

\begin{proof}
    Since $J\subseteq \Omega$, we have 
    \begin{align*}
        A_{n-1}(\Omega)\geq A_{n-1}(J)\geq |J\cap \{x_n=0\}|=\omega_{n-1}a_1\dots a_{n-1}.
    \end{align*}
    Let $Q$ be as in Proposition \ref{existrectangles}. By Proposition 3.1 \cite{lecturesconvexgeo}, the inclusion $\Omega\subseteq Q$ implies that $|\partial\Omega|\leq |\partial Q|$. Combining these inequalities and using that $a_n\leq a_i$ for all $i$ gives 
    \begin{align*}
        |\partial\Omega|&\leq |\partial Q|\\
        &=2\sum_{i=1}^n(2a_1n)\dots \widehat{(2a_in)}\dots (2a_nn)\\
        &=2^nn^{n-1}\sum_{i=1}^na_1\dots \hat{a_i}\dots a_n\\
        &\leq (2n)^na_1\dots a_{n-1}\\
        &\leq \frac{(2n)^n}{\omega_{n-1}}A_{n-1}(\Omega).
    \end{align*}
\end{proof}

Two more technical lemmas are needed to prove Theorem \ref{mainthmconvex}. Suppose that $\Omega\subseteq \Rbb^n$ satisfies Assumption \ref{orientation}, and let $R\subseteq \Omega$ be the rectangular prism associated to $\Omega$ as in Proposition \ref{existrectangles}.

\begin{lem}\label{rectanglequantities}
    Let $\rho=\sqrt{\frac{1}{a_1^2}+\frac{1}{a_2^2}+\dots +\frac{1}{a_n^2}}$. Then $$2^{-n}\cdot n^{-n/2}\cdot\frac{|\partial R|^n}{|R|^{n-1}}\leq a_1 a_2\dots a_n\rho^n\leq 2^{-n}\cdot\frac{|\partial R|^n}{|R|^{n-1}}.$$
\end{lem}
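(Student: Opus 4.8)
The plan is a direct computation of $|R|$ and $|\partial R|$ followed by a single application of the Cauchy--Schwarz inequality. By Proposition \ref{existrectangles}, the $i$th side of $R$ has length $2a_i/\sqrt n$, so
\[
    |R| = \prod_{i=1}^n \frac{2a_i}{\sqrt n} = \frac{2^n}{n^{n/2}}\,a_1\cdots a_n, \qquad |\partial R| = 2\sum_{i=1}^n \prod_{j\neq i}\frac{2a_j}{\sqrt n} = \frac{2^n}{n^{(n-1)/2}}\,a_1\cdots a_n\sum_{i=1}^n \frac{1}{a_i},
\]
where in the last step I factor $a_1\cdots a_n$ out of the sum $\sum_i a_1\cdots\hat{a_i}\cdots a_n$.

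Substituting these into the ratio and tracking the powers of $2$ and $n$, I expect all of the $n$-dependence to cancel: explicitly,
\[
    \frac{|\partial R|^n}{|R|^{n-1}} = 2^{\,n^2-n(n-1)}\,a_1\cdots a_n\left(\sum_{i=1}^n\frac1{a_i}\right)^{\!n} = 2^n\,a_1\cdots a_n\left(\sum_{i=1}^n\frac1{a_i}\right)^{\!n}.
\]
The only genuine inequality then needed is the comparison of the $\ell^1$ and $\ell^2$ norms of the vector $(1/a_1,\dots,1/a_n)$, namely
\[
    \rho = \left(\sum_{i=1}^n\frac1{a_i^2}\right)^{1/2} \le \sum_{i=1}^n \frac1{a_i} \le \sqrt n\left(\sum_{i=1}^n\frac1{a_i^2}\right)^{1/2} = \sqrt n\,\rho,
\]
where the left inequality holds because the cross terms in $\big(\sum_i 1/a_i\big)^2$ are nonnegative and the right inequality is Cauchy--Schwarz applied to $(1/a_1,\dots,1/a_n)$ and $(1,\dots,1)$. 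Raising this chain to the $n$th power and multiplying through by $2^n\,a_1\cdots a_n$ gives
\[
    2^n\,a_1\cdots a_n\,\rho^n \le \frac{|\partial R|^n}{|R|^{n-1}} \le 2^n n^{n/2}\,a_1\cdots a_n\,\rho^n;
\]
dividing the first inequality by $2^n$ and the second by $2^n n^{n/2}$ yields precisely the asserted two-sided bound.

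There is no real obstacle in this argument; the only points requiring care are the exponent bookkeeping $n^2-n(n-1)=n$ and the observation that the powers of $n$ contributed separately by $|R|$ and $|\partial R|$ cancel in the ratio, so that the entire $n$-dependence of the lemma is produced by the single Cauchy--Schwarz step.
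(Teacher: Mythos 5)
Your proposal is correct and follows essentially the same route as the paper: a direct computation of $|R|$ and $|\partial R|$ (with the powers of $n$ from the side lengths $2a_i/\sqrt n$ cancelling in the scale-invariant ratio) followed by the comparison $\|x\|_2\le\|x\|_1\le\sqrt n\,\|x\|_2$. The only cosmetic difference is that you apply this norm comparison to the vector $(1/a_1,\dots,1/a_n)$ after factoring out $a_1\cdots a_n$, whereas the paper applies it to the complementary products $a_1\cdots\hat{a_i}\cdots a_n$, which is the same inequality up to that common factor.
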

\begin{proof}
    This is just a direct computation: 
    \begin{align*}
        \frac{|\partial R|^n}{|R|^{n-1}}&=\frac{2^n\Big(\displaystyle\sum_{i=1}^n a_1 \dots \hat{a_i}\dots a_n\Big)^n}{(a_1\dots a_n)^{n-1}},
    \end{align*}
    and 
    \begin{align*}
        a_1\dots a_n\rho^n &= a_1\dots a_n\Bigg(\frac{\displaystyle\sum_{i=1}^n(a_1\dots \hat{a_i}\dots a_n)^2}{(a_1\dots a_n)^2}\Bigg)^{n/2}\\
        &\leq \frac{1}{(a_1\dots a_n)^{n-1}}\Big(\sum_{i=1}^na_1\dots \hat{a_i}\dots a_n\Big)^n\\
        &= 2^{-n}\cdot\frac{|\partial R|^n}{|R|^{n-1}},
    \end{align*}
    where we have applied the inequality $\sqrt{\sum x_i^2}\leq \sum |x_i|$. Applying the reverse inequality $\sqrt{\sum x_i^2}\geq \frac{1}{\sqrt{n}}\sum |x_i|$ gives the other estimate: 
    \begin{align*}
        a_1\dots a_n\rho^n &= a_1\dots a_n\Bigg(\frac{\displaystyle\sum_{i=1}^n(a_1\dots \hat{a_i}\dots a_n)^2}{(a_1\dots a_n)^2}\Bigg)^{n/2}\\
        &\geq \frac{1}{(a_1\dots a_n)^{n-1}}\Big(\frac{1}{\sqrt{n}}\sum_{i=1}^n a_1\dots \hat{a_i}\dots a_n\Big)^n\\
        &\geq 2^{-n}\cdot n^{-n/2}\cdot \frac{|\partial R|^n}{|R|^{n-1}}.
    \end{align*}
\end{proof}

\begin{lem}\label{isoperimetricbounds}
    The isoperimetric ratios of $\Omega$ and $R$ are related by 
    $$n^{-3n(n-1)/2}\cdot \frac{|\partial R|^{n}}{|R|^{n-1}}\leq \frac{|\partial\Omega|^n}{|\Omega|^{n-1}}\leq n^{3n(n-1)/2}\cdot \frac{|\partial R|^n}{|R|^{n-1}}.$$
\end{lem}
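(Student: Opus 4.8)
The plan is to compare both the numerator $|\partial\Omega|^n$ and the denominator $|\Omega|^{n-1}$ to the corresponding quantities for the rectangular prisms $R$ and $Q$ from Proposition~\ref{existrectangles}, and then collect the powers of $n$ that arise. Recall $R\subseteq\Omega\subseteq Q$, where $R=\prod_j\bigl[-\tfrac{a_j}{\sqrt n},\tfrac{a_j}{\sqrt n}\bigr]$ and $Q=\prod_j\bigl[-a_jn,a_jn\bigr]$; in particular $Q$ is obtained from $R$ by dilating by the factor $n^{3/2}$. For the volumes, nesting gives $|R|\le|\Omega|\le|Q|=n^{3n/2}|R|$ directly, so $|\Omega|^{n-1}$ is pinched between $|R|^{n-1}$ and $n^{3n(n-1)/2}|R|^{n-1}$. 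For the perimeters, the monotonicity of surface area under inclusion of convex bodies (Proposition~3.1 of \cite{lecturesconvexgeo}, already invoked in the proof of Lemma~\ref{crosssectionperimeter}) gives $|\partial R|\le|\partial\Omega|\le|\partial Q|=n^{3(n-1)/2}|\partial R|$, since scaling an $(n-1)$-dimensional surface area by a linear dilation of factor $n^{3/2}$ multiplies it by $(n^{3/2})^{n-1}$. Hence $|\partial\Omega|^n$ is pinched between $|\partial R|^n$ and $n^{3n(n-1)/2}|\partial R|^n$.

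First I would record these two two-sided comparisons explicitly. Then, to get the upper bound on $\tfrac{|\partial\Omega|^n}{|\Omega|^{n-1}}$, I use the largest value of the numerator and the smallest of the denominator:
\begin{align*}
\frac{|\partial\Omega|^n}{|\Omega|^{n-1}}\le \frac{|\partial Q|^n}{|R|^{n-1}}=\frac{\bigl(n^{3(n-1)/2}|\partial R|\bigr)^n}{|R|^{n-1}}=n^{3n(n-1)/2}\cdot\frac{|\partial R|^n}{|R|^{n-1}}.
\end{align*}
Symmetrically, for the lower bound I use the smallest numerator ($|\partial R|^n$, via $R\subseteq\Omega$) and the largest denominator ($|Q|^{n-1}=n^{3n(n-1)/2}|R|^{n-1}$, via $\Omega\subseteq Q$):
\begin{align*}
\frac{|\partial\Omega|^n}{|\Omega|^{n-1}}\ge\frac{|\partial R|^n}{|Q|^{n-1}}=\frac{|\partial R|^n}{n^{3n(n-1)/2}|R|^{n-1}}=n^{-3n(n-1)/2}\cdot\frac{|\partial R|^n}{|R|^{n-1}}.
\end{align*}
This yields exactly the claimed inequalities.

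There is no real obstacle here; the lemma is essentially bookkeeping once Proposition~\ref{existrectangles} and the surface-area monotonicity of nested convex bodies are in hand. The only point requiring a moment of care is the exponent arithmetic: a linear dilation by $\lambda=n^{3/2}$ multiplies $n$-dimensional volume by $\lambda^n=n^{3n/2}$ and $(n-1)$-dimensional surface area by $\lambda^{n-1}=n^{3(n-1)/2}$, so that after raising the perimeter to the $n$th power and the volume to the $(n-1)$st power both the best and worst cases contribute the same factor $n^{3n(n-1)/2}$, which is what makes the two-sided bound symmetric. I would also note in passing that the argument only uses $R\subseteq\Omega\subseteq Q$ together with $Q=n^{3/2}\cdot R$ (up to translation), so it does not matter that $\Omega$ itself need not be a prism.
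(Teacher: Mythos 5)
Your proposal is correct and follows essentially the same route as the paper: both use the monotonicity of volume and surface area for the nested convex bodies $R\subseteq\Omega\subseteq Q$ (Proposition 3.1 of \cite{lecturesconvexgeo}) together with the fact that $Q=n^{3/2}\cdot R$, and then pair the extreme numerator with the extreme denominator to get each side of the bound. The exponent bookkeeping in your write-up matches the paper's computation exactly.
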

\begin{proof}
    Recall that, by Proposition 3.1 \cite{lecturesconvexgeo}, the inclusions $R\subseteq \Omega\subseteq Q$ imply that $|R|\leq |\Omega|\leq |Q|$ and $|\partial R|\leq |\partial \Omega|\leq |\partial Q|$. Note also that $Q=n^{3/2}\cdot R$. Thus, we have $$\frac{|\partial\Omega|^n}{|\Omega|^{n-1}}\leq \frac{|\partial Q|^n}{|R|^{n-1}}=n^{3n(n-1)/2}\cdot \frac{|\partial R|^n}{|R|^{n-1}},$$ and the other inequality is deduced similarly. 
\end{proof}

\section{Proof of Theorem \ref{mainthmconvex}}\label{mainproofsection}

\begin{proof}[Proof of Theorem \ref{mainthmconvex}]
    Let $R\subseteq \Omega\subseteq Q$ be as in Assumption \ref{orientation} and Proposition \ref{existrectangles}. Using Lemma \ref{isoperimetricbounds}, we need only estimate $N(\Omega)$ with respect to the isoperimetric ratio for $R$. 
    By Theorem \ref{funanoestimate}, there exists $E>0$ depending only on $n$ such that for each $j\geq 1$,
    \begin{equation}\label{neuineq}
        \frac{1}{E\cdot n^2 }\cdot \mu_j(Q)\leq \mu_j(\Omega)\leq E\cdot n^2\cdot \mu_j(R),
    \end{equation}
    and we may take $E=92^2$.
    Let $\rho$ be as in Lemma \ref{rectanglequantities}. By Dirichlet domain monotonicity, we have 
    \begin{equation}\label{dirineq}
        \frac{\pi^2}{4n^2}\cdot \rho^2=\lambda_1(Q)\leq \lambda_1(\Omega)\leq \lambda_1(R)=\frac{n\pi^2}{4}\cdot \rho^2.
    \end{equation}
    Therefore,
    \begin{align*}
        N(\Omega)&\geq \#\Big\{j\mid E\cdot n^2 \cdot \mu_j(R)\leq \frac{\pi^2}{4n^2}\cdot \rho^2\Big\}\\
        &= \#\Big\{(j_1,\dots ,j_n)\in \Zbb_{\geq 0}^n\mid E\cdot n^2\cdot \frac{n\pi^2}{4}\Big(\frac{j_1^2}{a_1^2}+\dots +\frac{j_n^2}{a_n^2}\Big)\leq \frac{\pi^2}{4n^2}\cdot \rho^2\Big\}  \\
        &= \#\Big\{(j_1,\dots ,j_n)\in \Zbb_{\geq 0}^n\mid \frac{j_1^2}{a_1^2}+\dots +\frac{j_n^2}{a_n^2}\leq \frac{1}{E\cdot n^5}\cdot \rho^2\Big\}
    \end{align*}
    In other words, $N(\Omega)$ is bounded below by the number of non-negative integer points in the ellipsoid $\mathcal{E}$ centered at the origin and with axes of lengths $\frac{a_i\rho}{\sqrt{E\cdot n^5}}$. Let $T$ denote the union of closed unit cuboids in $\Rbb^n$ whose faces are parallel to the coordinate hyperplanes and whose vertices with smallest coordinates are located at the non-negative integer points of $\mathcal{E}$. Then the number of these points equals the volume $|T|$ of $T$. For any $(x_1,\dots ,x_n)\in \mathcal{E}$, the point $(\lfloor x_1\rfloor,\dots ,\lfloor x_n\rfloor)$ is a non-negative integer point of $\mathcal{E}$, so $(x_1,\dots , x_n)\in T$. Hence, using Lemma \ref{rectanglequantities},
    \begin{align*}
        N(\Omega)\geq |T|\geq |\mathcal{E}|= \frac{\omega_n}{(E\cdot n^5)^{n/2}}a_1\dots a_n\rho^n\geq \frac{\omega_n}{(E\cdot n^5)^{n/2}}\cdot 2^{-n}\cdot n^{-n/2}\cdot \frac{|\partial R|^n}{|R|^{n-1}}.
    \end{align*}
    Then Lemma \ref{isoperimetricbounds} gives the final lower bound on $N(\Omega)$.\\
    \indent For the upper bound on $N(\Omega)$, by (\ref{neuineq}) and (\ref{dirineq}), we have 
    \begin{align*}
        N(\Omega)&\leq \#\Big\{j\mid \frac{1}{E\cdot n^2}\cdot \mu_j(Q)\leq \frac{n\pi^2}{4}\cdot \rho^2\Big\}\\
        &=\#\Big\{(j_1,\dots ,j_n)\in\Zbb_{\geq 0}^n\mid \frac{1}{E\cdot n^2}\cdot \frac{\pi^2}{4n^2}\cdot\Big(\frac{j_1^2}{a_1^2}+\dots +\frac{j_n^2}{a_n^2}\Big)\leq \frac{n\pi^2}{4}\cdot \rho^2\Big\}\\
        &=\#\Big\{(j_1,\dots ,j_n)\in\Zbb_{\geq 0}^n\mid \frac{j_1^2}{a_1^2}+\dots +\frac{j_n^2}{a_n^2}\leq E\cdot n^5\cdot \rho^2\Big\}.
    \end{align*}
    Therefore, $N(\Omega)$ is bounded above by the number of non-negative integer points in the ellipsoid $\mathcal{F}$ with axes of lengths $\sqrt{E\cdot n^5}\cdot a_i\rho$. Let $S$ equal the union of closed unit cuboids with lowest vertex at the non-negative integer points of $\mathcal{F}$. Then $S$ is contained in the rectangular prism $$K:=\Big[0,\big\lceil \sqrt{E\cdot n^5}\cdot a_1\rho\big\rceil\Big]\times\dots \times\Big[0,\big\lceil \sqrt{E\cdot n^5}\cdot a_n\rho\big\rceil\Big].$$ For each $i$, $\sqrt{E\cdot n^5}\cdot a_i\rho>1$, so $\lceil \sqrt{E\cdot n^5}\cdot a_i\rho\rceil \leq 2\sqrt{E\cdot n^5}\cdot a_i\rho$. Thus, using Lemma \ref{rectanglequantities} again,
    $$N(\Omega)\leq |S|\leq |K|\leq 2^n\cdot E^{n/2}\cdot n^{5n/2}\cdot a_1\dots a_n\rho^n\leq 2^n\cdot E^{n/2}\cdot n^{5n/2}\cdot 2^{-n}\cdot \frac{|\partial R|^n}{|R|^{n-1}},$$ and the upper bound on $N(\Omega)$ follows from Lemma \ref{isoperimetricbounds}.
\end{proof}

\section{Non-convex counterexamples}\label{counterexsection}

We present in this section a series of counterexamples showing that neither Theorem \ref{mainthmconvex} nor Corollary \ref{crosssectionest} can be extended to non-convex domains. For simplicity of exposition, the counterexamples we give are all in dimension $2$ but can be extended without much extra effort to higher dimensions.

\begin{thm}\label{counterexs}
    For each $M>0$, there exists a bounded Lipschitz domain $\Omega\subseteq \Rbb^2$ such that $$\frac{|\partial\Omega|}{\sqrt{|\Omega|}}\geq M\;\;\text{and}\;\;N(\Omega)=3.$$
\end{thm}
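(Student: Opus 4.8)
The goal is to construct, for each $M>0$, a non-convex planar domain with arbitrarily large isoperimetric ratio but with $N(\Omega)$ fixed at the minimal value $3$. The plan is to take a ``starfish'' or ``comb'' domain: a central region (say a unit disk, or a fixed square) with a large number $k$ of very thin tentacles/spikes attached to it. Each spike contributes a large amount of perimeter (its two long sides) while contributing a negligible amount of area, so by taking $k$ large and the spikes long and thin we can force $|\partial\Omega|/\sqrt{|\Omega|}\geq M$. The subtle point is that we must simultaneously keep $N(\Omega)=3$, i.e. keep $\lambda_1(\Omega)$ small relative to $\mu_4(\Omega)$, or rather keep exactly three Neumann eigenvalues at or below $\lambda_1(\Omega)$.

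\textbf{Key steps.} First I would fix the central piece $\Omega_0$ (e.g. a disk of radius $1$) and design the spikes so that they are \emph{short and thin but numerous}: attach $k$ rectangular spikes of length $\ell$ and width $w$ around the boundary of $\Omega_0$, arranged symmetrically. Then:
\begin{enumerate}
    \item \textbf{Isoperimetric ratio.} Compute $|\Omega|=|\Omega_0|+O(k\ell w)$ and $|\partial\Omega|\geq 2k\ell - O(kw)$ (the long sides of the spikes). Choosing $w$ small enough relative to $k$ and $\ell$, one gets $|\partial\Omega|/\sqrt{|\Omega|}\gtrsim k\ell/\sqrt{|\Omega_0|}$, which exceeds $M$ once $k\ell$ is large. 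So the first requirement is easy to satisfy.
    \item \textbf{Upper bound $\lambda_1(\Omega)$.} Since $\Omega\supseteq\Omega_0$, Dirichlet domain monotonicity gives $\lambda_1(\Omega)\leq\lambda_1(\Omega_0)$, a fixed constant. So it suffices to show $\mu_4(\Omega)>\lambda_1(\Omega_0)$, which would give $N(\Omega)\leq 3$; P\'olya's theorem gives $N(\Omega)\geq 2$, and one extra easy test function (using the symmetry, or the fact that the domain is ``two-dimensional'' in the right sense) pins it at exactly $3$. Actually, the cleanest route: show $\mu_4(\Omega)\geq c$ for a constant $c$ independent of the construction, with $c>\lambda_1(\Omega_0)$, then separately exhibit three Neumann eigenvalues below $\lambda_1(\Omega)$.
    \item \textbf{Lower bound on $\mu_4(\Omega)$.} This is the heart of the matter. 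The danger is that thin spikes create many low Neumann eigenvalues (each long thin rectangle, with Neumann conditions on its long sides, has eigenvalues $(\pi j/\ell)^2$, which are small when $\ell$ is large). To prevent this, I would keep the spike \emph{length} $\ell$ bounded (say $\ell\leq L_0$ for a fixed $L_0$) and only let the \emph{number} $k$ grow and the \emph{width} $w\to 0$. Then each spike, being a thin rectangle of bounded length, has a spectral gap: its first nonzero Neumann eigenvalue is $\gtrsim \min(1/\ell^2, 1/w^2)\gtrsim 1/L_0^2$. A Dirichlet–Neumann bracketing argument (cut $\Omega$ along the $k$ segments where the spikes meet $\Omega_0$, imposing Neumann conditions) shows $\mu_j(\Omega)\geq$ the $j$-th eigenvalue of the disjoint union of $\Omega_0$ (with mixed conditions) and the $k$ spikes (with Neumann on the outer sides, Neumann on the cut). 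The low eigenvalues of this disjoint union are: one zero mode from $\Omega_0$, $k$ near-zero modes (the constants) from the $k$ spikes... wait, that's too many.

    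The correct fix is to \emph{impose Dirichlet conditions on the cuts} (lower bound the other direction won't work) — rather, use the bracketing that cuts and imposes \emph{Dirichlet} conditions on the interface to bound $\mu_j$ from \emph{below} is false; Dirichlet cutting bounds from below only after adding. Let me instead argue directly: use the variational characterization and show any function $u$ with $\int u=0$ and small Rayleigh quotient must be nearly constant on each spike (by the spike's own Poincar\'e inequality with constant $\sim L_0^2$) and nearly constant on $\Omega_0$; matching across the thin necks forces the constants to agree in the limit $w\to0$, so the space of such functions is effectively $1$-dimensional, forcing $\mu_2(\Omega)$ — no. We want three small eigenvalues and $\mu_4$ large. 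So the central piece should be chosen so that $\mu_2(\Omega_0),\mu_3(\Omega_0)$ are already below $\lambda_1(\Omega_0)$ while $\mu_4(\Omega_0)>\lambda_1(\Omega_0)$: e.g. an appropriately shaped rectangle does this (P\'olya-type). Then the perturbation argument (spikes of bounded length, $w\to 0$) shows $\mu_j(\Omega)\to\mu_j(\Omega_0)$ for each fixed $j$, and $\lambda_1(\Omega)\to\lambda_1(\Omega_0)$, so for $w$ small $N(\Omega)=N(\Omega_0)=3$.
\end{enumerate}

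\textbf{Main obstacle.} The real difficulty, and the step requiring the most care, is the spectral stability claim $\mu_j(\Omega)\to\mu_j(\Omega_0)$ (for each fixed $j$) as the spikes shrink in width, \emph{together with} the lower bound ruling out the emergence of spurious small eigenvalues from the $k$ thin spikes. This is exactly where ``thin tentacles'' are treacherous for Neumann problems (the classical Courant–Hilbert / Arrieta–Hale–Raugel phenomenon). The key quantitative input is that each spike is a Lipschitz domain of bounded diameter, hence has a Neumann–Poincar\'e constant bounded by $C L_0^2$ uniformly in $w$, and the trace of an $H^1(\Omega)$ function on the thin neck is controlled so that decoupling the spikes costs only $O(k w)\to 0$ in the relevant quadratic forms; a Dirichlet–Neumann bracketing on the necks then sandwiches $\mu_j(\Omega)$ between quantities converging to $\mu_j(\Omega_0)$. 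Once this stability is established, choosing $\Omega_0$ with $N(\Omega_0)=3$ and a strict gap $\mu_3(\Omega_0)<\lambda_1(\Omega_0)<\mu_4(\Omega_0)$, and then letting $k\to\infty$ and $w\to 0$ appropriately (depending on $M$ and on the gap), yields a domain with $|\partial\Omega|/\sqrt{|\Omega|}\geq M$ and $N(\Omega)=3$.
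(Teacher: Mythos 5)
Your overall strategy (inflate the perimeter with thin protrusions of vanishing width while keeping the spectrum close to that of a fixed domain with $N=3$) is the same idea as the paper, but your specific construction and, more importantly, your justification of the spectral stability step have a genuine gap. The paper's protrusions are oscillations whose \emph{amplitude} (the extent transverse to the attachment interface) tends to zero; this is what makes the mixed Dirichlet--Neumann eigenvalue of the attached region $P_k=\Omega_k\setminus B$ blow up (a one-line Poincar\'e estimate in the thin direction), and then Arrieta--Carvalho gives convergence of all Neumann eigenvalues to those of the disk. Your spikes instead have \emph{fixed} length $\ell\le L_0$ and only the width $w\to 0$. In that regime the spectrum does \emph{not} converge to that of $\Omega_0$: each spike contributes, in the limit, the eigenvalues of the one-dimensional problem on $[0,\ell]$ with Dirichlet condition at the junction and Neumann at the free end, i.e. values near $((j-\tfrac12)\pi/\ell)^2$. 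Concretely, the function $\sin(\pi t/2\ell)$ along a spike (extended by $0$ into $\Omega_0$, mean-corrected) has Rayleigh quotient about $(\pi/2\ell)^2$ uniformly in $w$, so with $k$ spikes you get at least $k$ Neumann eigenvalues of size about $(\pi/2\ell)^2$. If $(\pi/2\ell)^2<\lambda_1(\Omega_0)$ this forces $N(\Omega)\ge k$, destroying the construction --- this is exactly the mechanism behind the paper's Theorem 4.8 (rooms and passages). So your claim that ``the perturbation argument shows $\mu_j(\Omega)\to\mu_j(\Omega_0)$ for each fixed $j$'' is false as stated, and the construction only has a chance if you add the missing hypothesis that the spike length is short relative to $\lambda_1(\Omega_0)^{-1/2}$, e.g. $(\pi/2\ell)^2>\lambda_1(\Omega_0)$ with some margin.

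Even granting that extra hypothesis, the argument you offer for the lower bound on $\mu_4(\Omega)$ does not close. Neumann bracketing along the necks produces $k+1$ zero modes (you notice this yourself), and the patch you sketch --- the spike's Neumann--Poincar\'e constant $\lesssim L_0^2$ plus an $O(kw)$ decoupling cost, ``sandwiching $\mu_j(\Omega)$ between quantities converging to $\mu_j(\Omega_0)$'' --- controls only the nearly constant functions on each spike; the dangerous competitors are precisely the spike modes vanishing at the junction, which are not near-constant and are invisible to that estimate. What is actually needed is a quantitative statement that no eigenvalue below the energy window $[0,\lambda_1(\Omega_0)+\delta]$ can emerge from the attached region, i.e. that the first \emph{mixed} eigenvalue of each spike (Dirichlet on the junction segment, Neumann elsewhere), which is about $(\pi/2\ell)^2+O(1)$ and does \emph{not} tend to infinity, exceeds $\lambda_1(\Omega_0)$; one then needs an Arrieta--Carvalho-type spectral convergence argument (as in the paper's Lemma 5.2, or its refined version allowing channel modes above the window) to conclude $N(\Omega)=N(\Omega_0)=3$ for $w$ small, for each fixed $k$. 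As written, your proposal neither imposes the smallness of $\ell$ nor supplies this lower-bound mechanism, so the central step is unproven; the paper's choice of protrusions with vanishing transverse extent is precisely what lets it avoid this difficulty.
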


The construction used to prove Theorem \ref{counterexs} is fairly simple: we consider a sequence of domains that converge in Hausdorff distance to the unit disk while their isoperimetric ratios tend to infinity. Using standard techniques in eigenvalue perturbation theory, we show that the eigenvalues of the sequence of domains tend to those of the disk. \\
Let $B$ be the unit ball in $\Rbb^2$ centered at the origin. Let $g(x)=\sqrt{1-x^2}$, and for each $n\geq 1$, let $f_k(x)=\frac{1}{k}\Big(1+\cos(k^2\pi x)\Big)$. Define $$\Omega_k:=\{(x,y)\in\Rbb^2\mid -1<x<1, -g(x)<y<g(x)+f_k(x)\}.$$ See Figure \ref{spikydisk}. 
\begin{figure}
    \centering
    \includegraphics[width=0.35\linewidth]{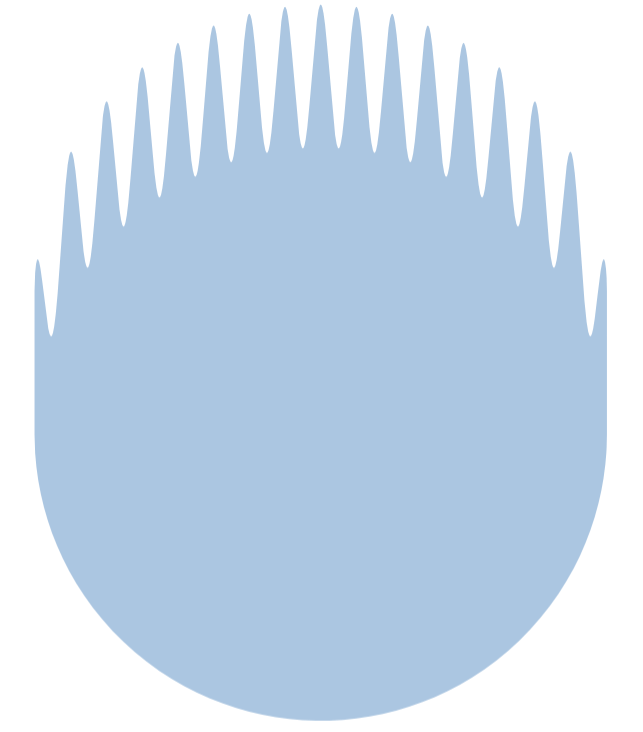}
    \caption{The domain $\Omega_k$ constructed in the proof of Theorem \ref{counterexs} with $k=4$. Illustrated in Desmos (2025).}
    \label{spikydisk}
\end{figure}
\begin{lem}\label{neumannconverge}
    For each $j$, we have $$\lim_{k\to\infty}\mu_j(\Omega_k)=\mu_j(B).$$
\end{lem}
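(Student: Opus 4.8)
The plan is to prove convergence of the Neumann eigenvalues via the variational (min--max) characterization, controlling the Rayleigh quotients on $\Omega_k$ against those on the disk $B$ in both directions. The key structural fact is that $\Omega_k$ is obtained from $B$ by attaching a thin ``comb'' of amplitude $O(1/k)$ along the arc $\{y=g(x)\}$, so $|\Omega_k \triangle B| \to 0$ and $|\Omega_k| \to |B| = \pi$, while the spikes themselves have width $O(1/k^2)$ and height $O(1/k)$.

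First I would prove $\limsup_{k\to\infty}\mu_j(\Omega_k)\le\mu_j(B)$. Take the first $j$ Neumann eigenfunctions $u_1,\dots,u_j$ of $B$; since $B$ is a disk, these are smooth up to $\partial B$, hence extend to Lipschitz (indeed $C^1$) functions on a neighborhood of $\overline{B}$ in $\Rbb^2$, and in particular restrict to admissible test functions on $\Omega_k$. Feeding $\mathrm{span}\{u_1|_{\Omega_k},\dots,u_j|_{\Omega_k\}}$ into the min--max for $\mu_j(\Omega_k)$, one must check that the Dirichlet and $L^2$ inner products $\int_{\Omega_k}\nabla u_i\cdot\nabla u_\ell$ and $\int_{\Omega_k}u_i u_\ell$ converge to their counterparts on $B$. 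This follows because the symmetric difference $\Omega_k\triangle B$ is contained in the region $\{g(x)<y<g(x)+f_k(x)\}$, which has area $\le \frac{2}{k}\to0$, and $\nabla u_i, u_i$ are bounded there; hence the Gram matrices converge and $\limsup_k \mu_j(\Omega_k)\le \mu_j(B)$, after noting the test space is eventually $j$-dimensional.

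The harder direction is $\liminf_{k\to\infty}\mu_j(\Omega_k)\ge\mu_j(B)$, and this is where I expect the main obstacle to lie: I would use an \emph{extension operator} argument. The issue is that $\Omega_k$ does not sit inside $B$, so one cannot simply restrict test functions the other way; instead, given (nearly optimal) test functions $v_1,\dots,v_j$ on $\Omega_k$, I want to transplant them to $B$ with controlled loss in the Rayleigh quotient. The clean way is to build a uniformly bounded linear extension operator $\mathcal{E}_k:H^1(\Omega_k)\to H^1(\Rbb^2)$ with operator norm bounded independently of $k$ — this is the technical heart, and it is plausible precisely because the spikes, though numerous, are uniformly Lipschitz graphs with slopes $\lesssim k^2\cdot\frac1k = k$... which is \emph{not} uniformly bounded. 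So the naive graph-extension will not have uniform norm, and one must be more careful: I would instead exploit that each spike is a thin cap and use a reflection/averaging extension across the base arc $\{y=g(x)\}$, absorbing the spike into a boundary collar of $B$; alternatively, argue directly that for $v\in H^1(\Omega_k)$ the portion of $\|\nabla v\|^2$ living over the spikes is negligible after a Poincaré estimate on each thin spike (each spike, having diameter $O(1/k)$, satisfies $\|v-\bar v_{\text{spike}}\|_{L^2}\le \frac{C}{k}\|\nabla v\|_{L^2}$), so $v$ restricted to $\Omega_k\cap B$ is close in $H^1$ to an $H^1(B)$ function. Combining this with the upper bound and the fact that eigenvalues vary continuously when the form domains converge in this quantitative sense yields $\mu_j(\Omega_k)\to\mu_j(B)$.

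A cleaner packaging of the lower bound, which I would try first, is to cite a Mosco-convergence / $\gamma$-convergence criterion: the forms $a_k(u,v)=\int_{\Omega_k}\nabla u\cdot\nabla v$ Mosco-converge to $a(u,v)=\int_B\nabla u\cdot\nabla v$ provided (i) any $H^1(\Omega_k)$-bounded sequence has a subsequence converging in $L^2_{\mathrm{loc}}$ to an $H^1(B)$ function with lower-semicontinuous energy, and (ii) every $H^1(B)$ function is the $L^2$-limit of functions on $\Omega_k$ with converging energy; (ii) is the $\limsup$ direction above, and (i) reduces to the same spike-Poincaré estimate. Mosco convergence of the forms, together with the uniform volume bound $|\Omega_k|\to\pi$, gives norm-resolvent convergence of the Neumann Laplacians and hence $\mu_j(\Omega_k)\to\mu_j(B)$ for every fixed $j$. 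I would present whichever of the two routes — direct min--max with an extension operator, or Mosco convergence — turns out to require less machinery to state self-containedly; in either case the one genuinely delicate point is controlling $H^1$ mass over the high-frequency spikes, handled by a Poincaré inequality on each $O(1/k)$-small spike.
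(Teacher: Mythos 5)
Your upper-bound direction is fine and matches what any proof here must do. But the hard direction, as you sketch it, has a concrete gap. First, note that no extension operator is needed at all: by construction $B\subseteq\Omega_k$ (the spikes are added on top of the upper arc, $f_k\geq 0$), so the natural map is simply restriction of $\Omega_k$-eigenfunctions to $B$, and the only thing to prove is that no $L^2$ mass concentrates in the spike region $P_k=\Omega_k\setminus B$. Your proposed tool for this --- a Neumann--Poincar\'e inequality on each spike, $\|v-\bar v_{\mathrm{spike}}\|_{L^2}\leq \frac{C}{k}\|\nabla v\|_{L^2}$ --- controls only the oscillation of $v$ within each spike, not the averages $\bar v_{\mathrm{spike}}$ themselves, and hence does not rule out a sequence of functions with bounded Rayleigh quotient whose $L^2$ mass sits in the spikes (roughly constant on each spike, decaying in a collar just inside $B$). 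To exclude that, you must couple the spike to the disk \emph{across the interface} $\{y=g(x)\}$: either a trace/collar estimate comparing spike values with values just inside $B$, or equivalently the statement that the first eigenvalue of the mixed problem on $P_k$ (Dirichlet on $\partial P_k\cap\partial B$, Neumann on the rest) tends to infinity. Your Mosco-convergence packaging has the same hole: condition (i) again reduces to this interface estimate, and the volume convergence $|\Omega_k|\to\pi$ by itself does not upgrade strong resolvent convergence to convergence of eigenvalues; one needs a uniform compactness statement, which is again supplied by the interface estimate.

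This is exactly where the paper's proof goes, and why it is short: it cites Proposition 2.3 of Arrieta--Carvalho, which says precisely that if the first mixed (Dirichlet-on-interface) eigenvalue $\nu_1(P_k)$ of the attached region diverges, then the Neumann eigenvalues of $\Omega_k$ converge to those of $B$; and it proves $\nu_1(P_k)\to\infty$ by the elementary vertical estimate $|u(x,y)|^2\leq f_k(x)\int_{g(x)}^{g(x)+f_k(x)}|\nabla u(x,s)|^2\,ds$, available because $u$ vanishes on the interface and the spikes have height $f_k\leq 2/k\to 0$ (their width and the slope $\sim k$ of their boundary are irrelevant to this estimate --- which also answers your worry about non-uniform Lipschitz constants). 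So your plan is salvageable, but the genuinely delicate point is not a per-spike Poincar\'e inequality; it is the Dirichlet-type estimate on $P_k$ relative to the interface, plus a perturbation result (Arrieta--Carvalho, or an equivalent hand-made collar argument) converting it into spectral convergence.
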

\begin{proof}
    For each $k$, define $P_k=\Omega_k\setminus B$. By Proposition 2.3 of Arrieta-Carvalho \cite{arrieta}, it suffices to prove that the following sequence of eigenvalue problems has first eigenvalue tending to $\infty$ as $k\to\infty$:
    \begin{equation}\label{mixedproblem}
        \begin{cases}
            -\Delta u=\lambda u\;&\text{in}\;\;P_k\\
            u=0\;&\text{in}\;\; \partial P_k\cap \partial B\\
            \partial_{\nu}u=0&\text{in} \;\;\partial P_k\setminus\partial B.
        \end{cases}
    \end{equation}
    Let $\nu_1(P_k)$ denote the first eigenvalue of this problem. Let $H^1_0(P_k)$ denote the Sobolev space of functions in $L^2(P_k)$ whose gradients are in $L^2(P_k)$ and whose traces vanish on $\partial P_k\cap \partial B$. The variational characterization of the eigenvalue problem (\ref{mixedproblem}) is $$\nu_1(P_k)=\min_{u\in H^1_0(P_k)\setminus\{0\}}\frac{\int_{P_k}|\nabla u|^2}{\int_{P_k}|u|^2}.$$ Let $u\in C^1(P_k)\cap H^1_0(P_k)$. Then for any $(x,y)\in P_k$, $$|u(x,y)|^2=\Big|\int_{g(x)}^yu_y(x,s)ds\Big|^2\leq f_k(x)\int_{g(x)}^{g(x)+f_k(x)}|\nabla u(x,s)|^2ds,$$ implying that $$\int_{P_k}|u|^2\leq \Big(\sup_{-1<x<1}f_k(x)^2\Big)\int_{P_k}|\nabla u|^2.$$ By the density of $C^1(P_k)\cap H^1_0(P_k)$ in $H^1_0(P_k)$, we get the lower bound $$\nu_1(P_k)\geq \Big(\sup_{-1<x<1}f_k(x)^2\Big)^{-1}=\frac{k}{2}\to\infty\;\;\text{as}\;\;k\to\infty.$$
\end{proof}

That $N(\Omega_k)\to 3$ as $k\to\infty$ quickly follows:

\begin{proof}[Proof of Theorem \ref{counterexs}]
    We will show that for the $\Omega_k$ constructed above, $N(\Omega_k)=3$ for $k$ sufficiently large, and then we will show that the isoperimetric ratio of $\Omega_k$ tends to infinity as $k$ does. By explicit computation, one can show that $N(B)=3$. By Theorem 1.5 of Rauch-Taylor \cite{rauchtaylor}, the first Dirichlet eigenvalue of $\Omega_k$ tends to the first Dirichlet eigenvalue of $B$ as $k\to\infty$. Combining this with Lemma \ref{neumannconverge}, we have $N(\Omega_k)\to 3$ as $k\to\infty$. \\
    \indent Now note that the area of $\Omega_k$ is bounded above uniformly in $k$, so we need only show that the perimeter of $\Omega_k$ goes to $\infty$ as $k\to\infty$. Since $f_k$ has more than $k^2$ periods contained in the union of intervals $(-1,\epsilon)\cup(\epsilon,1)$ for $\epsilon>0$ small enough and on each of these intervals $g$ is monotonic, the arc length of $g(x)+f_k(x)$ (and hence the perimeter length of $\Omega_k$) is at least $k^2\cdot\frac{2}{k}=2k$.
\end{proof}

\indent In light of Corollary \ref{crosssectionest}, it is natural to wonder, at least in dimension $n=2$, whether there is a lower bound on $N(\Omega)$ in terms of the diameter of $\Omega$.\footnote{We define the \textit{diameter} of $\Omega$ to be the supremum of (Euclidean) distances between pairs of points in $\Omega$.} The next result shows that such a bound also cannot hold. We remark that the sequence of counterexamples we find also gives counterexamples to the isoperimetric lower bound. Let $D(\Omega)$ denote the diameter of $\Omega$.
\begin{thm}\label{diamcounterex}
    There exists a sequence of domains $\Omega_k\subseteq \Rbb^2$ such that $\frac{D(\Omega_k)}{\sqrt{|\Omega_k|}}\to\infty$ while $N(\Omega_k)$ is uniformly bounded above in $k$.
\end{thm}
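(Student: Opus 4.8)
The plan is to build on the construction used in Theorem~\ref{counterexs}, but instead of collapsing the spiky domain onto a fixed disk, I would elongate the base region so that the diameter grows while the area stays bounded and $N$ stays small. Concretely, for a parameter $L>0$ consider a thin rectangle $R_L=(0,L)\times(0,\delta_L)$ with $\delta_L\to 0$ chosen so that $|R_L|=L\delta_L$ remains bounded (say equal to $1$); then $D(R_L)\ge L\to\infty$ while $D(R_L)/\sqrt{|R_L|}\to\infty$. A thin rectangle, however, has large $N$ (it is convex, so Theorem~\ref{mainthmconvex} forces $N$ to grow with the isoperimetric ratio, which blows up). So a straight rectangle is the wrong model: I would instead \emph{bend} or \emph{coil} a thin tube of fixed cross-section back on itself so that its \emph{intrinsic} geometry stays that of a thin tube of bounded length, but its \emph{extrinsic} diameter is large. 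The cleanest realization: take a fixed short thin domain $T=(0,\ell)\times(0,\delta)$ with $\ell,\delta$ chosen so that $N(T)$ is some fixed small number (e.g. $N(T)=2$ by P\'olya if we keep $\ell/\delta$ of order one, or a controlled constant), and then embed an \emph{isometric copy} of a long thin tube into $\Rbb^2$ as a spiral or zig-zag. Because the Laplacian eigenvalues are unchanged under isometries, one cannot literally change the diameter this way in $\Rbb^2$ without self-intersection; the fix is to take a long thin tube of fixed \emph{width} $\delta_k\to 0$ and length $L_k\to\infty$ with $L_k\delta_k\to 0$, arrange it into a long thin ``snake'' inside a bounded box so that $D(\Omega_k)$ stays of order $1$...

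Wait --- that goes the wrong direction. Let me restructure: I want $D$ \emph{large} and area \emph{small}, with $N$ bounded. So the right object is a long, extremely thin, \emph{straight} corridor of width $\delta_k$ and length $L_k$ with $L_k\delta_k\to 0$ and $L_k\to\infty$, but \emph{with a bulb at one end}. The corridor alone is convex with large $N$, so instead I would attach a thin corridor (a ``tentacle'') of length $L_k\to\infty$ and width $\delta_k$ with $\delta_k L_k\to 0$ to a fixed unit disk $B$. Call this $\Omega_k=B\cup C_k$. Then $D(\Omega_k)\ge L_k\to\infty$ and $|\Omega_k|=\pi+\delta_k L_k\to\pi$, so $D(\Omega_k)/\sqrt{|\Omega_k|}\to\infty$. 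The key analytic claim is then that $N(\Omega_k)$ stays bounded: I would show $\mu_j(\Omega_k)\to\mu_j(B)$ and $\lambda_1(\Omega_k)\to\lambda_1(B)$ as $k\to\infty$, so that $N(\Omega_k)\to N(B)=3$ for $k$ large. The Dirichlet convergence follows from domain monotonicity, $\lambda_1(\Omega_k)\le\lambda_1(B)$, together with a lower bound $\lambda_1(\Omega_k)\ge\lambda_1(B)-o(1)$: any Dirichlet eigenfunction on $\Omega_k$ restricted to the thin corridor $C_k$ satisfies a Poincar\'e inequality with constant $\sim\delta_k^{-2}\to\infty$ in the transverse direction, so the corridor contributes negligibly to both the Rayleigh quotient numerator and denominator; alternatively invoke Rauch--Taylor \cite{rauchtaylor} as in Theorem~\ref{counterexs}, since $\Omega_k\to B$ in a suitable sense after noting the corridor has vanishing capacity. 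For the Neumann eigenvalues I would use Arrieta--Carvalho \cite{arrieta} exactly as in Lemma~\ref{neumannconverge}: with $P_k=\Omega_k\setminus B=C_k$, the mixed Dirichlet--Neumann problem on the corridor (Dirichlet on the interface with $B$, Neumann on the rest) has first eigenvalue $\ge c/\delta_k^2$ by the same one-dimensional Poincar\'e argument slicing in the transverse direction, which tends to $\infty$; hence $\mu_j(\Omega_k)\to\mu_j(B)$ for every fixed $j$.

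Carrying this out, the steps in order are: (1) define $\Omega_k=B\cup C_k$ precisely, with $C_k$ a rectangular corridor of width $\delta_k$ and length $L_k$ attached along a short boundary arc of $B$, choosing $\delta_k\to 0$ and $L_k\to\infty$ with $\delta_k L_k\to 0$ (e.g. $\delta_k=k^{-2}$, $L_k=k$); verify $\Omega_k$ is a bounded Lipschitz domain, that $D(\Omega_k)\ge L_k$, and that $|\Omega_k|\to\pi$, giving $D(\Omega_k)/\sqrt{|\Omega_k|}\to\infty$. (2) Prove the transverse Poincar\'e inequality on $C_k$: for $u\in H^1(C_k)$ vanishing on the interface $\Gamma_k=\partial C_k\cap\partial B$ (a segment transverse to the long axis), $\int_{C_k}|u|^2\le C\delta_k^2\int_{C_k}|\nabla u|^2$, by integrating along transverse segments from $\Gamma_k$ --- this is the direct analogue of the estimate in Lemma~\ref{neumannconverge}. (3) Apply Arrieta--Carvalho Proposition~2.3 \cite{arrieta} to conclude $\lim_k\mu_j(\Omega_k)=\mu_j(B)$ for each fixed $j$. (4) Handle the Dirichlet eigenvalue: $\lambda_1(\Omega_k)\le\lambda_1(B)$ by monotonicity, and $\lambda_1(\Omega_k)\ge\lambda_1(B)-o(1)$ via a test-function/capacity argument (or cite Rauch--Taylor \cite{rauchtaylor}), so $\lambda_1(\Omega_k)\to\lambda_1(B)$. (5) Since $N(B)=3$ and none of $\mu_1(B),\mu_2(B),\mu_3(B)$ equals $\lambda_1(B)$ (as $\mu_3(B)<\lambda_1(B)<\mu_4(B)$ --- an explicit Bessel-function computation, already used in the proof of Theorem~\ref{counterexs}), the strict inequalities are stable, so $N(\Omega_k)=3$ for all $k$ large; uniform boundedness follows.

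The main obstacle I expect is step (2)--(3): making the transverse-slice Poincar\'e estimate honest when the corridor meets the disk along a \emph{curved} arc rather than a flat segment, and verifying that the geometric hypotheses of Arrieta--Carvalho (the precise notion in which $P_k$ ``thins out'') are met --- this is entirely analogous to Lemma~\ref{neumannconverge}, but one must be careful that the Dirichlet portion of $\partial P_k$ (the interface) has enough measure and is positioned so that every transverse slice of $C_k$ starts on it, which is why attaching the corridor along a short arc and keeping it straight (rather than coiled) is important. A secondary, purely bookkeeping point is ensuring $\Omega_k$ is genuinely Lipschitz at the two reentrant corners where the corridor joins the disk, which is handled by a small smoothing that does not affect any of the limits. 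The elongated-corridor idea also immediately gives the remark about isoperimetric lower bounds, since $|\partial\Omega_k|\ge 2L_k\to\infty$ while $|\Omega_k|\to\pi$, so the same sequence violates $|\partial\Omega|/\sqrt{\pi|\Omega|}\le N(\Omega)$.
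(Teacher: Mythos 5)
Your construction fails, and the failure is exactly at the point you flagged as the ``main obstacle,'' steps (2)--(3). The transverse Poincar\'e inequality $\int_{C_k}|u|^2\le C\delta_k^2\int_{C_k}|\nabla u|^2$ for $u$ vanishing on the interface $\Gamma_k$ is false: $\Gamma_k$ is a single short segment at \emph{one end} of the corridor, so points deep inside $C_k$ are at distance up to $L_k$ (not $\delta_k$) from the Dirichlet portion, and the slicing argument of Lemma \ref{neumannconverge} does not apply. Concretely, a test function that vanishes near $\Gamma_k$ and equals $1$ on the rest of the corridor, with a transition region of unit length, has Rayleigh quotient of order $1/L_k$; hence the first eigenvalue of the mixed problem on $P_k=C_k$ tends to $0$, not $\infty$, and the hypothesis of Arrieta--Carvalho that you invoke is violated. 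In fact the conclusion you want is false for your domains: this is the classical dumbbell/thin-handle phenomenon, where Neumann spectra are \emph{not} stable under attaching a long thin tentacle. Longitudinal test functions in the corridor (functions of the long coordinate only, cut off near the junction) show $\mu_{j+1}(\Omega_k)\le C(j/L_k)^2$ for $j\lesssim L_k$, while Faber--Krahn and $|\Omega_k|\to\pi$ give $\lambda_1(\Omega_k)\ge c>0$ uniformly. Therefore $N(\Omega_k)\gtrsim L_k\to\infty$: your sequence has \emph{unbounded} $N$, the opposite of what the theorem requires, and your closing remark about violating the isoperimetric lower bound falls with it.

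The paper's proof evades precisely this trap by making the long appendage taper fast: $\Omega_k=\{0<x<k,\ -e^{-x^2}<y<e^{-x^2}\}$, truncations of the unbounded horn $\Omega$ with Gaussian-decaying width. The decay is fast enough that, by the Evans--Harris example, $H^1(\Omega)\hookrightarrow L^2(\Omega)$ is compact and the Neumann spectrum of the unbounded horn is discrete, so the low-frequency longitudinal modes that ruin a constant-width corridor simply do not accumulate near $0$; effectively the shrinking cross-section acts like a confining potential. One then proves $\mu_j(\Omega_k)\to\mu_j(\Omega)$ (Lemmas \ref{subseqconverge} and \ref{evalsconverge}, via uniformly bounded extension operators and the compact embedding), while $\lambda_1(\Omega_k)\le\lambda_1(\Omega_1)$ by Dirichlet monotonicity, and discreteness of the limit spectrum gives the uniform bound on $N(\Omega_k)$. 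If you want to salvage your ``disk plus tentacle'' picture, the tentacle's width must shrink along its length at a rate making the one-dimensional reduced operator have spectrum escaping to infinity (exponential or Gaussian decay works; any width bounded below along a length $L_k\to\infty$ cannot), at which point you have essentially reconstructed the paper's example.
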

\noindent In contrast to Theorem \ref{counterexs}, however, we do not know the best uniform upper bound on $N(\Omega_k)$. An example of the domains we construct in the proof is shown in Figure \ref{gaussfig}.
\begin{figure}
    \centering
    \includegraphics[width=0.55\linewidth]{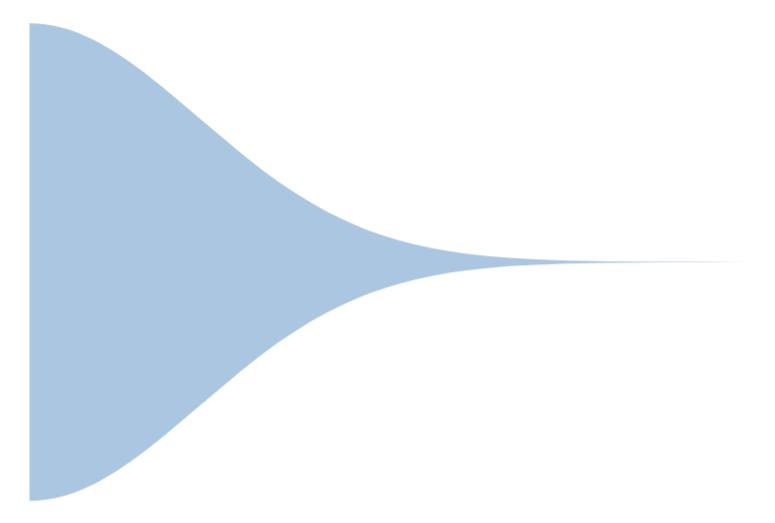}
    \caption{The domain $\Omega_k$ constructed in the proof of Theorem \ref{diamcounterex} with $k=3$. Illustrated in Desmos (2025).}
    \label{gaussfig}
\end{figure}
\begin{proof}[Proof of Theorem \ref{diamcounterex}]
    Define $$\Omega:=\{(x,y)\in \Rbb^2\mid x>0, -e^{-x^2}<y<e^{-x^2}\},$$ and for $k\geq 1$ an integer, define $$\Omega_k:=\{(x,y)\in \Rbb^2\mid 0<x<k, -e^{-x^2}<y<e^{-x^2}\}.$$ Since $\Omega_1\subseteq\Omega_k$ for all $k$, Dirichlet domain monotonicity implies that $\lambda_1(\Omega_k)$ is bounded above by $\lambda_1(\Omega_1)$. By Example 6.3.3 of Evans-Harris \cite{evans}, despite $\Omega$ being unbounded, the Neumann Laplace spectrum of $\Omega$ is discrete and consists only of eigenvalues. In particular, the embedding $H^1(\Omega)\hookrightarrow L^2(\Omega)$ is compact. Moreover, $D(\Omega_k)/\sqrt{|\Omega_k|}\to\infty$ as $k\to \infty$. By Lemma \ref{evalsconverge} below, $\mu_j(\Omega_{k})\to \mu_j(\Omega)$ as $k\to\infty$. By the discreteness of the spectrum of $\Omega$, this completes the proof. 
\end{proof}
A standard technical lemma is needed to prove Lemma \ref{evalsconverge}. 
\begin{lem}\label{subseqconverge}
    For each $k$, suppose that $u_k\in H^1(\Omega_k)$ such that $\|u\|_{L^2(\Omega_k)}=1$ and $\|\nabla u\|_{L^2(\Omega_k)}\leq C$ for some $C>0$. Then there exists $u\in H^1(\Omega)\setminus\{0\}$ and a subsequence $\{u_{k_{\ell}}\}$ such that for any $v\in H^1(\Omega)$, 
    $$\lim_{\ell\to\infty}\int_{\Omega_{k_{\ell}}}\nabla u_{k_{\ell}}\cdot \nabla v=\int_{\Omega} \nabla u\cdot \nabla v.$$
\end{lem}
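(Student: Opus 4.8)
The plan is to prove Lemma \ref{subseqconverge} by a diagonal extraction over an exhaustion of $\Omega$ together with a Fatou-type lower bound that prevents the limit from vanishing. First I would fix an exhaustion $\Omega^{(1)}\subseteq\Omega^{(2)}\subseteq\cdots$ of $\Omega$ by smooth bounded subdomains with $\overline{\Omega^{(i)}}\subseteq\Omega$ and $\bigcup_i\Omega^{(i)}=\Omega$; since $\Omega_k$ exhausts $\Omega$ as well, for each $i$ there is $k(i)$ with $\Omega^{(i)}\subseteq\Omega_k$ for all $k\geq k(i)$. On each fixed $\Omega^{(i)}$ the restrictions $u_k|_{\Omega^{(i)}}$ are bounded in $H^1(\Omega^{(i)})$ uniformly in $k\geq k(i)$ (bound $1$ in $L^2$, bound $C$ on the gradient), so by the Rellich--Kondrachov theorem and reflexivity we may pass to a subsequence converging weakly in $H^1(\Omega^{(i)})$ and strongly in $L^2(\Omega^{(i)})$. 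Diagonalizing over $i$ produces a single subsequence $\{u_{k_\ell}\}$ and a function $u$ on $\Omega$ (defined consistently on each $\Omega^{(i)}$) such that $u_{k_\ell}\to u$ strongly in $L^2_{\mathrm{loc}}(\Omega)$ and $\nabla u_{k_\ell}\rightharpoonup\nabla u$ weakly in $L^2_{\mathrm{loc}}(\Omega)$; lower semicontinuity of the $L^2$ norm under weak limits gives $\int_{\Omega^{(i)}}|\nabla u|^2\leq C^2$ for every $i$, hence $\nabla u\in L^2(\Omega)$, and similarly $\int_{\Omega^{(i)}}|u|^2\leq 1$ gives $u\in L^2(\Omega)$, so $u\in H^1(\Omega)$.

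Next I would establish the weak convergence of the bilinear pairing against a fixed $v\in H^1(\Omega)$. Write $\int_{\Omega_{k_\ell}}\nabla u_{k_\ell}\cdot\nabla v = \int_{\Omega^{(i)}}\nabla u_{k_\ell}\cdot\nabla v + \int_{\Omega_{k_\ell}\setminus\Omega^{(i)}}\nabla u_{k_\ell}\cdot\nabla v$. The first term converges to $\int_{\Omega^{(i)}}\nabla u\cdot\nabla v$ by weak convergence of $\nabla u_{k_\ell}$ in $L^2(\Omega^{(i)})$ (with $\nabla v|_{\Omega^{(i)}}$ a fixed test vector field). For the tail term, Cauchy--Schwarz gives the bound $C\cdot\big(\int_{\Omega\setminus\Omega^{(i)}}|\nabla v|^2\big)^{1/2}$, uniformly in $\ell$, and this goes to $0$ as $i\to\infty$ since $\nabla v\in L^2(\Omega)$; likewise $\int_\Omega\nabla u\cdot\nabla v - \int_{\Omega^{(i)}}\nabla u\cdot\nabla v\to 0$. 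A standard $\varepsilon/3$ argument (choose $i$ large to control both tails, then $\ell$ large to control the $\Omega^{(i)}$ term) yields $\lim_\ell\int_{\Omega_{k_\ell}}\nabla u_{k_\ell}\cdot\nabla v = \int_\Omega\nabla u\cdot\nabla v$.

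The one genuinely delicate point — and the main obstacle — is showing $u\not\equiv 0$, i.e. that no mass escapes to infinity in the $x$-direction. This is exactly where the specific geometry of $\Omega=\{x>0,\ |y|<e^{-x^2}\}$ enters: because the ``width'' $2e^{-x^2}$ decays superexponentially, a one-dimensional Poincaré inequality in the $y$-variable on the slab $\{x>R\}$ forces $\int_{\{x>R\}\cap\Omega}|u_k|^2 \leq \big(\sup_{x>R}e^{-x^2}\big)^2\int_{\{x>R\}\cap\Omega}|\partial_y u_k|^2 \leq e^{-2R^2}C^2$, uniformly in $k$ (for $R<k$; the same computation as in the proof of Lemma \ref{neumannconverge}). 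Hence given $\delta>0$ we may pick $R$ with $e^{-2R^2}C^2<\delta$, so $\int_{\Omega^{(i)}\cap\{x<R\}}|u_k|^2\geq 1-\delta$ once $\Omega^{(i)}\supseteq\Omega\cap\{x<R\}$, and strong $L^2$ convergence on that fixed bounded piece gives $\int_\Omega|u|^2\geq 1-\delta$. Since $\delta$ is arbitrary, $\|u\|_{L^2(\Omega)}=1$, so in particular $u\in H^1(\Omega)\setminus\{0\}$, completing the proof. I expect the bookkeeping in the diagonal argument and in matching the exhaustions $\Omega^{(i)}\subseteq\Omega_k$ to be routine; the escape-of-mass estimate is the crux and relies essentially on the cusp-like decay of $\Omega$, which is why the conclusion would fail for, say, a domain of bounded width.
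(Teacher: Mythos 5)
Most of your outline (exhaustion, local Rellich plus diagonalization, weak local limits, and the $\varepsilon/3$ splitting for the pairing against a fixed $v$) is sound and is a reasonable variant of what the paper does. But the step you yourself identify as the crux — ruling out escape of mass so that $u\neq 0$ — is justified by an inequality that is false. The bound
$$\int_{\{x>R\}\cap\Omega_k}|u_k|^2\;\leq\;\Big(\sup_{x>R}e^{-x^2}\Big)^2\int_{\{x>R\}\cap\Omega_k}|\partial_y u_k|^2$$
is a Poincar\'e inequality in the thin direction and requires the function to vanish on (part of) the lateral boundary $\{y=\pm e^{-x^2}\}$; the computation in Lemma \ref{neumannconverge} that you invoke works precisely because there $u$ has zero trace on $\partial P_k\cap\partial B$, so one can write $u(x,y)=\int_{g(x)}^{y}u_y$. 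Here the $u_k$ are arbitrary normalized functions in $H^1(\Omega_k)$ with no boundary condition: taking $u_k\equiv|\Omega_k|^{-1/2}$ gives $\nabla u_k=0$, so your right-hand side vanishes while the left-hand side does not. The uniform tail smallness $\sup_k\int_{\{x>R\}\cap\Omega_k}|u_k|^2\to 0$ as $R\to\infty$ for bounded $H^1$ sequences is true for this horn-shaped domain, but it is essentially equivalent to the compactness of the embedding $H^1(\Omega)\hookrightarrow L^2(\Omega)$ and cannot be obtained from a naive thin-direction Poincar\'e; a direct proof would have to separate each slice average $\bar u(x)$ from the oscillation $u-\bar u$ and run a weighted one-dimensional argument in $x$ (this is the content of the Evans--Harris theory of generalized ridged domains).

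The paper avoids this issue by a different mechanism: it builds uniformly bounded extension operators $E_k:H^1(\Omega_k)\to H^1(\Omega)$ (a cutoff reflection across $\{x=k\}$), so that $h_k:=E_ku_k$ is a bounded sequence in $H^1(\Omega)$, and then invokes the compactness of $H^1(\Omega)\hookrightarrow L^2(\Omega)$ from Evans--Harris (Example 6.3.3, already used in Theorem \ref{diamcounterex}). Strong convergence in $L^2$ of the whole domain, together with $\|h_k\|_{L^2(\Omega)}\geq\|u_k\|_{L^2(\Omega_k)}=1$, gives $u\neq 0$ at once, and the gradient pairing limit follows from the weak $H^1(\Omega)$ convergence plus the same tail estimate on $\nabla v$ that you use. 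To repair your proof you should either import that compact-embedding input (in which case the extension-operator route is the cleanest way to use it) or supply a genuine quantitative tail estimate along the lines just described; as written, the non-vanishing of $u$ is not established.
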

\begin{proof}
    Let $\chi:\Rbb^2\to\Rbb$ be a smooth function such that $\chi\equiv 1$ on the disk of radius $1$ and $\chi\equiv 0$ outside the disk of radius $2$. For each $k$, define extension operators $E_k:H^1(\Omega_k)\to H^1(\Omega)$ such that for any $f\in C^1(\overline{\Omega_k})$, $$(E_kf)(x,y)=\begin{cases}
        f(x,y)\;&\text{if}\;\;(x,y)\in\Omega_k\\
        \chi(x-k,y)\Big(4f\big(\frac32k-\frac12x,y\big)-3f(2k-x,y)\Big)\;&\text{if}\;\;(x,y)\in \Omega\setminus \Omega_k.
    \end{cases}$$
    Then the operator norms $\|E_k\|$ are bounded above uniformly in $k$. Set $h_k:=E_ku_k\in H^1(\Omega)$. Since the $u_k$ are uniformly bounded in $H^1$ norm, so are the $h_k$. Say $\|\nabla h_k\|_{L^2(\Omega_k)}\leq \tilde{C}$ for all $k$. By Example 6.3.3 of \cite{evans}, the natural embedding $H^1(\Omega)\hookrightarrow L^2(\Omega)$ is compact. Thus, there exists $u\in H^1(\Omega)$ and a subsequence $h_{k_{\ell}}$ converging to $u$ in $L^2(\Omega)$ and converging weakly to $u$ in $H^1(\Omega)$. Let $v\in H^1(\Omega)$, and let $\epsilon>0$. Choose $L$ sufficiently large such that for any $\ell\geq L$, $$\|v\|_{H^1(\Omega\setminus\Omega_{k_{\ell}})}<\frac{\epsilon}{2\tilde{C}}\;\;\text{and}\;\;\Big|\int_{\Omega}\nabla h_{k_{\ell}}\cdot \nabla v-\int_{\Omega}\nabla u\cdot\nabla v\Big|<\frac{\epsilon}{2}.$$ Then for $\ell\geq L$,
    \begin{align*}
        \Big|\int_{\Omega_{k_{\ell}}}\nabla u_{k_{\ell}}\cdot \nabla v&-\int_{\Omega}\nabla u\cdot\nabla v\Big|\\&\leq \Big|\int_{\Omega_{k_{\ell}}}\nabla u_{k_{\ell}}\cdot\nabla v-\int_{\Omega}\nabla h_{k_{\ell}}\cdot\nabla v\Big|+\Big|\int_{\Omega}\nabla h_{k_{\ell}}\cdot \nabla v-\int_{\Omega}\nabla u\cdot\nabla v\Big|\\
        &< \Big|\int_{\Omega\setminus\Omega_{k_{\ell}}}\nabla h_{k_{\ell}}\cdot\nabla v\Big|+\frac{\epsilon}{2}\\
        &\leq \|\nabla h_{k_{\ell}}\|_{L^2(\Omega)}\cdot\frac{\epsilon}{2\tilde{C}}+\frac{\epsilon}{2}\\
        &\leq \epsilon.
    \end{align*}
\end{proof}
\begin{lem}\label{evalsconverge}
    For all $j\geq 1$, $\mu_j(\Omega_k)\to\mu_j(\Omega)$ as $k\to\infty$. 
\end{lem}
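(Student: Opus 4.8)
The plan is to prove the two directions $\limsup_k \mu_j(\Omega_k) \le \mu_j(\Omega)$ and $\liminf_k \mu_j(\Omega_k) \ge \mu_j(\Omega)$ separately, using the min-max characterization of eigenvalues in both cases.

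For the upper bound $\limsup_k \mu_j(\Omega_k) \le \mu_j(\Omega)$, I would take an orthonormal system $\phi_1,\dots,\phi_j$ of Neumann eigenfunctions on $\Omega$ spanning the relevant $j$-dimensional space, and restrict them to $\Omega_k$. Since $\Omega_k \subseteq \Omega$ and $|\Omega \setminus \Omega_k| \to 0$ (indeed $\Omega \setminus \Omega_k = \{x > k\}$ has exponentially small area), the restrictions $\phi_i|_{\Omega_k}$ form an ``almost orthonormal'' family in $L^2(\Omega_k)$ whose Dirichlet energies converge to $\int_\Omega |\nabla \phi_i|^2$. Feeding the span of these restrictions into the min-max principle on $\Omega_k$, together with a Gram-Schmidt/perturbation argument to correct for the small loss of orthonormality, yields $\mu_j(\Omega_k) \le \mu_j(\Omega) + o(1)$. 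This direction is the routine one.

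The harder direction is the lower bound $\liminf_k \mu_j(\Omega_k) \ge \mu_j(\Omega)$, and this is exactly what Lemma \ref{subseqconverge} was set up to handle. I would argue by induction on $j$ and by contradiction: pass to a subsequence along which $\mu_j(\Omega_k)$ converges to some $\Lambda \le \mu_j(\Omega)$, and take $L^2(\Omega_k)$-orthonormal Neumann eigenfunctions $u_1^k,\dots,u_j^k$ on $\Omega_k$ with eigenvalues $\mu_1(\Omega_k),\dots,\mu_j(\Omega_k)$, all of which are bounded (by the already-established upper bounds), so their $H^1(\Omega_k)$-norms are uniformly bounded. Applying Lemma \ref{subseqconverge} (iterated, or with a diagonal argument) gives limits $u_1,\dots,u_j \in H^1(\Omega)$, each nonzero, with $\int_{\Omega_{k_\ell}} \nabla u_i^{k_\ell} \cdot \nabla v \to \int_\Omega \nabla u_i \cdot \nabla v$ for all test functions $v \in H^1(\Omega)$. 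One checks that the limiting functions are mutually orthogonal in $L^2(\Omega)$ (here the extension operators $E_k$ and the compact embedding give strong $L^2$ convergence of the extensions, and one must argue the mass doesn't escape to infinity — this is where the uniform control of $E_k$ and the exponential decay of $\Omega$ matter), and that each $u_i$ is a weak Neumann eigenfunction on $\Omega$ with eigenvalue $\lim_\ell \mu_i(\Omega_{k_\ell}) \le \mu_j(\Omega)$. Passing to the limit in the weak eigenvalue equation $\int \nabla u_i^k \cdot \nabla v = \mu_i(\Omega_k) \int u_i^k v$ requires knowing $\int_{\Omega_{k_\ell}} u_i^{k_\ell} v \to \int_\Omega u_i v$ for $v \in H^1(\Omega)$, which follows from the strong $L^2$ convergence of the extensions together with $\|v\|_{L^2(\Omega \setminus \Omega_{k_\ell})} \to 0$. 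Having produced a $j$-dimensional space of Neumann eigenfunctions on $\Omega$ with eigenvalues all $\le \Lambda \le \mu_j(\Omega)$, the min-max principle forces $\mu_j(\Omega) \le \Lambda$, hence equality.

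The main obstacle I anticipate is ensuring no $L^2$ mass of the eigenfunctions escapes to infinity in the limit, i.e., that the $u_i$ are genuinely nonzero and that orthonormality is preserved in the limit rather than degenerating. Lemma \ref{subseqconverge} already asserts $u \neq 0$, so I would lean on that, but to get the full orthonormal frame (and the correct eigenvalue labels) I would combine it with the compactness of $H^1(\Omega) \hookrightarrow L^2(\Omega)$ from Example 6.3.3 of \cite{evans} to upgrade to strong $L^2(\Omega)$ convergence of the extended functions $E_{k_\ell} u_i^{k_\ell}$, and then use the uniform tail smallness $\|E_{k} u_i^{k} - u_i^k\|_{L^2(\Omega \setminus \Omega_k)} \to 0$ (from the uniform bound on $\|E_k\|$ and $|\Omega \setminus \Omega_k| \to 0$) to transfer orthonormality from $\Omega_k$ to $\Omega$. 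Once that bookkeeping is done, the rest is a standard min-max comparison.
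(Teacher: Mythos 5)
Your proposal is correct and follows essentially the same route as the paper: the same restriction-of-eigenfunctions min-max argument for $\mu_j(\Omega_k)\le\mu_j(\Omega)+o(1)$, and the same subsequence/diagonalization argument using Lemma \ref{subseqconverge} to produce nonzero limiting weak Neumann eigenfunctions on $\Omega$ with eigenvalues $\kappa_j\le\mu_j(\Omega)$, forcing equality by min-max. If anything, you are more explicit than the paper about two points it leaves implicit — preservation of $L^2$-orthogonality in the limit and the convergence $\int_{\Omega_{k_\ell}}\phi_{k_\ell,j}v\to\int_\Omega\eta_j v$ via strong $L^2$ convergence of the extensions — which is a welcome clarification rather than a deviation.
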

\begin{proof}
    Let $\{\phi_j\}$ be an orthonormal basis of Neumann eigenfunctions for $\Omega$, and for each $k$, let $\{\phi_{k,j}\}$ be an orthonormal basis of Neumann eigenfunctions for $\Omega_k$. Let $\psi_{k,j}$ denote the restriction of $\phi_j$ to $\Omega_k$. Let $V_{k,j}$ denote the vector space of functions spanned by $\{\psi_{k,1},\dots ,\psi_{k,j}\}$. By the min-max characterization of the eigenvalues and the fact that $|\Omega\setminus\Omega_k|\to0$ as $k\to\infty$, we have 
    \begin{equation}\label{limits}
        \mu_j(\Omega_k)\leq \max_{u\in V_{k,j}\setminus\{0\}}\frac{\int_{\Omega_k}|\nabla u|^2}{\int_{\Omega_k}|u|^2}=\mu_j(\Omega)+o(1)\;\;\text{as}\;\;k\to\infty.
    \end{equation}
    Thus, each of the sequences $\{\mu_j(\Omega_k)\}_{k=1}^{\infty}$ is bounded. By a diagonalization argument, we may extract a subsequence such that $\mu_j(\Omega_{k_{\ell}})\to\kappa_j\leq \mu_j(\Omega)$ for some $\kappa_j>0$ and for each $j$. By (\ref{limits}) and Lemma \ref{subseqconverge}, we may apply another diagonalization argument to extract a further sequence, which we still label $\{\Omega_{k_{\ell}}\}$, such that for each $j$, there exists $\eta_j\in H^1(\Omega)\setminus\{0\}$ such that for any $v\in H^1(\Omega)$, 
    $$\lim_{\ell\to\infty}\int_{\Omega_{k_{\ell}}}\nabla \phi_{k_{\ell},j}\cdot \nabla v=\int_{\Omega}\nabla \eta_j\cdot\nabla v.$$ Since the $\phi_{k_{\ell},j}$ are eigenfunctions, 
    \begin{align*}
        \int_{\Omega}\nabla \eta_{j}\cdot\nabla v=\lim_{\ell\to\infty}\int_{\Omega_{k_{\ell}}}\nabla \phi_{k_{\ell},j}\cdot\nabla v=\lim_{\ell\to\infty}\mu_j(\Omega_{k_{\ell}})\int_{\Omega_{k_{\ell}}}\phi_{k_{\ell},j}v=\kappa_j\int_{\Omega}\eta_jv.
    \end{align*}
    Therefore, each $\eta_j$ is a Neumann eigenfunction of $\Omega$ with eigenvalue $\kappa_j$. Since $\kappa_j\leq \mu_j(\Omega)$ for each $j$, we in fact have $\kappa_j=\mu_j(\Omega)$ for each $j$. Since we may reapply this argument to any subsequence of $\{\Omega_k\}$, it must be that the eigenvalues of the entire sequence converge to those of $\Omega$. 
\end{proof}

\indent We also construct a sequence of examples showing that the upper bounds in Theorem \ref{mainthmconvex} and Corollary \ref{crosssectionest} also do not hold in the non-convex setting, disproving the upper bound conjectured in \cite{coxetal}. The counterexamples used here are similar to the ``rooms and passages" domains that were studied by many authors (e.g. \cite{evans}) as examples of bounded domains whose essential Neumann spectra are nonempty. 
\begin{thm}\label{uppercounterex}
    There exists a sequence of domains $\Omega_k\subseteq \Rbb^2$ such that both $|\partial\Omega_k|^2/|\Omega|$ and $D(\Omega_k)^2/|\Omega|$ are uniformly bounded above in $k$ while $N(\Omega_k)\to\infty$ as $k\to\infty$.
\end{thm}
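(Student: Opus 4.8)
The plan is to realize the $\Omega_k$ as a ``rooms and passages'' sequence. Fix the summable data $r_i=\ell_i=2^{-i-2}$, $i\ge 1$. For each $k\ge1$, I would build $\Omega_k$ out of $k$ square rooms $R_1,\dots,R_k$, with $R_i$ of side length $r_i$, placed consecutively along the $x$-axis with centers on that axis and a gap of length $\ell_i$ between $R_i$ and $R_{i+1}$, and I would join each consecutive pair by a thin rectangular passage $P_i$ of length $\ell_i$ and width $w_i>0$ centered on the $x$-axis, for $i=1,\dots,k-1$; thus $\Omega_k:=\bigcup_{i\le k}R_i\cup\bigcup_{i\le k-1}P_i$, which is a connected open set. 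The widths $w_i$ will be small but chosen \emph{independently of $k$}; with the explicit data above the whole construction fits in a fixed box, so $\Omega_k\subseteq[0,1]^2$ for every $k$ after a translation. Each $\Omega_k$ is a bounded Lipschitz domain: the boundary is smooth away from the convex corners of the rooms (harmless) and the re-entrant corners where a passage meets a room, each of which is, in suitable rotated coordinates, the hypograph of the Lipschitz function $v\mapsto|v|$ --- or one simply rounds these corners, changing none of the estimates below.

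First I would record the geometric bounds. Since $\Omega_k\subseteq[0,1]^2$, $D(\Omega_k)\le\sqrt2$, and since $R_1\subseteq\Omega_k$, $|\Omega_k|\ge r_1^2>0$; hence $D(\Omega_k)^2/|\Omega_k|$ is bounded uniformly in $k$. Each room contributes at most $4r_i$ and each passage at most $2(\ell_i+w_i)$ to the perimeter, and $(r_i),(\ell_i),(w_i)$ are all summable, so $|\partial\Omega_k|\le 4\sum_i r_i+2\sum_i(\ell_i+w_i)$ is bounded uniformly; with $|\Omega_k|\ge r_1^2$ this bounds $|\partial\Omega_k|^2/|\Omega_k|$ uniformly. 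Finally, Dirichlet domain monotonicity gives $\lambda_1(\Omega_k)\ge\lambda_1([0,1]^2)=2\pi^2$ for every $k$.

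The crux is to manufacture $k$ small Neumann eigenvalues of $\Omega_k$. For $i=1,\dots,k$ let $v_i\in H^1(\Omega_k)$ be $1$ on $R_i$, be $0$ on the other rooms and on every passage not adjacent to $R_i$, and on each passage adjacent to $R_i$ interpolate linearly in the long direction from $1$ at the $R_i$-end to $0$ at the passage midpoint (so $v_i$ vanishes on the half of such a passage farther from $R_i$). Then $v_1,\dots,v_k$ have pairwise disjoint supports --- consecutive ones split a common passage at its midpoint --- so they are $L^2$-orthogonal and span a $k$-dimensional subspace of $H^1(\Omega_k)$, and a one-line computation gives $\int_{\Omega_k}|\nabla v_i|^2\le 2w_{i-1}/\ell_{i-1}+2w_i/\ell_i$ (with $w_0:=0$, and the $w_k/\ell_k$ term absent) while $\int_{\Omega_k}v_i^2\ge r_i^2$. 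Taking $w_i:=\frac18\pi^2\ell_i r_{i+1}^2$ --- positive, summable, independent of $k$ --- makes the Rayleigh quotient of each $v_i$ at most $\frac12\pi^2$, and since the supports are disjoint the Rayleigh quotient of any $\sum_i c_i v_i$ is a convex combination of these, hence at most $\frac12\pi^2$ as well. By the min-max principle, $\mu_k(\Omega_k)\le\frac12\pi^2<2\pi^2\le\lambda_1(\Omega_k)$, so $\mu_j(\Omega_k)\le\mu_k(\Omega_k)<\lambda_1(\Omega_k)$ for all $j\le k$, i.e.\ $N(\Omega_k)\ge k\to\infty$.

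There is no hard analytic step: this is the classical ``thin passage'' mechanism, the same one that makes the essential Neumann spectrum of the infinite rooms-and-passages domain nonempty. The only point needing genuine care is coordinating the parameters --- the passage widths must be small enough, \emph{uniformly in $k$}, that the room-supported test functions have Rayleigh quotient below the $k$-independent lower bound $2\pi^2$ for $\lambda_1(\Omega_k)$, which is precisely what forces $w_i\ll\ell_i r_{i+1}^2$ --- together with the routine checks that the $\Omega_k$ are genuinely Lipschitz and fit in a fixed box; both are handled by the explicit construction.
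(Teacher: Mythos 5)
Your proposal is correct and follows essentially the same route as the paper: a rooms-and-passages domain with summable room and passage sizes (so perimeter and diameter stay uniformly bounded), Dirichlet domain monotonicity for a $k$-independent lower bound on $\lambda_1(\Omega_k)$, and $k$ cheap test functions concentrated on the rooms forcing $\mu_k(\Omega_k)$ below that bound --- the paper organizes this last step via Neumann bracketing into pieces $T_j$ and mixed eigenvalues, but that is the same mechanism as your disjointly supported $v_i$. The only cosmetic point is that $\bigcup_i R_i\cup\bigcup_i P_i$ with open rectangles is not connected (and with closed ones is not open), so you should define $\Omega_k$ as the interior of the closure, i.e.\ include the open interface segments where passages meet rooms.
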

\begin{proof}
    We begin by defining the domains $\Omega_k$. For $0\leq j\leq k-1$, define a domain $T_j$ to be the union of the rectangles $[0,3\cdot 2^{-j}]\times [0,\frac{1}{2}\cdot 2^{-3j}]$ and $[2^{-j},2\cdot 2^{-j}]\times [0,2^{-j}]$. For $1\leq j\leq k-1$, translate $T_j$ horizontally such that the left-most edge of $T_j$ is contained in the right-most edge of $T_{j-1}$. Let $\Omega_k$ denote the union of the translated $T_j$'s. See Figure \ref{roomsandpassages}. 
    \begin{figure}
        \centering
        \includegraphics[width=1.0\linewidth]{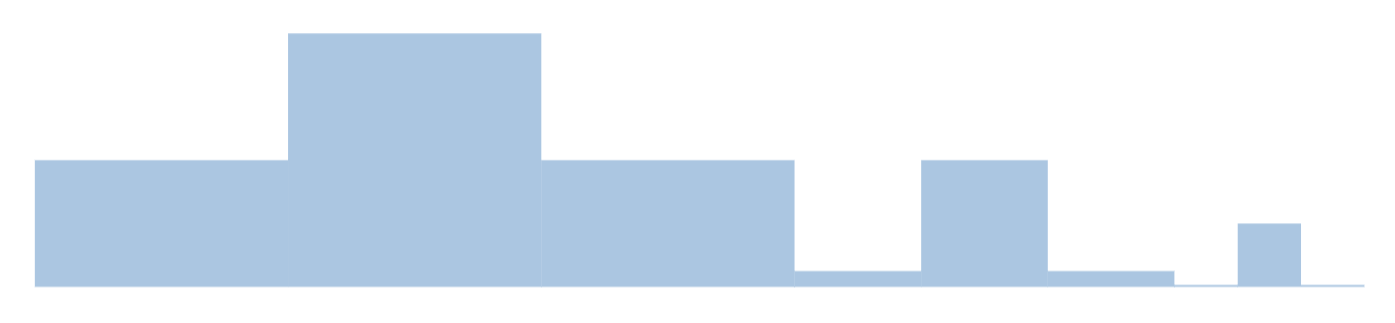}
        \caption{The domain $\Omega_k$ constructed in the proof of Theorem \ref{uppercounterex} with $k=3$. Illustrated in Desmos (2025).}
        \label{roomsandpassages}
    \end{figure}
    A simple computation shows that $|\partial\Omega_k|^2/|\Omega_k|$ and  $D(\Omega_k)^2/|\Omega_k|$ are both bounded above uniformly in $k$.\\
    \indent Since $\Omega_k\subseteq [0,6]\times [0,1]$ for all $k$, Dirichlet domain monotonicity gives a uniform lower bound on the first Dirichlet eigenvalue of $\Omega_k$: $$\lambda_1(\Omega_k)\geq \lambda_1([0,6]\times [0,1])\geq \pi^2.$$
    \indent We claim that $\mu_k(\Omega_k)\leq 1$, from which it follows that $N(\Omega_k)\geq k\to\infty$ as $k\to\infty$. Indeed, let $\nu_1(T_j)$ denote the first mixed eigenvalue of $T_j$ with Dirichlet conditions on the left-most and right-most edges and Neumann conditions elsewhere. By domain monotonicity, $$\mu_k(\Omega_k)\leq \max_{j}\big\{\nu_1(T_j)\big\}.$$ Using a piecewise linear test function
    $$\phi_j(x,y)=\begin{cases}
        2^jx\;\;&\text{if}\;\; 0\leq x\leq 2^{-j}\\
        1\;\;&\text{if}\;\; 2^{-j}\leq x\leq 2\cdot 2^{-j}\\
        -2^jx+3\;\;&\text{if}\;\; 2\cdot 2^{-j}\leq x\leq 3\cdot 2^{-j},
    \end{cases}$$
    we have $$\mu_k(\Omega)\leq \nu_1(T_j)\leq \frac{\int_{T_j}|\nabla \phi|^2}{\int_{T_j}|\phi|^2}\leq 1.$$
\end{proof}

\section{Polygonal domains}
This section is devoted to proving Theorem \ref{polygonthm}. We begin with the lower bound on $N(P)$ for $P\in \mathcal{P}_m$, for which the proof is fairly straightforward.

\begin{proof}[Proof of Theorem \ref{polygonthm}, Part 1]
    Recall the Faber-Krahn inequality, which states that $$\lambda_1(P)\geq \frac{C}{|P|}.$$ where $C:=\pi j_0^2$ and $j_0$ is the first positive zero of the Bessel function of order zero. Let $e$ be a longest edge of $P$, and let $L:=|e|$. Note that since $P$ has $m$ edges, we have 
    \begin{equation}
        |\partial P|\leq mL.
    \end{equation}
    Applying an isometry if necessary, assume that $e$ is contained in the $x$-axis. Define $h:e\to [0,\infty)$ by $h(x)=\big|\{y\mid (x,y)\in P\}\big|$. Since $P$ is a polygon, $h$ is a piecewise affine function. Say that $h$ is affine on intervals $I_1,\dots , I_M$. Then $M\leq m-1$. Suppose that $I_i$ is the longest of these intervals, and apply an isometry to $P$ if necessary such that $I=[0,2\ell]\times\{0\}$ for some $\ell>0$. Since there are at most $m-1$ intervals and their total length is $L$, it follows that $2\ell\geq \frac{1}{m-1}L\geq \frac{1}{m(m-1)}|\partial P|$. By reflecting about the line $\{x=\ell\}$ if necessary, we may assume that, on $I_i$, $h(x)=ax+b$ with $a\leq 0$ and $b>0$.\\
    \indent Let $$N:=\Bigg\lfloor \sqrt{\frac{C}{2\pi^2}}\cdot \frac{\ell}{\sqrt{|P|}}\Bigg\rfloor.$$ We claim that $\mu_N(P)\leq \lambda_1(P)$. Indeed, on the interval $[0,\ell]$, $h$ satisfies the inequality $\frac{b}{2}\leq h(x)\leq b$.  Let $V_N$ denote the span of the functions, for $1\leq j\leq N$, $$(x,y)\mapsto \begin{cases}
        \sin\Big(\frac{j\pi}{\ell}x\Big)\;\;&\text{if}\;\;x\in[0,\ell]\\
        0&\text{if}\;\;x\notin[0,\ell].
    \end{cases}$$ Then \begin{align*}
        \mu_N(P)&\leq \max_{u\in V_N\setminus\{0\}}\frac{\int_P|\nabla u|^2}{\int_P|u|^2}\\
        &=\max_{u\in V_N\setminus\{0\}}\frac{\int_0^{\ell}|\partial_x u|^2h(x)dx}{\int_0^{\ell}|u|^2h(x)dx}\\
        &\leq 2\max_{u\in V_N\setminus \{0\}}\frac{\int_0^{\ell}|\partial_xu|^2dx}{\int_0^{\ell}|u|^2dx}\\
        &=2\Big(\frac{N\pi}{\ell}\Big)^2\\
        &\leq \frac{C}{|P|}\\
        &\leq \lambda_1(P).
    \end{align*} 
    Since $N(\Omega)\geq 1$ for any Lipschitz domain $\Omega$, we then have $$N(P)\geq \max\{1,N\}\geq \frac12\cdot\sqrt{\frac{C}{2\pi^2}}\cdot \frac{\ell}{\sqrt{|P|}}\geq \frac{j_0}{4\sqrt{2\pi}\cdot m(m-1)}\cdot \frac{|\partial P|}{\sqrt{|P|}}.$$
\end{proof}

\indent Before proving the upper bound in Theorem \ref{polygonthm}, we show that, for $n\geq 4$, the lower bound is sharp up to the constants used. 
\begin{eg}\label{polygoncounterex}
    Let $T\subseteq \Rbb^2$ be the open triangular domain with vertices at $(-1,0), (0,0)$, and $(0,1)$. For $k\geq 5$, let $T_{k}\subseteq \Rbb^2$ be the open triangular domain with vertices at $(0,0),(0,1/k),(k,0)$. Let $$\Omega_k=T\cup T_k\cup\{(0,y)\mid 0<y<1/k\}\subseteq \Rbb^2.$$ See Figure \ref{polygoncounterexfigure}.
    \begin{figure}
        \centering
        \includegraphics[width=1.0\linewidth]{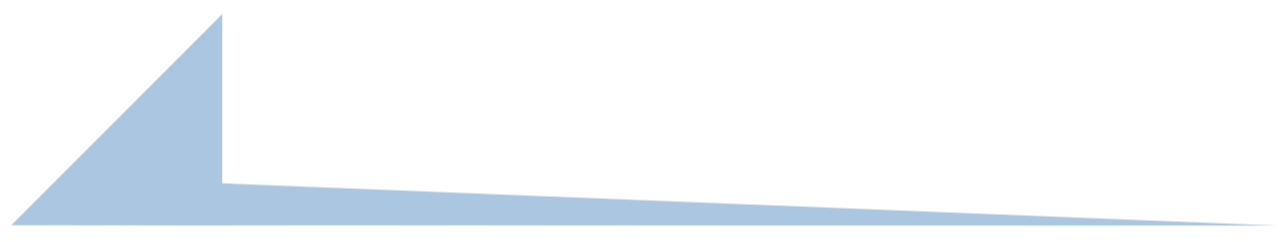}
        \caption{The domain $\Omega_k$ constructed in Example \ref{polygoncounterex} with $k=5$. Illustrated in Desmos (2025).}
        \label{polygoncounterexfigure}
    \end{figure}
    Then there exists $C>0$ independent of $k$ such that $$N(\Omega_k)\leq C\cdot \frac{|\partial\Omega_k|}{\sqrt{|\Omega_k|}}.$$
\end{eg}
\begin{proof}
    First we find an upper bound on $\lambda_1(\Omega_k)$. One can check by direct computation that $\sin(\pi(x+1))\sin(2\pi y)-\sin(2\pi(x+1))\sin(\pi y)$ is a Laplace eigenfunction satisfying Dirichlet boundary conditions on the triangle $T$. Since this function is also positive in $T$, it follows that it is a first Dirichlet eigenfunction, so $\lambda_1(T)=5\pi^2$. By Dirichlet domain monotonicity, therefore, $\lambda_1(\Omega_k)\leq \lambda_1(T)=5\pi^2$.\\
    \indent Let $Q=[0,1/5]\times[0,1/5]\subseteq \Rbb^2$. Let $\{Q_m\}$ be an enumeration of the nonempty intersections $(Q+\frac15(p,q))\cap \Omega_k$ where $(p,q)\in\Zbb^2$. By the definition of $\Omega_k$, at most $15+5k$ of these intersections are nonempty, and each $Q_m$ is convex. By Neumann domain monotonicity, we have $$\mu_{16+5k}(\Omega_k)\geq \min_m\mu_2(Q_m).$$ Since the diameter of each $Q_m$ is at most $\sqrt{2}/5$, a result of Payne and Weinberger \cite{payneweinberger} states that, for each $m$, $$\mu_2(Q_m)\geq \frac{\pi^2}{2/25},$$ so we have $$\mu_{16+5k}(\Omega_k)\geq \lambda_1(\Omega_k),$$ and the desired bound follows. 
\end{proof}
We now prove the upper bound in Theorem \ref{polygonthm}. We begin by describing a process in which an $m$-gon is partitioned into at most $m-2$ disjoint convex polygons. Following this construction, we derive an upper bound on $N(P)$ in terms of the Neumann eigenvalues of the resulting convex polygons and then make estimates on these eigenvalues. 
\begin{proof}[Proof of Theorem \ref{polygonthm}, Part 2]
    Recall that $P$ has at most $m-3$ non-convex vertices (i.e. vertices of interior angle greater than $\pi$). Let $v$ be a non-convex vertex of $P$ (if such a vertex exists). Let $\gamma$ be the longest line segment contained in $P$ that has an endpoint at $v$ and that bisects the interior angle at $v$. Then $P\setminus \gamma$ has at most two connected components and at least one fewer non-convex vertex than $P$. Repeat this process on $P\setminus\gamma $ (note that the first step may have removed two non-convex vertices) if it has a remaining non-convex vertex. Continue repeating this process until no non-convex vertices remain. Since $P$ has at most $m-3$ non-convex vertices, this process terminates in at most $m-3$ steps, and we arrive at a disjoint union of convex polygons that we call $P_M$, which has $M\leq m-2$ connected components. Label these connected components $P_M=:Q_1\cup \dots \cup Q_M$.\\
    \indent Since the $Q_i$ partition $P$, Neumann domain monotonicity implies that $$N(P)\leq \sum_{i=1}^M \#\Big\{j\mid \mu_j(Q_i)\leq \lambda_1(P)\Big\}.$$ To prove the upper bound, we estimate $\lambda_1(P)$ from above and $\mu_j(Q_i)$ from below. To do so, we first define some geometric quantitites related to the $Q_i$.\\
    \indent For each $i\in\{1,\dots ,M\}$, let $\ell_i\subseteq \overline{Q_i}$ be a line segment realizing the diameter of $Q_i$. Let $w_i$ denote the supremum of the lengths of line segments contained in $Q_i$ with an endpoint in $\ell_i$ and that are perpindicular to $\ell_i$. Suppose without loss of generality that $w_1\geq w_i$ for all $i\in\{1,\dots ,M\}$. Let $\gamma_i\subseteq \overline{Q_i}$ denote a line segment realizing $w_i$. \\
    \indent Since $Q_1$ is convex, it contains the convex hull of $\ell_1\cup \gamma_1$, which is a triangle. This triangle contains a rectangle isometric to $[0,d_1/2]\times[0,w_1/2]$. Applying Dirichlet domain monotonicity and the inequality $w_1\leq d_1$, we get $$\lambda_1(P)\leq \lambda_1([0,d_1/2]\times[0,w_1/2])\leq \frac{8\pi^2}{w_1^2}.$$ For each $i$, let $$T_i:=\#\Big\{j\mid \mu_j(Q_i)\leq \frac{8\pi^2}{w_1^2}\Big\},$$ so $N(P)\leq \displaystyle\sum_{i=1}^M T_i$.\\
    \indent Fix $i\in\{1,\dots ,M\}$. By applying an isometry to $Q_i$, assume that $\ell_i$ is contained in the non-negative $x$-axis with an endpoint at the origin. As in Example \ref{polygoncounterex}, cover $Q_i$ with almost disjoint adjacent squares isometric to $[0,w_1/5]^2$. Let $R_1,\dots ,R_{p_i}$ denote the intersections of these squares with $Q_i$. Then each $R_r$ is convex with diameter equal to at most $\sqrt{2}w_1/5$. Neumann domain monotonicity and the Payne-Weinberger inequality \cite{payneweinberger} therefore implies that $$\mu_{p_i+1}(Q_i)\geq \min_{r}\mu_2(R_r)\geq \min_{r}\frac{\pi^2}{\text{diam}(R_r)^2}\geq \frac{25}{2}\cdot \frac{\pi^2}{w_1^2}\geq \frac{8\pi^2}{w_1^2}\geq \lambda_1(P).$$ Therefore $T_i\leq p_i+1$, and it remains to bound the value of $p_i+1$ from above with respect to the isoperimetric ratio of $P$. For this we use that for each $i$, $w_i,d_i\leq \frac{1}{2}|\partial P|$. Since $w_1$ is maximal, we get 
    $$T_i\leq 1+p_i\leq 1+10\cdot \Big\lceil \frac{5d_i}{w_1}\Big\rceil \leq 11\cdot\Big \lceil \frac{5|\partial P|}{2 w_1}\Big\rceil.$$ Since $\frac{5|\partial P|}{2w_1}>1$, its ceiling is at most $\frac{5|\partial P|}{w_1}$, so we get $$T_i\leq 55\cdot \frac{|\partial P|}{w_1}.$$ Summing over $i$ gives $$N(P)\leq \sum_{i=1}^MT_i\leq \sum_{i=1}^M55\cdot \frac{|\partial P|}{w_1}\leq 55(m-2)\cdot \frac{|\partial P|}{w_1}.$$ Now note that, up to isometry, each $Q_i$ is contained in the rectangle $[0,d_i]\times [-w_i,w_i]$, so $$|Q_i|\leq 2w_id_i\leq 2w_1d_i\leq w_1|\partial P|,$$ and summing over $i$ gives $$|P|=\sum_{i=1}^M |Q_i|\leq (m-2)w_1|\partial P|,$$ so we get $$\frac{1}{w_1}\leq (m-2)\cdot\frac{|\partial P|}{|P|}.$$ Plugging this into the last inequality involving $N(P)$ gives the result: 
    $$N(P)\leq 55(m-2)\cdot |\partial P|\cdot (m-2)\cdot \frac{|\partial P|}{|P|}=55(m-2)^2\cdot \frac{|\partial P|^2}{|P|}.$$
\end{proof}

\section{Fiber bundles with shrinking fibers}\label{mfldsection}

In this section we study $N(\Omega)$ for certain families of fiber bundles of smooth compact Riemannian manifolds, which will lead to a proof of Theorem \ref{tubenbhd}. 

\begin{nota}
   We denote the volume of a Riemannian manifold $(\Omega,g_{\Omega})$ by $|\Omega|_{g_{\Omega}}$. If $M\subseteq\Omega$ is a lower-dimensional submanifold, we still denote by $|M|_{g_\Omega}$ the Riemannian volume of $M$ induced by the metric $g_{\Omega}$. When it is clear in context, we may omit the subscript. 
\end{nota}
\begin{nota}
    When a Riemannian manifold $(\Omega,g_{\Omega})$ has empty boundary, we use the same notation for the eigenvalues of the Laplace-Beltrami operator as we use for Neumann eigenvalues, i.e. $\mu_j(\Omega)$. When we wish to emphasize the dependence of the eigenvalues on the metric, we may also write $\mu_j(\Omega,g_{\Omega})$ and similarly for the $\lambda_j$.
\end{nota}
\indent Let $(M,g_M)$ be a compact $m$-dimensional smooth Riemannian manifold with smooth (possibly empty) boundary. Let $(F,g_F)$ be a compact $n$-dimensional smooth Riemannian manifold with smooth nonempty boundary. Let $p:\Omega\to M$ be a smooth fiber bundle with fibers diffeomorphic to $F$. For $\epsilon_0>0$, let $\{\geps\}_{\epsilon\in(0,\epsilon_0)}$ denote a family of Riemannian metrics on $\Omega$. We often denote the pair $(\Omega,\geps)$ by $\Oe$. The general result for fiber bundles is

\begin{thm}\label{asymptoticthm}
    Suppose that there exists $C>0$ and a local trivialization $$\{(U_i\subseteq M,\phi_i:p^{-1}(U_i)\to U_i\times F)\}_{i=1}^K$$ such that for each $i$, the identity map $$\Big(U_i\times F,(\phi_i^{-1})^*(\geps|_{p^{-1}(U_i)})\Big)\to \Big(U_i\times F,g_M|_{U_i}\oplus \epsilon^2g_F\Big)$$ is $(1+C\epsilon)$-bi-Lipschitz. Then, as $\epsilon\to 0^+$, 
    \begin{align*}
        N(\Oe)&=\frac{\omega_m}{(2\pi)^m}\cdot|M|\cdot\Bigg(\sum_{j=1}^{N(F)}\big(\lambda_1(F)-\mu_j(F)\big)^{m/2}\Bigg)\cdot\frac{1}{\epsilon^m}+o\Big(\frac{1}{\epsilon}\Big)\\
        &=\frac{\omega_m}{(2\pi)^m}\cdot\frac{|F|^{n+m-1}}{|\partial F|^{n+m}}\cdot\Bigg(\sum_{j=1}^{N(F)}\big(\lambda_1(F)-\mu_j(F)\big)^{m/2}\Bigg)\cdot\frac{|\partial\Oe|^{n+m}}{|\Oe|^{n+m-1}}+o\Big(\frac{|\partial\Oe|^{n+m}}{|\Oe|^{n+m-1}}\Big)
    \end{align*}
\end{thm}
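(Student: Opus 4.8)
The plan is to first establish the result for a trivial Riemannian product $(M\times F, g_M\oplus\epsilon^2 g_F)$, where everything can be computed explicitly, and then pass to the general fiber bundle by a Dirichlet--Neumann bracketing argument exploiting the bi-Lipschitz hypothesis. For the product case, separation of variables shows that the Neumann (resp.\ Dirichlet) spectrum of $(M\times F,g_M\oplus\epsilon^2 g_F)$ consists of all sums $\mu_k(M)+\epsilon^{-2}\mu_j(F)$ (resp.\ $\mu_k(M)+\epsilon^{-2}\lambda_j(F)$, with Dirichlet conditions only on $\partial F$ since $\partial M=\emptyset$ in the motivating case---though one must handle $\partial M\neq\emptyset$ too by imposing the boundary condition on $M$ as well). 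Since $\lambda_1(\Oe)=\mu_1(M)+\epsilon^{-2}\lambda_1(F)=\epsilon^{-2}\lambda_1(F)$, counting $N(\Oe)$ amounts to counting pairs $(k,j)$ with $j\le N(F)$ and $\mu_k(M)\le \epsilon^{-2}(\lambda_1(F)-\mu_j(F))$. For each fixed $j\le N(F)$, Weyl's law for $M$ gives that the number of such $k$ is asymptotically $\frac{\omega_m}{(2\pi)^m}|M|\big(\epsilon^{-2}(\lambda_1(F)-\mu_j(F))\big)^{m/2}$; summing over $j$ yields the first displayed formula. (For $j$ with $\mu_j(F)=\lambda_1(F)$ the contribution is lower order, and for $j>N(F)$ there are no contributions once $\epsilon$ is small, since $\mu_{N(F)+1}(F)>\lambda_1(F)$.)

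Next I would pass from the product to the bundle. Using the local trivialization $\{(U_i,\phi_i)\}_{i=1}^K$ together with a partition-of-unity-type decomposition of $M$ into finitely many Lipschitz pieces subordinate to the $U_i$, I would bracket $N(\Oe)$ between counting functions for disjoint unions of product pieces $(V_i\times F, g_M|_{V_i}\oplus\epsilon^2 g_F)$ with Neumann conditions on the new interior interfaces (giving an upper bound on $N$ via Neumann domain monotonicity) and with Dirichlet conditions on the interfaces (giving a lower bound). The $(1+C\epsilon)$-bi-Lipschitz hypothesis controls the multiplicative distortion of all Rayleigh quotients by $(1+C\epsilon)^{\pm 2}$, so both $\lambda_1(\Oe)$ and each $\mu_k(\Oe)$ agree with their product-model counterparts up to factors $1+O(\epsilon)$. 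Since the product-model counting function is $\asymp\epsilon^{-m}$ and is, by a continuity/stability argument for Weyl asymptotics, insensitive to such $1+O(\epsilon)$ perturbations of the spectral threshold at leading order, the bracketing pieces all have the same leading asymptotic $\frac{\omega_m}{(2\pi)^m}|V_i|\big(\sum_j(\lambda_1(F)-\mu_j(F))^{m/2}\big)\epsilon^{-m}$, and summing $|V_i|=|M|$ closes the argument. The second displayed formula then follows from the first by substituting the elementary volume asymptotics $|\Oe|=\epsilon^n|M||F|+o(\epsilon^n)$ and $|\partial\Oe|=\epsilon^{n-1}|M||\partial F|+o(\epsilon^{n-1})$ (the fibers collapse, so the boundary of $\Oe$ is asymptotically the fiberwise boundary bundle), which give $|\partial\Oe|^{n+m}/|\Oe|^{n+m-1}=\epsilon^{-m}|M||\partial F|^{n+m}/|F|^{n+m-1}+o(\epsilon^{-m})$.

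The main obstacle I anticipate is the bracketing step: one must show that cutting $\Omega$ along the preimages $p^{-1}(\partial V_i)$ of the interfaces and imposing Neumann or Dirichlet conditions there perturbs $N(\Oe)$ only at order $o(\epsilon^{-m})$, i.e.\ that the interface contributions are genuinely lower-order. The delicate point is that $\lambda_1$ of the bracketed domains must also be controlled: imposing Dirichlet conditions on interfaces could a priori raise $\lambda_1$, and imposing Neumann conditions could lower it, either of which shifts the counting threshold. One resolves this by noting that the first Dirichlet eigenvalue is governed by the fiber direction---it is $\epsilon^{-2}\lambda_1(F)+O(1)$ regardless of the conditions on the (codimension-$n$, "thin'') interfaces, because a Faber--Krahn/capacity estimate shows the interface regions are too small in the collapsing direction to affect $\lambda_1$ at order $\epsilon^{-2}$---so the threshold $\lambda_1$ is stable up to $1+O(\epsilon)$ in all the bracketing pieces, which is exactly the precision needed. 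A secondary, more routine, technical issue is justifying the uniform Weyl remainder estimates so that the sum over the $O(1)$ many pieces and over $j\le N(F)$ does not accumulate error of size $\epsilon^{-m}$; this follows from the fact that $N(F)$ is a fixed finite number and $K$ is fixed, so only finitely many Weyl asymptotics are invoked.
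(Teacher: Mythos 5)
Your proposal is correct and follows essentially the same route as the paper: the exact product case via separation of variables and Weyl's law, a $(1+O(\epsilon))$ Rayleigh-quotient comparison to handle the bi-Lipschitz perturbation, Dirichlet--Neumann bracketing over the trivializing pieces with the key observation that the threshold $\lambda_1\approx\epsilon^{-2}\lambda_1(F)$ is stable under the cutting and the $O(\epsilon)$ distortion, and the volume asymptotics $|\Oe|\sim\epsilon^n|M||F|$, $|\partial\Oe|\sim\epsilon^{n-1}|M||\partial F|$ for the isoperimetric reformulation. This matches the paper's Propositions \ref{trivialversion} and \ref{trivialwarped} and Lemmas \ref{dommon}, \ref{mixedtrivialwarped}, and \ref{bdysize}.
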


One can check that for $\{p:\Oe\to M\}_{\epsilon\in(0,\epsilon_0)}$ satisfying the hypotheses of Theorem \ref{asymptoticthm}, $\Oe$ Gromov-Hausdorff converges to $(M,g_M)$ as $\epsilon\to0^+$. Moreover, since $F$ has nonempty boundary, the isoperimetric ratio of $\Oe$ tends to infinity as $\epsilon\to0^+$.\\
\indent We prove Theorem \ref{asymptoticthm} by first establishing the result for trivial bundles with product metrics. Using standard techniques from matrix perturbation theory, we then extend this result to trivial bundles with metrics satisfying the hypotheses of Theorem \ref{asymptoticthm}. We then complete the proof by performing Dirichlet-Neumann bracketing on the trivializing atlas for a general fiber bundle.\\
\indent Before proceeding with the proof, we show that the second two expressions in the conclusion of Theorem \ref{asymptoticthm} are equal to each other. The remainder of the proof is dedicated to proving the first equality. 
\begin{lem}\label{bdysize}
   For each $\epsilon\in (0,\epsilon_0)$, $$\frac{|\partial\Oe|^{n+m}}{|\Oe|^{n+m-1}}=\frac{1}{\epsilon^m}|M|\cdot\frac{|\partial F|^{n+m}}{|F|^{n+m-1}}+O\Big(\frac{1}{\epsilon^{m-1}}\Big).$$
\end{lem}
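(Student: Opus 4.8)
The plan is to reduce the statement to the pair of volume asymptotics
\begin{equation*}
    |\Oe| = \epsilon^n |M|\,|F| + O(\epsilon^{n+1})\qquad\text{and}\qquad |\partial\Oe| = \epsilon^{n-1}|M|\,|\partial F| + O(\epsilon^n),
\end{equation*}
from which the claimed identity follows by a short manipulation of the quotient $|\partial\Oe|^{n+m}/|\Oe|^{n+m-1}$: raising the first asymptotic to the power $n+m-1$, the second to the power $n+m$, and dividing, one finds that the power of $\epsilon$ is $(n-1)(n+m)-n(n+m-1)=-m$, that the volume factors collapse to $|M|\cdot|\partial F|^{n+m}/|F|^{n+m-1}$, and that all the multiplicative $1+O(\epsilon)$ errors accumulated along the way contribute only a term of size $O(\epsilon^{-(m-1)})$.

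To obtain the first asymptotic I would first upgrade the $(1+C\epsilon)$-bi-Lipschitz hypothesis to a pointwise comparison of metric tensors: evaluating the distance inequality along short geodesics and letting their length tend to $0$ shows that on each $U_i\times F$ one has $(1+C\epsilon)^{-2}\bigl(g_M|_{U_i}\oplus\epsilon^2 g_F\bigr)\le(\phi_i^{-1})^*\gep\le(1+C\epsilon)^{2}\bigl(g_M|_{U_i}\oplus\epsilon^2 g_F\bigr)$ as quadratic forms, so the corresponding volume forms agree up to a factor $1+O(\epsilon)$. Then pick a partition of unity $\{\eta_i\}$ on $M$ subordinate to $\{U_i\}$ and pull it back to the partition of unity $\{\eta_i\circ p\}$ on $\Omega$ subordinate to $\{p^{-1}(U_i)\}$. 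For each $i$, $\int_{p^{-1}(U_i)}(\eta_i\circ p)\,dV_{\gep}$ equals, up to a factor $1+O(\epsilon)$, the integral over $U_i\times F$ of $\eta_i(u)$ against $dV_{g_M\oplus\epsilon^2 g_F}=\epsilon^n\,dV_{g_M}\wedge dV_{g_F}$; since $\eta_i\circ p$ depends only on the base point, Fubini splits this into $\epsilon^n\bigl(\int_{U_i}\eta_i\,dV_{g_M}\bigr)|F|$, and summing over $i$ gives $|\Oe|=\epsilon^n|M|\,|F|\,(1+O(\epsilon))$. Pulling the partition of unity back from the base is precisely what keeps the (possibly large) distortion of the fibers by the transition maps from entering: one only ever integrates functions of the base point over the fibers.

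For the boundary, decompose $\partial\Omega=\partial^{\mathrm f}\Omega\cup p^{-1}(\partial M)$, where $\partial^{\mathrm f}\Omega$ is the codimension-one ``fiberwise boundary'', i.e. the subbundle with fiber $\partial F$ to which the $\phi_i$ restrict, and $p^{-1}(\partial M)$ is a bundle over $\partial M$ with fiber $F$ (empty when $M$ is closed). These two pieces meet only along the codimension-two corner $\partial^{\mathrm f}\Omega\cap p^{-1}(\partial M)$, which has zero $(n+m-1)$-dimensional Hausdorff measure, so $|\partial\Oe|=|\partial^{\mathrm f}\Oe|+|p^{-1}(\partial M)|_{\gep}$. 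Since restricting a quadratic-form inequality to a subspace preserves it, the tensor comparison above descends to $\partial^{\mathrm f}\Omega$, on which $g_M\oplus\epsilon^2 g_F$ restricts to $g_M\oplus\epsilon^2 g_{\partial F}$ with volume form $\epsilon^{n-1}\,dV_{g_M}\wedge dV_{g_{\partial F}}$, and to $p^{-1}(\partial M)$, on which it restricts to $g_{\partial M}\oplus\epsilon^2 g_F$ with volume form $\epsilon^{n}\,dV_{g_{\partial M}}\wedge dV_{g_F}$. Running the partition-of-unity computation of the previous paragraph verbatim on each of these bundles (the first over $M$ with fiber $\partial F$, the second over $\partial M$ with fiber $F$) yields $|\partial^{\mathrm f}\Oe|=\epsilon^{n-1}|M|\,|\partial F|\,(1+O(\epsilon))$ and $|p^{-1}(\partial M)|_{\gep}=O(\epsilon^{n})$, the latter being one order of $\epsilon$ smaller and hence harmless; adding them gives the second asymptotic.

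The only genuine obstacle is the bookkeeping for a non-trivial bundle: a priori there is no canonical ``fiber volume'' against which to compare, so one must be sure that gluing the local product computations together does not accumulate uncontrolled error. The pulled-back partition of unity resolves this cleanly, after which everything reduces to the bi-Lipschitz bound, Fubini, and the scaling law $dV_{\epsilon^2 h}=\epsilon^{\dim}dV_h$; the concluding arrangement of exponents is elementary.
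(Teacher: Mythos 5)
Your proposal is correct and follows essentially the same route as the paper: both establish $|\Oe|=(1+O(\epsilon))\,\epsilon^n|M|\,|F|$ and $|\partial\Oe|=(1+O(\epsilon))\,\epsilon^{n-1}|M|\,|\partial F|$ by comparing $\geps$ with the product metric $g_M\oplus\epsilon^2 g_F$ via the $(1+C\epsilon)$-bi-Lipschitz trivializations (the paper splits $M$ into disjoint pieces $V_i$ rather than using a partition of unity, and likewise keeps the lower-order $\epsilon^n|\partial M|\,|F|$ contribution from the boundary over $\partial M$), and then conclude by the same exponent computation $(n-1)(n+m)-n(n+m-1)=-m$.
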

\begin{proof}
    Let $\{(U_i,\phi_i)\}_{i=1}^K$ be a local trivialization as in the statement of Theorem \ref{asymptoticthm}. For $i\in\{1,\dots ,K\}$, let $V_i\subseteq U_i$ be an open set such that the $V_i$ are mutually disjoint and such that their closures cover $M$. Then each $p^{-1}(V_i)$ is $(1+C\epsilon)$-bi-Lipschitz to $(V_i\times F,g_M|_{V_i}\oplus \epsilon^2g_F)$. This gives, as $\epsilon\to 0^+$ 
    \begin{align*}
        |\Oe|_{\geps}&=\sum_i|p^{-1}(V_i)|_{\geps}\\
        &= \sum_i(1+O(\epsilon))^{n+m}|V_i|_{g_M}\cdot \epsilon^n|F|_{g_F}\\
        &=(1+O(\epsilon))\epsilon^n|M|_{g_M}\cdot|F|_{g_F},
    \end{align*}
    and
    \begin{align*}
        |\partial\Oe|_{\geps}&=\sum_i|\partial(p^{-1}(V_i))\cap \partial \Oe|_{\geps}\\
        &=\sum_i(1+O(\epsilon))^{n+m-1}\Big[|\partial V_i\cap\partial M|_{g_M}\cdot \epsilon^n|F|_{g_F}+|V_i|_{g_M}\cdot\epsilon^{n-1}|\partial F|_{g_F}\Big]\\
        &=(1+O(\epsilon))\epsilon^{n-1}|\partial F|_{g_F}|M|_{g_M}.
    \end{align*}
    Exponentiating and dividing these expressions gives the result. 
\end{proof}

\begin{prop}\label{trivialversion}
    Suppose that $p:\Omega\to M$ is a trivial bundle with a family of metrics equal to $\geps=g_M\oplus \epsilon^2g_F$. Then the conclusion of Theorem \ref{asymptoticthm} holds for the family $\{\Oe\}_{\epsilon\in(0,\epsilon_0)}$.
\end{prop}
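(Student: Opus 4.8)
The plan is to exploit the explicit product structure to reduce everything to Weyl-type lattice-point counting. Since $\Omega = M \times F$ with metric $\geps = g_M \oplus \epsilon^2 g_F$, the Laplace--Beltrami operator decomposes as $\Delta_{\geps} = \Delta_{g_M} \otimes \mathrm{Id} + \epsilon^{-2}\,\mathrm{Id}\otimes\Delta_{g_F}$, and by separation of variables the Neumann eigenvalues of $\Oe$ are exactly the numbers $\mu_k(M) + \epsilon^{-2}\mu_j(F)$ as $(k,j)$ range over $\Zbb_{\geq}^2$ (pairs of indices for an orthonormal eigenbasis of each factor), while the first Dirichlet eigenvalue is $\lambda_1(\Oe) = \mu_1(M) + \epsilon^{-2}\lambda_1(F) = \epsilon^{-2}\lambda_1(F)$ since $M$ is closed so $\mu_1(M)=0$. (If $\partial M \ne \emptyset$, Neumann conditions on $M$ still apply and the same formula holds; Dirichlet on $F$ is what matters, as the boundary of $\Oe$ comes from $\partial F$.)

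With this, I would write
\begin{align*}
    N(\Oe) &= \#\Big\{(k,j)\;\Big|\; \mu_k(M) + \tfrac{1}{\epsilon^2}\mu_j(F) \le \tfrac{1}{\epsilon^2}\lambda_1(F)\Big\}\\
    &= \sum_{j=1}^{N(F)} \#\Big\{k \;\Big|\; \mu_k(M) \le \tfrac{1}{\epsilon^2}\big(\lambda_1(F) - \mu_j(F)\big)\Big\},
\end{align*}
where the outer sum is genuinely finite: the summand is empty once $\mu_j(F) > \lambda_1(F)$, i.e. once $j > N(F)$. Each inner count is $\mathcal{N}_M\big(\epsilon^{-2}(\lambda_1(F)-\mu_j(F))\big)$, the Neumann eigenvalue counting function of $M$ evaluated at a threshold tending to $+\infty$ as $\epsilon \to 0^+$. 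Now apply Weyl's law on the closed manifold $M$: $\mathcal{N}_M(\Lambda) = \frac{\omega_m}{(2\pi)^m}|M|\,\Lambda^{m/2} + o(\Lambda^{m/2})$ as $\Lambda \to \infty$. Substituting $\Lambda_j = \epsilon^{-2}(\lambda_1(F) - \mu_j(F))$ gives
\[
    \#\{k \mid \mu_k(M) \le \Lambda_j\} = \frac{\omega_m}{(2\pi)^m}|M|\,\big(\lambda_1(F)-\mu_j(F)\big)^{m/2}\,\frac{1}{\epsilon^m} + o\Big(\frac{1}{\epsilon^m}\Big),
\]
and summing over the finitely many $j \in \{1,\dots,N(F)\}$ yields the first displayed asymptotic. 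The second displayed asymptotic then follows immediately by combining with Lemma \ref{bdysize}, which converts the $\epsilon^{-m}|M|$ factor into the isoperimetric ratio $|\partial\Oe|^{n+m}/|\Oe|^{n+m-1}$ (up to the stated powers of $|F|$ and $|\partial F|$), the error term $o(\epsilon^{-m})$ transforming accordingly since the isoperimetric ratio is comparable to $\epsilon^{-m}$.

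The main obstacle is a minor bookkeeping point rather than a deep one: one must be careful that the Weyl remainder, which is $o(\Lambda^{m/2})$ \emph{as $\Lambda\to\infty$ for fixed $M$}, translates into a uniform $o(\epsilon^{-m})$ error after summing over the fixed finite set of indices $j$. This is fine because the number of summands $N(F)$ does not depend on $\epsilon$ and each $\lambda_1(F)-\mu_j(F) > 0$ is a fixed positive constant, so $\Lambda_j \to \infty$ and the individual errors are each $o(\epsilon^{-m})$; a finite sum of such is again $o(\epsilon^{-m})$. A secondary point worth a sentence is the degenerate case $N(F) = 0$, which cannot occur: since $\partial F \ne \emptyset$ one has $\mu_1(F) = 0 < \lambda_1(F)$, so $N(F) \ge 1$ and the sum is nonempty. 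I would also remark that if $j = N(F)$ happens to satisfy $\mu_j(F) = \lambda_1(F)$ exactly (a measure-zero coincidence, but allowed), that term contributes $0$ to the leading asymptotic and can be harmlessly included or excluded; the stated formula is consistent either way since $(\lambda_1(F)-\mu_j(F))^{m/2} = 0$ there.
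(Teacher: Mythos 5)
Your argument is essentially the paper's: separation of variables for the product metric, reduction to a finite sum over $j\le N(F)$, Weyl's law for the Neumann counting function of $M$, the same bookkeeping of error terms over the fixed finite index set, and the same appeal to Lemma \ref{bdysize} for the second asymptotic. The one genuine misstep is your treatment of the case $\partial M\neq\emptyset$, which the proposition does allow (and which matters downstream, since the bracketing argument applies this result to pieces $p^{-1}(V_i)$ whose base $V_i$ has boundary). When $\partial M\neq\emptyset$, the boundary of $\Oe=M\times F$ is $(\partial M\times F)\cup(M\times\partial F)$, not just $M\times\partial F$, so the first Dirichlet eigenvalue is $\lambda_1(\Oe)=\lambda_1(M)+\epsilon^{-2}\lambda_1(F)$ with $\lambda_1(M)>0$ the first \emph{Dirichlet} eigenvalue of $M$ (the paper sets $\lambda_1(M):=0$ only when $\partial M=\emptyset$); your parenthetical claim that ``Dirichlet on $F$ is what matters'' and that $\lambda_1(\Oe)=\epsilon^{-2}\lambda_1(F)$ still holds is false in that case. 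The fix is harmless but should be made explicit: with the correct threshold the inner counts become $\mathcal{N}_M\big(\lambda_1(M)+\epsilon^{-2}(\lambda_1(F)-\mu_j(F))\big)$, whose Weyl asymptotics are unchanged because the shift is $O(1)$; but the exclusion of indices $j>N(F)$ is then no longer automatic from $\mu_j(F)>\lambda_1(F)$ --- you must, as the paper does, first take $\epsilon$ small enough that $\epsilon^{-2}\mu_{N(F)+1}(F)>\lambda_1(\Oe)$ before splitting the count into the sum over $j\le N(F)$.
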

\begin{proof}
    Recall that the eigenvalues of a Riemannian product manifolds are the sums of eigenvalues (with multiplicity) of the summands. Thus, $$\lambda_1(\Oe)=\lambda_1(M)+\frac{1}{\epsilon^2}\lambda_1(F),$$ where we take $\lambda_1(M):=0$ if $\partial M=\emptyset$, and for each $j\geq 1$, there exist $\ell,m\geq 1$ such that $$\mu_j(\Oe)=\mu_{\ell}(M)+\frac{1}{\epsilon^2}\mu_k(F).$$ By the definition of $N(F)$, we may suppose for the remainder of the proof that $\epsilon$ is sufficiently small that $\frac{1}{\epsilon^2}\mu_{N(F)+1}(F)>\lambda_1(\Oe)$. This implies that if $\mu_{\ell}(M)+\frac{1}{\epsilon^2}\mu_k(F)\leq \lambda_1(\Oe)$, then $k\leq N(F)$.\\
    \indent Let $\mathcal{N}_M(\lambda)$ denote the number of (Neumann) eigenvalues of $M$ that are less than or equal to $\lambda\geq 0$. Weyl's law (see, for instance, Canzani's notes \cite{canzani}) states that as $\lambda\to+\infty$,
    \begin{equation*}
        \mathcal{N}_M(\lambda)=\frac{\omega_m}{(2\pi)^m}|M|_{g_M}\lambda^{m/2}+o(\lambda^{m/2}).
    \end{equation*}
    By the expression given for the eigenvalues of $\Oe$ and since $\epsilon$ is sufficiently small, therefore, we have 
    \begin{align*}
        N(\Oe)&=\sum_{j=1}^{N(F)}\#\Big\{\ell\mid \mu_{\ell}(M)+\frac{1}{\epsilon^2}\mu_j(F)\leq \lambda_1(M)+\frac{1}{\epsilon^2}\lambda_1(F)\Big\}\\
        &=\sum_{j=1}^{N(F)}\mathcal{N}_M\Big(\lambda_1(M)+\frac{1}{\epsilon^2}\big(\lambda_1(F)-\mu_j(F))\Big)\\
        &=\sum_{j=1}^{N(F)}\frac{\omega_m}{(2\pi)^m}|M|\cdot\big(\lambda_1(F)-\mu_j(F)\big)^{m/2}\cdot\frac{1}{\epsilon^m}+o\Big(\frac{1}{\epsilon^m}\Big)\\
        &=\frac{\omega_m}{(2\pi)^m}\cdot|M|\cdot\Bigg(\sum_{j=1}^{N(F)}\big(\lambda_1(F)-\mu_j(F)\big)^{m/2}\Bigg)\cdot\frac{1}{\epsilon^m}+o\Big(\frac{1}{\epsilon^m}\Big).
    \end{align*}
\end{proof}

\indent We now extend Proposition \ref{trivialversion} to trivial bundles with non-product metrics. Before doing so, we establish some notation.
\begin{nota}
    Suppose that $(\Omega,g_{\Omega})$ is a Riemannian manifold. Let $u:\Omega\to\Cbb$ be a differentiable function. Denote by $\nabla_{g_{\Omega}}u$ the Riemannian gradient of $u$, and let $$|\nabla_{g_\Omega}u|_{g_\Omega}^2:=g_{\Omega}(\nabla_{g_\Omega}u,\nabla_{g_\Omega}u).$$ Let $dV_{g_\Omega}$ denote the Riemannian volume density of $\Omega$. For objects defined by $\Oe$, we often replace the subscript $\Oe$ with simply $\epsilon$; e.g. $\nabla_{\epsilon}:=\nabla_{\Oe}$.
\end{nota}

\begin{prop}\label{trivialwarped}
    Suppose that $p:\Omega\to M$ is a trivial bundle with a family of metrics $\{\geps\}_{\epsilon\in(0,\epsilon_0)}$ satisfying the hypothesis of Theorem \ref{asymptoticthm}. Then the conclusion of Theorem \ref{asymptoticthm} holds for the family $\{\Oe\}_{\epsilon\in(0,\epsilon_0)}$.
\end{prop}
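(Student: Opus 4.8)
The plan is to compare the eigenvalues of $\Oe = (\Omega, \geps)$ with those of the product metric $\geps^0 := g_M \oplus \epsilon^2 g_F$ on the trivial bundle $\Omega = M \times F$, and then invoke Proposition \ref{trivialversion}. The bi-Lipschitz hypothesis is the engine: since the identity map $(\Omega,\geps) \to (\Omega,\geps^0)$ is $(1+C\epsilon)$-bi-Lipschitz, the Rayleigh quotients with respect to the two metrics are comparable up to a multiplicative factor that is $1 + O(\epsilon)$. Concretely, a $(1+C\epsilon)$-bi-Lipschitz identity map distorts the volume density by a factor in $[(1+C\epsilon)^{-(n+m)}, (1+C\epsilon)^{n+m}]$ and the squared gradient norm by a factor in $[(1+C\epsilon)^{-2}, (1+C\epsilon)^2]$ (one must be slightly careful here: the gradient transforms via the inverse metric, so the relevant estimate is that $|\nabla_\epsilon u|_\epsilon^2\, dV_\epsilon$ and $|\nabla_{\epsilon^0} u|_{\epsilon^0}^2\, dV_{\epsilon^0}$ differ by a factor $1+O(\epsilon)$, uniformly in $u$). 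By the min-max characterization, this yields $(1 - C'\epsilon)\, \mu_j(\Oe^0) \leq \mu_j(\Oe) \leq (1 + C'\epsilon)\, \mu_j(\Oe^0)$ for all $j$, and the identical two-sided bound for $\lambda_1$, with $C'$ depending only on $C$, $n$, $m$.

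Next I would feed these comparisons into a counting argument. Write $\delta_\epsilon = C'\epsilon \to 0$. From $\mu_j(\Oe) \leq (1+\delta_\epsilon)\mu_j(\Oe^0)$ and $\lambda_1(\Oe) \geq (1-\delta_\epsilon)\lambda_1(\Oe^0)$ we get
\begin{align*}
    N(\Oe) &= \#\{j \mid \mu_j(\Oe) \leq \lambda_1(\Oe)\} \geq \#\Big\{j \;\Big|\; \mu_j(\Oe^0) \leq \tfrac{1-\delta_\epsilon}{1+\delta_\epsilon}\lambda_1(\Oe^0)\Big\},
\end{align*}
and symmetrically $N(\Oe) \leq \#\{j \mid \mu_j(\Oe^0) \leq \tfrac{1+\delta_\epsilon}{1-\delta_\epsilon}\lambda_1(\Oe^0)\}$. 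Now I run the same computation as in the proof of Proposition \ref{trivialversion}: using $\lambda_1(\Oe^0) = \lambda_1(M) + \epsilon^{-2}\lambda_1(F)$, the product structure of the eigenvalues of $\Oe^0$, and the fact that for $\epsilon$ small only fiber indices $k \leq N(F)$ can contribute, the count becomes $\sum_{j=1}^{N(F)} \mathcal{N}_M\big((1\pm\delta_\epsilon)'\,[\lambda_1(M) + \epsilon^{-2}(\lambda_1(F) - \mu_j(F))] + (\text{error from the }\mu_j(F)\text{ terms scaling})\big)$ — here one should track that the perturbation factor multiplies the whole expression $\mu_\ell(M) + \epsilon^{-2}\mu_j(F)$, so after rearranging, the argument of $\mathcal{N}_M$ is $\epsilon^{-2}(\lambda_1(F) - \mu_j(F)) \cdot (1 + O(\epsilon)) + O(1)$. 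Applying Weyl's law $\mathcal{N}_M(\lambda) = \frac{\omega_m}{(2\pi)^m}|M|\lambda^{m/2} + o(\lambda^{m/2})$ to each term, and noting that $\big(\epsilon^{-2}(\lambda_1(F) - \mu_j(F))(1+O(\epsilon))\big)^{m/2} = \epsilon^{-m}(\lambda_1(F) - \mu_j(F))^{m/2}(1 + O(\epsilon)) = \epsilon^{-m}(\lambda_1(F) - \mu_j(F))^{m/2} + o(\epsilon^{-m})$, the perturbation is absorbed into the $o(\epsilon^{-m})$ error and both the upper and lower bounds converge to the same leading term. This proves the first asymptotic; the second follows from the first together with Lemma \ref{bdysize}, exactly as in the discussion preceding Proposition \ref{trivialversion}.

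The main obstacle I anticipate is making the gradient comparison fully rigorous and uniform: a bi-Lipschitz map need not be differentiable, so one cannot naively pull back gradients pointwise. The clean way around this is to argue at the level of quadratic forms — the Dirichlet energy $u \mapsto \int_\Omega |\nabla u|^2_g \, dV_g$ can be characterized intrinsically (for $u \in \operatorname{Lip}$, via the pointwise Lipschitz constant / metric speed), and a $(1+C\epsilon)$-bi-Lipschitz identity map comparing $\geps$ and $\geps^0$ gives directly that the two Dirichlet energies and the two $L^2$ norms are each comparable up to $(1+C\epsilon)^{\pm(n+m+2)}$, which suffices for min-max. A secondary (but routine) point is bookkeeping the interplay between the multiplicative perturbation $(1\pm\delta_\epsilon)$ and the additive $O(1)$ base-manifold term $\lambda_1(M)$ inside $\mathcal{N}_M$; since $\mathcal{N}_M$ is monotone and $\lambda_1(M) = O(1) = o(\epsilon^{-2})$, this only affects lower-order terms, so no genuine difficulty arises there. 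Everything else is a direct adaptation of the already-written proof of Proposition \ref{trivialversion}.
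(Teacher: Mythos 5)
Your proposal is correct and follows essentially the same route as the paper: compare $\geps$ to the product metric $g_M\oplus\epsilon^2 g_F$ via the bi-Lipschitz hypothesis to get uniform $(1+O(\epsilon))$ two-sided bounds on $\mu_j$ and $\lambda_1$ by min-max, then rerun the counting argument of Proposition \ref{trivialversion} and absorb the perturbation into the $o(\epsilon^{-m})$ error from Weyl's law. Your worry about non-differentiability is moot here since the comparison map is the identity (a diffeomorphism), so the bi-Lipschitz condition just says the two metrics are uniformly comparable as quadratic forms, which is exactly how the paper argues.
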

\begin{proof}
    By the hypothesis of Theorem \ref{asymptoticthm} and since the bundle $p:\Omega\to M$ is trivial, there is a $(1+O(\epsilon))$-bi-Lipschitz diffeomorphism $\phi:\Oe\to (M\times F,g_M\oplus \epsilon^2g_F)$. Note that coordinate expressions for the volume form and gradient of a function with respect to a Riemannian metric depend smoothly on the metric. Thus, for a function $u\in H^1(\Oe)$, the Rayleigh quotient of $u$ on $\Oe$ is approximately equal to the Rayleigh quotient of $(\phi^{-1})^*(u)$ on $(M\times F,g_M\oplus\epsilon^2 g_F)$:
    $$\frac{\int_{\Oe}|\nabla_{\epsilon}u|_{\epsilon}^2dV_{\epsilon}}{\int_{\Oe}|u|^2dV_{\epsilon}}=(1+O(\epsilon))\frac{\int_{M\times F}|\nabla_{g_M\oplus \epsilon^2 g_F}(\phi^{-1})^*(u)|_{g_M\oplus\epsilon^2 g_F}^2dV_{g_M\oplus\epsilon^2 g_F}}{\int_{M\times F}|(\phi^{-1})^*(u)|^2dV_{g_M\oplus \epsilon^2 g_F}},$$ where the $O(\epsilon)$ term is independent of $u$, and a similar expression holds for $v\in H^1(M\times F)$.\\
    For $j\geq 1$, let $\mathcal{V}_j$ denote the span of the first $j$ eigenfunctions of the Laplace-Beltrami operator on $(M\times F,g_M\oplus\epsilon^2 g_F)$. Then 
    \begin{align*}
        \mu_j(\Oe)&\leq \max_{u\in \mathcal{V}_j\setminus\{0\}}\frac{\int_{\Oe}|\nabla_{\geps}\phi^*(u)|_{\geps}^2dV_{\epsilon}}{\int_{\Oe}|\phi^*(u)|^2dV_{\epsilon}} \\
        &=(1+O(\epsilon))\max_{u\in \mathcal{V}_j\setminus \{0\}}\frac{\int_{M\times F}|\nabla_{g_M\oplus \epsilon^2 g_F}u|_{g_M\oplus\epsilon^2 g_F}^2dV_{g_M\oplus\epsilon^2 g_F}}{\int_{M\times F}|u|^2dV_{g_M\oplus \epsilon^2 g_F}}\\
        &=(1+O(\epsilon))\mu_j(M\times F,g_M\oplus\epsilon^2 g_F).
    \end{align*}
    By a similar computation, the opposite inequality holds, and we have $$\mu_j(\Oe)=(1+O(\epsilon))\mu_j(M\times F,g_M\oplus \epsilon^2 g_F)$$ where the $O(\epsilon)$ term is independent of $j$.
    The same equality holds for the Dirichlet eigenvalues as well. In particular, there exists $C_1>0$ such that for all $j\geq 1$, 
    $$(1-C_1\epsilon)\mu_j(M\times F,g_M\oplus\epsilon^2 g_F)\leq \mu_j(\Oe)\leq (1+C_1\epsilon)\mu_j(M\times F,g_M\oplus\epsilon^2 g_F)\;\;\text{and}$$
    $$(1-C_1\epsilon)\lambda_1(M\times F,g_M\oplus\epsilon^2 g_F)\leq \lambda_1(\Oe)\leq (1+C_1\epsilon)\lambda_1(M\times F,g_M\oplus\epsilon^2 g_F).$$ Thus, by a slight modification of the computation in the proof of Proposition \ref{trivialversion}, we get that for $\epsilon$ sufficiently small,
    \begin{align*}
        N(\Oe)&\leq \#\Big\{j:(1-C_1\epsilon)\mu_j(M\times F,g_M\oplus\epsilon^2 g_F)\leq (1+C_1\epsilon)\lambda_1(M\times F,g_M\oplus\epsilon^2 g_F)\Big\}\\
        &=\#\Big\{j:\mu_j(M\times F,g_M\oplus\epsilon^2 g_F)\leq (1+O(\epsilon))\lambda_1(M\times F,g_M\oplus\epsilon^2 g_F)\Big\}\\
        &=\sum_{j=1}^{N(F)}\#\Big\{\ell:\mu_{\ell}(M)+\frac{1}{\epsilon^2}\mu_j(F)\leq \frac{1}{\epsilon^2}\lambda_1(F)+O(1)\Big\}\\
        &=\frac{\omega_m}{(2\pi)^m}\cdot |M|\cdot\Big(\sum_{j=1}^{N(F)}(\lambda_1(F)-\mu_j(F))^{m/2}\Big)\cdot\frac{1}{\epsilon^m}+o\Big(\frac{1}{\epsilon^m}\Big).
    \end{align*}
    The other necessary inequality is computed similarly to yield equality. 
\end{proof}

To complete the proof of Theorem \ref{asymptoticthm}, we apply a Dirichlet-Neumann bracketing argument. To do so, we must first establish a modified version of Proposition \ref{trivialwarped} in which we study a mixed Dirichlet-Neumann eigenvalue problem on $\Oe$. If $\Oe$ is a trivial bundle, denote by $N_{\text{mix}}(\Oe)$ the number of eigenvalues of $\Oe$ satisfying Dirichlet conditions on $(\partial M)\times \text{int}(F)$\footnote{Here $\text{int}(F):=F\setminus\partial F$ denotes the interior of $F$.} and Neumann conditions on $\text{int}(M)\times \partial F$ that do not exceed the first Dirichlet eigenvalue of $\Oe$. We refer to the corresponding mixed eigenvalues by $\lambda_j^{\text{mix}}(\Oe)$. The proof of the following lemma is nearly identical to that of Proposition \ref{trivialwarped}, so we omit it. 
\begin{lem}\label{mixedtrivialwarped}
    Suppose that $M$ has nonempty boundary. Let $f(\epsilon)$ be any function that is $O\big(\frac{1}{\epsilon}\big)$. If $\{\Oe\}_{\epsilon\in(0,\epsilon_0)}$ is a family of trivial bundles over $M$ satisfying the hypothesis of Theorem \ref{asymptoticthm}, then $$\#\Big\{j:\lambda_j^{\text{mix}}(\Oe)\leq \lambda_1(\Oe)+f(\epsilon)\Big)\Big\}=N(\Oe)+o\Big(\frac{1}{\epsilon^m}\Big).$$
\end{lem}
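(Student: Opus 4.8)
The plan is to follow the two-step scheme used for Proposition \ref{trivialwarped}: first establish the statement for the product metric $\geps=g_M\oplus\epsilon^2 g_F$ by separation of variables together with Weyl's law, and then transfer to a general family satisfying the hypothesis of Theorem \ref{asymptoticthm} via the bi-Lipschitz perturbation argument already used in the proof of Proposition \ref{trivialwarped}.

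\textbf{Product case.} For $\geps=g_M\oplus\epsilon^2 g_F$ on the trivial bundle $M\times F$, the mixed problem (Dirichlet on $\partial M\times\text{int}(F)$, Neumann on $\text{int}(M)\times\partial F$) separates: an $L^2$-orthonormal basis of mixed eigenfunctions is given by products $\psi_\ell\otimes\phi_k$, where $\{\psi_\ell\}$ is an orthonormal basis of \emph{Dirichlet} eigenfunctions of $(M,g_M)$ and $\{\phi_k\}$ an orthonormal basis of \emph{Neumann} eigenfunctions of $(F,g_F)$, the corresponding mixed eigenvalue being $\lambda_\ell(M)+\tfrac1{\epsilon^2}\mu_k(F)$; meanwhile $\lambda_1(\Oe)=\lambda_1(M)+\tfrac1{\epsilon^2}\lambda_1(F)$ as in Proposition \ref{trivialversion}. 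Writing $\mathcal N^D_M(\Lambda)$ for the number of Dirichlet eigenvalues of $M$ not exceeding $\Lambda$, we then have
$$\#\big\{j:\lambda_j^{\text{mix}}(\Oe)\leq\lambda_1(\Oe)+f(\epsilon)\big\}=\sum_{k\geq1}\mathcal N^D_M\Big(\lambda_1(M)+\tfrac1{\epsilon^2}\big(\lambda_1(F)-\mu_k(F)\big)+f(\epsilon)\Big).$$
For $\epsilon$ small, only $k$ with $\mu_k(F)\leq\lambda_1(F)$ contribute, since for the rest the argument is negative ($f(\epsilon)=O(\epsilon^{-1})$ is dominated by $\tfrac1{\epsilon^2}(\lambda_1(F)-\mu_k(F))<0$); for the finitely many $k$ with $\mu_k(F)=\lambda_1(F)$ the argument is $O(\epsilon^{-1})$, so each such term is $O(\epsilon^{-m/2})=o(\epsilon^{-m})$ by Weyl's law on $M$. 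For $k$ with $\mu_k(F)<\lambda_1(F)$, Weyl's law for the Dirichlet Laplacian on $M$ — whose leading term $\tfrac{\omega_m}{(2\pi)^m}|M|\,\Lambda^{m/2}$ agrees with the Neumann one — gives
$$\mathcal N^D_M\Big(\lambda_1(M)+\tfrac1{\epsilon^2}\big(\lambda_1(F)-\mu_k(F)\big)+f(\epsilon)\Big)=\frac{\omega_m}{(2\pi)^m}|M|\big(\lambda_1(F)-\mu_k(F)\big)^{m/2}\frac1{\epsilon^m}+o\Big(\frac1{\epsilon^m}\Big),$$
the point being that shifting an argument of size $\asymp\epsilon^{-2}$ by the $O(\epsilon^{-1})$ amount $\lambda_1(M)+f(\epsilon)$ changes $\Lambda^{m/2}$ by only $O(\epsilon^{-(m-1)})=o(\epsilon^{-m})$. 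Summing over $k$ yields exactly $\tfrac{\omega_m}{(2\pi)^m}|M|\big(\sum_{j=1}^{N(F)}(\lambda_1(F)-\mu_j(F))^{m/2}\big)\epsilon^{-m}+o(\epsilon^{-m})$, which is $N(\Oe)+o(\epsilon^{-m})$ by Propositions \ref{trivialversion} and \ref{trivialwarped}.

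\textbf{General metric.} As in the proof of Proposition \ref{trivialwarped}, the trivializing diffeomorphism $\phi:\Oe\to(M\times F,g_M\oplus\epsilon^2 g_F)$ is $(1+O(\epsilon))$-bi-Lipschitz and fiber-preserving, hence maps the decomposition of $\partial\Oe$ into its $\partial M$- and $\partial F$-parts onto that of the product; comparing Rayleigh quotients over the mixed form domains gives $\lambda_j^{\text{mix}}(\Oe)=(1+O(\epsilon))\lambda_j^{\text{mix}}(M\times F,g_M\oplus\epsilon^2 g_F)$ and $\lambda_1(\Oe)=(1+O(\epsilon))\lambda_1(M\times F,g_M\oplus\epsilon^2 g_F)$, with the $O(\epsilon)$ uniform in $j$. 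Since all relevant eigenvalues are $O(\epsilon^{-2})$, the multiplicative $(1+O(\epsilon))$ factors become additive $O(\epsilon^{-1})$ errors on the thresholds, so for $\epsilon$ small the inequality $\lambda_j^{\text{mix}}(\Oe)\leq\lambda_1(\Oe)+f(\epsilon)$ is sandwiched between two inequalities of the form $\lambda_j^{\text{mix}}(M\times F,g_M\oplus\epsilon^2 g_F)\leq\lambda_1(M\times F,g_M\oplus\epsilon^2 g_F)+\tilde f(\epsilon)$ with $\tilde f(\epsilon)=O(\epsilon^{-1})$. Applying the product case to each bound and using $N(\Oe)=N(M\times F,g_M\oplus\epsilon^2 g_F)+o(\epsilon^{-m})$ from Proposition \ref{trivialwarped} closes the argument.

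\textbf{Main obstacle.} The only genuinely delicate point is uniformity: that the $O(\epsilon)$ distortion in the bi-Lipschitz comparison of eigenvalues, and the remainder in Weyl's law, are controlled independently of $j$ and of the $O(\epsilon^{-1})$ perturbation of the cutoff. Everything else is bookkeeping; one also notes that the mixed problem is a self-adjoint problem on a compact manifold with Lipschitz boundary and hence has discrete spectrum, so all the counting functions involved are finite.
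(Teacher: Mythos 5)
Your argument is correct and is exactly the route the paper has in mind: the paper omits this proof, remarking only that it is ``nearly identical'' to that of Proposition \ref{trivialwarped}, and your write-up is precisely that argument carried out for the mixed problem (separation into Dirichlet eigenvalues of $M$ plus $\frac{1}{\epsilon^2}$ times Neumann eigenvalues of $F$, Weyl's law absorbing the $O(1/\epsilon)$ shift of the threshold, then the $(1+O(\epsilon))$-bi-Lipschitz transfer to the non-product metric). The details you supply, including the treatment of possible equality $\mu_k(F)=\lambda_1(F)$ and the uniformity of the distortion in $j$, are consistent with the paper's framework and fill in the omission faithfully.
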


We need only one more lemma before proving Theorem \ref{asymptoticthm}, and this lemma may be of general interest for studying $N(\Omega)$ for other domains and manifolds $\Omega$. The result is basically a domain monotonicity statement for the quantity $N(\Omega)$.

\begin{lem}\label{dommon}
    Let $(\Omega,g_{\Omega})$ be a compact Riemannian manifold with nonempty boundary. Suppose that $U_1,\dots ,U_K\subseteq \Omega$ are mutually disjoint open sets with Lipschitz boundary whose closures cover $\Omega$. Then $$N(\Omega)\leq \sum_{i=1}^{K}N(U_i).$$
\end{lem}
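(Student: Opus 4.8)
The plan is to combine the variational characterizations of the relevant eigenvalues with Neumann domain monotonicity applied to the decomposition $\Omega=\overline{U_1}\cup\dots\cup\overline{U_K}$. Write $N:=N(\Omega)=\#\{j:\mu_j(\Omega)\le\lambda_1(\Omega)\}$. The key point is that Neumann domain monotonicity for a partition says that the $j$th Neumann eigenvalue of $\Omega$ is at least the $j$th element of the sorted multiset union of the Neumann spectra of the $U_i$; equivalently, if we let $n_i(\lambda):=\#\{k:\mu_k(U_i)\le\lambda\}$ denote the Neumann counting function of $U_i$, then $\#\{j:\mu_j(\Omega)\le\lambda\}\le\sum_{i=1}^K n_i(\lambda)$ for every $\lambda\ge 0$. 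This is the standard consequence of the inclusion $H^1(\Omega)\hookrightarrow\bigoplus_i H^1(U_i)$ (restriction of a function to the pieces) together with the fact that this restriction map is injective and the Rayleigh quotient on $\Omega$ equals the sum of the numerator integrals over the $U_i$ divided by the sum of the denominator integrals, so the min-max value over any $j$-dimensional subspace of $H^1(\Omega)$ is bounded below by the corresponding min-max value for the direct sum of the pieces.

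First I would record that inequality precisely: for all $\lambda\ge 0$,
\begin{equation*}
\#\{j:\mu_j(\Omega)\le\lambda\}\ \le\ \sum_{i=1}^K\#\{k:\mu_k(U_i)\le\lambda\}.
\end{equation*}
Next I would handle the Dirichlet side. Here the direction is the opposite and easier: since the $U_i$ are disjoint open subsets of $\Omega$, Dirichlet domain monotonicity gives $\lambda_1(\Omega)\le\lambda_1(U_i)$ for every $i$ (extending a first Dirichlet eigenfunction of $U_i$ by zero yields an admissible test function for $\Omega$). Now apply the displayed inequality at $\lambda=\lambda_1(\Omega)$:
\begin{equation*}
N(\Omega)=\#\{j:\mu_j(\Omega)\le\lambda_1(\Omega)\}\ \le\ \sum_{i=1}^K\#\{k:\mu_k(U_i)\le\lambda_1(\Omega)\}\ \le\ \sum_{i=1}^K\#\{k:\mu_k(U_i)\le\lambda_1(U_i)\}=\sum_{i=1}^K N(U_i),
\end{equation*}
where the last inequality uses $\lambda_1(\Omega)\le\lambda_1(U_i)$ to enlarge each counting threshold.

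The main obstacle — really the only point needing care — is justifying the Neumann domain monotonicity inequality in the stated generality, i.e. for mutually disjoint open sets with merely Lipschitz boundary whose closures cover $\Omega$, without assuming the union of the boundaries has measure zero or that the $U_i$ exhaust $\Omega$ up to a null set. One must check that $\bigcup_i\partial U_i$ has $n$-dimensional measure zero (automatic for Lipschitz boundaries, since a Lipschitz hypersurface is $n$-null), so that $\Omega\setminus\bigcup_i U_i$ is null and integrals over $\Omega$ split as sums over the $U_i$; and one must check that restriction $H^1(\Omega)\to\bigoplus_i H^1(U_i)$ is well-defined and norm-nonincreasing, which is immediate. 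Given this, the min-max inequality follows by a routine dimension-counting argument: the image of a $j$-dimensional subspace under an injective linear map is $j$-dimensional, and the Rayleigh quotient only goes up when passing from $\Omega$ to the decoupled problem because the constraint set shrinks. This is the same mechanism already invoked in the proof of Theorem \ref{polygonthm} (the inequality ``$N(P)\le\sum_i\#\{j:\mu_j(Q_i)\le\lambda_1(P)\}$''), so I would phrase it as a clean standalone lemma and cite the earlier usage. I expect the write-up to be short: essentially two applications of domain monotonicity, one in each direction, glued at $\lambda=\lambda_1(\Omega)$.
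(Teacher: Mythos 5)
Your proposal is correct and follows essentially the same route as the paper: Neumann domain monotonicity for the partition (your counting-function inequality is just a restatement of the interlacing $\nu_j\leq\mu_j(\Omega)$ used in the paper) combined with Dirichlet domain monotonicity $\lambda_1(\Omega)\leq\lambda_1(U_i)$, glued at the threshold $\lambda=\lambda_1(\Omega)$. The extra care you take in justifying the Neumann bracketing step for Lipschitz pieces is fine but is simply absorbed into the phrase ``Neumann domain monotonicity'' in the paper's argument.
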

\begin{proof}
    Let $0=\nu_1=\dots =\nu_K<\nu_{K+1}\leq \dots $ be an ordering of the union of the Neumann eigenvalues of the sets $U_i$ (with multiplicity). By Neumann domain monotonicity, we have $\nu_j\leq \mu_j(\Omega)$ for all $j\geq 1$. By Dirichlet domain monotonicity, $\lambda_1(\Omega)\leq \lambda_1(U_i)$ for all $i$. Hence,
    \begin{align*}
        N(\Omega)&\leq \#\{j:\nu_j\leq \lambda_1(\Omega)\}\\
        &=\sum_{i=1}^K\#\{j:\mu_j(U_i)\leq \lambda_1(\Omega)\}\\
        &\leq \sum_{i=1}^K\#\{j:\mu_j(U_i)\leq \lambda_1(U_i)\}\\
        &=\sum_{i=1}^KN(U_i).
    \end{align*}
\end{proof}

\begin{proof}[Proof of Theorem \ref{asymptoticthm}]
    Let $\{U_i\}_{i=1}^K$ be the local trivialization as in the statement of the theorem. For each $i$, let $V_i\subseteq U_i$ be an open set with Lipschitz boundary such that the closures of the $V_i$ cover $M$. Then the restriction $p|_{p^{-1}(V_i)}:V_i\to U_i$ is a trivial bundle, so we may apply Proposition \ref{trivialwarped} to these sets. By Lemma \ref{dommon}, we have 
    \begin{align*}
        N(\Oe)&\leq \sum_{i=1}^K N(p^{-1}(V_i))\\
        &=\sum_{i=1}^K \Bigg(\frac{\omega_m}{(2\pi)^m}\cdot|p^{-1}(V_i)|\cdot\Bigg(\sum_{j=1}^{N(F)}\big(\lambda_1(F)-\mu_j(F)\big)^{m/2}\Bigg)\cdot\frac{1}{\epsilon^m}+o\Big(\frac{1}{\epsilon}\Big)\Bigg)\\
        &=\frac{\omega_m}{(2\pi)^m}\cdot|M|\cdot\Bigg(\sum_{j=1}^{N(F)}\big(\lambda_1(F)-\mu_j(F)\big)^{m/2}\Bigg)\cdot\frac{1}{\epsilon^m}+o\Big(\frac{1}{\epsilon}\Big).
    \end{align*}
    To prove the opposite inequality, we first compare the first Dirichlet eigenvalues of $\Oe$ and the $p^{-1}(V_i)$. By the argument given in the proof of Proposition \ref{trivialwarped}, $$\lambda_1(p^{-1}(V_i))=(1+O(\epsilon))\lambda_1(V_i\times F,(g_M|_{V_i})\oplus \epsilon^2 g_F)=\frac{1}{\epsilon^2}\lambda_1(F)+O\Big(\frac{1}{\epsilon}\Big).$$
    By Dirichlet domain monotonicity, it follows that $$\lambda_1(\Oe)\leq \lambda_1(p^{-1}(V_i)).$$\\
    \indent Note that the volume form for the metric $(g_M|_{V_i})\oplus \epsilon^2 g_F$ is given by $\epsilon^ndV_{g_M}dV_{g_F}$ and that the norm of the gradient squared of a differentiable function $v$ on $M\times F$ with this metric is given by $$|\nabla_{(g_M|_{V_i})\oplus \epsilon^2g_F}v|^2=|\nabla_{g_M}v|_{g_M}^2+\frac{1}{\epsilon^2}|\nabla_{g_F} v|_{g_F}^2.$$ Let $u$ be a smooth function on $\Oe$ vanishing on $\partial\Oe$. Then 
    \begin{align*}
        \int_{\Oe}|\nabla_{\epsilon}u|_{\epsilon}^2dV_{\epsilon}&=(1+O(\epsilon))\sum_{i=1}^K\int_{V_i}\int_F \Big(|\nabla_{g_M}u|_{g_M}^2+\frac{1}{\epsilon^2}|\nabla_{g_F} u|_{g_F}^2\Big)\epsilon^ndV_{g_F}dV_{g_M}\\
        &\geq (1+O(\epsilon))\sum_{i=1}^K\int_{V_i}\frac{1}{\epsilon^2}\Big(\int_F|\nabla_{g_F} u|_{g_F}^2dV_{g_F}\Big)\epsilon^ndV_{g_M}\\
        &\geq (1+O(\epsilon))\frac{1}{\epsilon^2}\lambda_1(F)\sum_{i=1}^K\int_{V_i}\int_F|u|^2\epsilon^ndV_{g_F}dV_{g_M}\\
        &=(1+O(\epsilon))\frac{1}{\epsilon^2}\lambda_1(F)\int_{\Oe}|u|^2dV_{\epsilon}.
    \end{align*} 
    Since the $O(\epsilon)$ term is independent of $u$, it follows from the variational characterization of the Dirichlet eigenvalue problem that $$\lambda_1(\Oe)\geq \frac{1}{\epsilon^2}\lambda_1(F)+O\Big(\frac{1}{\epsilon}\Big).$$ Combining this with the last inequality and the expression for $\lambda_1(p^{-1}(V_i))$ gives $$\lambda_1(\Oe)=(1+O(\epsilon))\lambda_1(p^{-1}(V_i),(g_M|_{V_i})\oplus\epsilon^2 g_F)=(1+O(\epsilon))\frac{\lambda_1(F)}{\epsilon^2}$$ for all $i\in\{1,\dots ,K\}$. Moreover, for each $i$, the same argument used in the proof of Proposition \ref{trivialwarped} implies that $$\lambda_1(p^{-1}(V_i))=(1+O(\epsilon))\frac{\lambda_1(F)}{\epsilon^2}=\lambda_1(\Oe)+O\Big(\frac{1}{\epsilon}\Big).$$  Since $K$ is finite, we may take the $O(\epsilon)$ terms to be independent of $i$. It thus follows from Dirichlet domain monotonicity, Proposition \ref{trivialwarped}, and Lemma \ref{mixedtrivialwarped} that 
    \begin{align*}
        N(\Oe)&\geq \sum_{i=1}^K\#\Big\{j:\lambda_j^{\text{mix}}(V_i)\leq \lambda_1(\Oe)\Big\}\\
        &=\sum_{i=1}^K\#\Big\{j:\lambda_j^{\text{mix}}(V_i)\leq \lambda_1(V_i)+O\Big(\frac1{\epsilon}\Big)\Big\}\\
        &=\sum_{i=1}^{K}N(V_i)+o\Big(\frac{1}{\epsilon^m}\Big)\\
        &=\frac{\omega_m}{(2\pi)^m}\cdot|M|\cdot\Bigg(\sum_{j=1}^{N(F)}\big(\lambda_1(F)-\mu_j(F)\big)^{m/2}\Bigg)\cdot\frac{1}{\epsilon^m}+o\Big(\frac{1}{\epsilon^m}\Big).
    \end{align*}
    Combining this with the upper bound on $N(\Oe)$ above completes the proof. 
\end{proof}

Finally, we show that Theorem \ref{tubenbhd} is a special case of Theorem \ref{asymptoticthm}.

\begin{proof}[Proof of Theorem \ref{tubenbhd}]
    The tubular neighborhood theorem (see, for instance, Theorem 5.25 of Lee \cite{lee}) states that for $\epsilon>0$ sufficiently small, the set of points a distance $\epsilon$ to $M$ forms a fiber bundle over $M$ with fibers diffeomorphic to the ball $B^n$ (in particular, each point in $\Oe$ has a unique point on $M$ to which it is nearest). Fix $x_0\in M$. There exists a neighborhood $U\subseteq M$ of $x_0$ on which there exist functions $h_1,\dots ,h_n:U\to T^{\perp}M$ (the orthogonal complement of $TM\subseteq TX|_M$) such that at each $x\in U$, $(h_1(x),\dots ,h_n(x))$ is an orthonormal basis for $T^{\perp}M$. We thus get a trivializing diffeomorphism $$\phi:U\times B^n\to \Oe,\;\;(x,(y_1,\dots ,y_n))\mapsto \exp_x(\epsilon y_1h_1(x),\dots ,\epsilon y_nh_n(x)),$$ where $\exp_x:T_xX\to X$ is the exponential map. Using elementary facts about exponential maps, we see that $\Oe$ satisfies the hypothesis of Theorem \ref{asymptoticthm}.
\end{proof}

\section*{Appendix}
Here we prove Theorem \ref{funanoestimate}. Since Funano's theorem \cite{funano} proves the same estimate for bounded convex domains with piecewise smooth boundaries, we need only apply a domain approximation argument. \\
\indent Let $S^{n-1}\subseteq \Rbb^n$ denote the unit $(n-1)$-sphere. Let $\epsilon>0$. Let $L,K:S^{n-1}\to(0,\infty)$ be continuous functions with $K(p)>L(p)$ for all $p\in S^{n-1}$ and such that $K(p)-L(p)<\epsilon$. Define 
$$f(L,n,\epsilon):=\begin{cases}
    \displaystyle\sup_{p\in S^{n-1}}\ln\Big(\frac{L(p)+\epsilon}{L(p)}\Big)\;\;&\text{if}\;\;n=2\\ \\\displaystyle\sup_{p\in S^{n-1}}\frac{1}{n-2}\Big(L(p)^{2-n}-(L(p)+\epsilon)^{2-n}\Big)\;\;&\text{if}\;\;n>2
\end{cases}$$
and $$g(L,n,\epsilon):=\sup_{p\in S^{n-1}}\frac{1}{n}\Big((L(p)+\epsilon)^n-L(p)^n\Big).$$ Note that as $\epsilon\to 0^+$ with $L$ and $n$ fixed, both $f$ and $g$ tend toward $0$. \\
\indent Let $$\Omega_{L,K}:=\Big\{rp\in \Rbb^n\mid p\in S^{n-1}, L(p)<r<K(p)\Big\},$$ and let $\nu_1(\Omega_{L,K})$ denote the first eigenvalue of the Laplacian on $\Omega_{L,K}$ with Dirichlet boundary conditions on $L(S^{n-1})$ and Neumann boundary conditions on $K(S^{n-1})$. 
\begin{lem}\label{evalineq2}
    $\nu_1(\Omega_{L,K})\geq \frac{1}{f(L,n,\epsilon)g(L,n,\epsilon)}$. 
\end{lem}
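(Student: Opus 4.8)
The plan is to bound the mixed eigenvalue $\nu_1(\Omega_{L,K})$ from below using its variational characterization together with a one-dimensional Poincaré-type inequality along radial segments. Working in spherical coordinates $x = rp$ with $p \in S^{n-1}$ and $L(p) < r < K(p)$, the Dirichlet condition on $L(S^{n-1})$ means any admissible test function $u$ vanishes at the inner boundary. For each fixed $p$, I would write $u(rp) = \int_{L(p)}^r \partial_\rho u(\rho p)\, d\rho$ and apply Cauchy-Schwarz with a weight, in the style of the computation already used in the proof of Lemma \ref{neumannconverge}. Choosing the weight $\rho^{n-1}$ (so that the volume element $r^{n-1}\,dr\,dp$ appears naturally) gives
\[
|u(rp)|^2 \le \Bigg(\int_{L(p)}^{K(p)} \rho^{1-n}\, d\rho\Bigg)\Bigg(\int_{L(p)}^{K(p)} |\partial_\rho u(\rho p)|^2 \rho^{n-1}\, d\rho\Bigg).
\]
The first factor is exactly $f(L,n,\epsilon)$ evaluated along the ray through $p$ — the two cases $n=2$ and $n>2$ in the definition of $f$ come precisely from integrating $\rho^{1-n}$.

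Next I would integrate this pointwise bound against $r^{n-1}\,dr$ over $r \in (L(p), K(p))$. The left side becomes $\int_{L(p)}^{K(p)} |u(rp)|^2 r^{n-1}\, dr$, while on the right the $r$-integral of $r^{n-1}$ produces $\frac{1}{n}(K(p)^n - L(p)^n) \le g(L,n,\epsilon)$, and the inner Dirichlet-energy integral is independent of $r$. Thus
\[
\int_{L(p)}^{K(p)} |u(rp)|^2 r^{n-1}\, dr \le f(L,n,\epsilon)\, g(L,n,\epsilon) \int_{L(p)}^{K(p)} |\partial_\rho u(\rho p)|^2 \rho^{n-1}\, d\rho.
\]
Integrating over $p \in S^{n-1}$, the right-hand side is bounded by $f(L,n,\epsilon)g(L,n,\epsilon)\int_{\Omega_{L,K}} |\nabla u|^2$ since $|\partial_r u|^2 \le |\nabla u|^2$, and the left-hand side is $\int_{\Omega_{L,K}} |u|^2$. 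Rearranging the Rayleigh quotient and taking the infimum over admissible $u$ (first over $C^1$ functions vanishing on the inner boundary, then extending by density, exactly as in Lemma \ref{neumannconverge}) yields $\nu_1(\Omega_{L,K}) \ge \big(f(L,n,\epsilon)g(L,n,\epsilon)\big)^{-1}$.

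The main technical point to be careful about is that $f$ and $g$ are defined as suprema over $p \in S^{n-1}$, so one must check that replacing the $p$-dependent quantities $\int_{L(p)}^{K(p)} \rho^{1-n}d\rho$ and $\frac1n(K(p)^n - L(p)^n)$ by their suprema is legitimate inside the $p$-integration — this is immediate since both appear as multiplicative constants on the right-hand side and the suprema only make the bound larger. A secondary point is the density/regularity argument allowing one to pass from smooth test functions (for which the fundamental theorem of calculus applies along rays) to the full form domain $H^1$ with vanishing trace on $L(S^{n-1})$; this is routine and parallels the argument already given for $\nu_1(P_k)$ above. I do not anticipate a genuine obstacle here — the lemma is essentially a weighted one-dimensional Poincaré inequality fibered over the sphere.
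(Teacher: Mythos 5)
Your proposal is correct and follows essentially the same route as the paper: a weighted Cauchy--Schwarz inequality along radial rays (with weight $s^{(n-1)/2}$), bounding the resulting factors $\int_{L(p)}^{K(p)} s^{1-n}\,ds$ and $\frac1n\big(K(p)^n-L(p)^n\big)$ by $f(L,n,\epsilon)$ and $g(L,n,\epsilon)$ via $K(p)<L(p)+\epsilon$, then integrating in $r$ and $p$ and invoking the variational characterization of $\nu_1(\Omega_{L,K})$. No substantive differences.
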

\begin{proof}
    Let $u$ be a $C^1$ function on $\Omega_{L,K}$ that vanishes on $L(S^{n-1})$. Then, using polar coordinates $r\in(0,\infty)$ and $p\in S^{n-1}$ on $\Rbb^n$, 
    \begin{align*}
        |u(r,p)|^2&=\Big|\int_{L(p)}^ru_r(s,p)ds\Big|^2\\
        &=\Big|\int_{L(p)}^r\frac{1}{s^{(n-1)/2}}u_r(s,p)s^{(n-1)/2}ds\Big|^2\\
        &\leq f(L,n,\epsilon)\int_{L(p)}^{K(p)}|\nabla u(s,p)|^2s^{n-1}ds.
    \end{align*}
    Integrating both sides of this inequality gives
    \begin{align*}
        \int_{\Omega}|u(r,p)|^2&\leq \int_{S^{n-1}}\int_{L(p)}^{K(p)}\Bigg(f(L,n,\epsilon)\int_{L(p)}^{K(p)}|\nabla u(s,p)|^2s^{n-1}ds\Bigg)r^{n-1}drdp\\
        &=g(L,n,\epsilon)f(L,n,\epsilon)\int_{\Omega}|\nabla u|^2.
    \end{align*} 
    Since $$\nu_1(\Omega_{L,K})=\inf_{\substack{u\in C^1(\Omega_{L,K})\setminus\{0\}\\ \\u|_{L(S^{n-1})\equiv 0}}}\frac{\int_{\Omega_{L,K}}|\nabla u|^2}{\int_{\Omega_{L,K}}|u|^2},$$ the result follows. 
\end{proof}
Theorem \ref{funanoestimate} is a combination of Proposition 2.3 of \cite{arrieta}, Theorem 1.1 of \cite{funano}, and Lemma \ref{evalineq2}.
\begin{proof}[Proof of Theorem \ref{funanoestimate}]
    Let $\Omega\subseteq\Omega'\subseteq \Rbb^n$ be bounded convex domains. By applying an isometry if necessary, we may suppose without loss of generality that the origin is contained in the interior of $\Omega$. Then the maps $L^{-1}:\partial\Omega\to S^{n-1}$ and $(L')^{-1}:\partial\Omega'\to S^{n-1}$ defined by $L^{-1}(x)=x/|x|$ and $(L')^{-1}(x)=x/|x|$ have continuous inverses $L:S^{n-1}\to \partial\Omega$ and $L':S^{n-1}\to\partial\Omega'$, respectively. \\
    \indent By standard results in convex geometry (see, e.g., Theorem 3.4.1 of \cite{convexbook}), there exists a sequence $\{\Omega_k\}$ (resp. $\{\Omega_k'\}$) of convex domains with smooth boundaries such that $\Omega\subseteq \Omega_k$ (resp. $\Omega'\subseteq\Omega_k'$) for all $k$ and such that for any $x\in \Omega_k$ (resp. $x\in \Omega_k'$), the distance from $x$ to $\Omega$ (resp. $\Omega'$) is less than $1/k$. Rescaling $\Omega_k'$ if necessary, we may also assume that for each $k$, $\Omega_k\subseteq \Omega_k'$. For each $k$, define $K_k:S^{n-1}\to\partial\Omega_k$ (resp. $K_k':S^{n-1}\to\partial\Omega'$) similarly to $L$ (resp. $L'$). 
    By Lemma \ref{evalineq2}, the first mixed eigenvalues $\lambda_1(\Omega_k\setminus \Omega)$ and $\lambda_1(\Omega_k'\setminus\Omega')$ both tend toward $\infty$ as $k\to\infty$. As a result, Proposition 2.3 of \cite{arrieta} implies that for each $j$, $\mu_j(\Omega_k)\to\mu_j(\Omega)$ and $\mu_j(\Omega_k')\to\mu_j(\Omega')$ as $k\to\infty$. Let $E=92^2$ be as in FUnano's theorem. Since $\Omega_k$ and $\Omega_k'$ both have smooth boundary and $\Omega_k\subseteq \Omega_k'$ for each $k$, Theorem 1.1 of \cite{funano} implies that $$\mu_j(\Omega')=\lim_{k\to\infty}\mu_j(\Omega_k')\leq E\cdot n^2\cdot\lim_{k\to\infty}\mu_j(\Omega_k)=E\cdot n^2\cdot \mu_j(\Omega).$$
\end{proof}



\begin{thebibliography}{99}

\bibitem[AC04]{arrieta} J. Arrieta and A. Carvalho. ``Spectral convergence and nonlinear dynamics of reaction–diffusion equations under perturbations of the domain." Journal of Differential Equations 199.1 (2004): 143-178.

\bibitem[AL97]{ashbaugh} M. Ashbaugh, and H. Levine. ``Inequalities for Dirichlet and Neumann eigenvalues of the laplacian for domains on spheres." Journées Équations aux dérivées partielles (1997): 1-15.

\bibitem[BF10]{freitas2} D. Borisov and P. Freitas. ``Asymptotics of Dirichlet eigenvalues and eigenfunctions of the Laplacian on thin domains in $\Rbb^d$." Journal of Functional Analysis 258.3 (2010): 893-912.

\bibitem[C13]{canzani} Y. Canzani. ``Analysis on manifolds via the Laplacian." Lecture Notes available at: http://www. math. harvard. edu/canzani/docs/Laplacian. pdf (2013): 41-44.

\bibitem[CMS19]{coxetal} G. Cox, S. MacLachlan, and L. Steeves. ``Isoperimetric relations between Dirichlet and Neumann eigenvalues." arXiv preprint arXiv:1906.10061 (2019).

\bibitem[EH87]{evans} W. Evans and D. Harris. ``Sobolev embeddings for generalized ridged domains." Proceedings of the London Mathematical Society 3.1 (1987): 141-175.

\bibitem[F05]{filonov} N. Filonov. ``On an inequality between Dirichlet and Neumann eigenvalues for the Laplace operator." St. Petersburg Mathematical Journal 16.2 (2005): 413-416.

\bibitem[FHL24]{franketal} R. Frank, B. Helffer, and A. Laptev. ``Inequalities between Dirichlet and Neumann eigenvalues on Carnot groups." arXiv preprint arXiv:2411.11168 (2024).

\bibitem[F24]{freitas} P. Freitas. ``On the (growing) gap between Dirichlet and Neumann eigenvalues." arXiv preprint arXiv:2405.18079 (2024).

\bibitem[FK23]{freitaskennedy} P. Freitas and J. Kennedy. ``On domain monotonicity of Neumann eigenvalues of convex domains." arXiv preprint arXiv:2307.06593 (2023).

\bibitem[F91]{fried} L. Friedlander. ``Some inequalities between Dirichlet and Neumann eigenvalues." Archive for rational mechanics and analysis 116 (1991): 153-160.

\bibitem[FS09]{fs} L. Friedlander and M. Solomyak. ``On the spectrum of the Dirichlet Laplacian in a narrow strip." Israel journal of mathematics 170 (2009): 337-354.

\bibitem[F87]{fukaya} K. Fukaya. ``Collapsing of Riemannian manifolds and eigenvalues of Laplace operator." Inventiones mathematicae 87.3 (1987): 517-547.

\bibitem[F23]{funano} K. Funano. ``A note on domain monotonicity for the Neumann eigenvalues of the Laplacian." Illinois Journal of Mathematics 67.4 (2023): 677-686.

\bibitem[G08]{grieser} D. Grieser. ``Thin tubes in mathematical physics, global analysis and spectral geometry." arXiv preprint arXiv:0802.2687 (2008).

\bibitem[HJ11]{luc1} L. Hillairet and C. Judge. ``Spectral simplicity and asymptotic separation of variables." Communications in mathematical physics 302 (2011): 291-344.

\bibitem[HMZ24]{huaetal} B. Hua, F. M\"unch, and H. Zhang. ``Inequalities between Dirichlet and Neumann Eigenvalues on Surfaces." arXiv preprint arXiv:2412.19480 (2024).

\bibitem[HW20]{lecturesconvexgeo} D. Hug and W. Weil. \textit{Lectures on convex geometry}. Vol. 286. Cham: Springer, 2020.

\bibitem[L18]{lee} J. Lee. \textit{Introduction to Riemannian manifolds}. Vol. 2. Cham: Springer, 2018.

\bibitem[LW86]{levinewein} H. Levine and H. Weinberger. ``Inequalities between Dirichlet and Neumann eigenvalues." Archive for Rational Mechanics and Analysis 94 (1986): 193-208.

\bibitem[P55]{payne} L. Payne. ``Inequalities for eigenvalues of membranes and plates." Journal of Rational Mechanics and Analysis 4 (1955): 517-529.

\bibitem[PW60]{payneweinberger} L. Payne and H. Weinberger. ``An optimal Poincaré inequality for convex domains." Archive for Rational Mechanics and Analysis 5.1 (1960): 286-292.

\bibitem[P61]{polyatiling} G. P\'olya. ``On the Eigenvalues of Vibrating Membranes†: (In Memoriam Hermann Weyl)." Proceedings of the London Mathematical Society 3.1 (1961): 419-433.

\bibitem[P52]{polya} G. P\'olya. ``Remarks on the foregoing paper." Journal of Mathematics and Physics 31.1-4 (1952): 55-57.

\bibitem[RT75]{rauchtaylor} J. Rauch and M. Taylor. ``Potential and scattering theory on wildly perturbed domains." Journal of Functional Analysis 18.1 (1975): 27-59.

\bibitem[R23]{rohleder} J. Rohleder. ``Inequalities between Neumann and Dirichlet Laplacian eigenvalues on planar domains." arXiv preprint arXiv:2306.12922 (2023).

\bibitem[S13]{convexbook} R. Schneider. \textit{Convex bodies: the Brunn–Minkowski theory}. Vol. 151. Cambridge university press, 2013.






\end{thebibliography}
\end{document}